\newtheorem{lem}{Lemma}[section]
\newtheorem{cor}[lem]{Corollary}
\newtheorem{prop}[lem]{Proposition}
\newtheorem{thm}[lem]{Theorem}
\newtheorem{Defn}[lem]{Definition}
\newtheorem{Ex}[lem]{Example}
\newtheorem{Question}[lem]{Question}
\newtheorem{Property}[lem]{Property}
\newtheorem{Properties}[lem]{Properties}
\newtheorem{Discussion}[lem]{Remark}
\newtheorem{Construction}[lem]{Construction}
\newtheorem{Fact}[lem]{Fact}
\newtheorem{Notation}[lem]{Notation}
\newtheorem{Subprops}{}[lem]
\newtheorem{Para}[lem]{}
\newenvironment{defn}{\begin{Defn}\rm}{\end{Defn}}
\newenvironment{ex}{\begin{Ex}\rm}{\end{Ex}}
\newenvironment{para}{\begin{Para}\rm}{\end{Para}}
\newenvironment{disc}{\begin{Discussion}\rm}{\end{Discussion}}
\newenvironment{notation}{\begin{Notation}\rm}{\end{Notation}}
\newenvironment{fact}{\begin{Fact}\rm}{\end{Fact}}
\newtheorem{intthm}{Theorem}
\newcommand{\comp}[1]{\widehat{#1}}
\newcommand{\pd}{\mathrm{pd}}
\newcommand{\ext}{\mathrm{Ext}}
\newcommand{\id}{\mathrm{id}}
\newcommand{\HH}{\mathrm{H}}
\newcommand{\Hom}{\mathrm{Hom}}	
\newcommand{\coker}{\mathrm{Coker}}
\newcommand{\tor}{\mathrm{Tor}}
\newcommand{\ol}{\overline}
\newcommand{\im}{\operatorname{Im}}
\newcommand{\xra}{\xrightarrow}
\newcommand{\xla}{\xleftarrow}
\newcommand{\shift}{\mathsf{\Sigma}}
\newcommand{\cone}{\operatorname{Cone}}
\newcommand{\Ker}{\operatorname{Ker}}
\newcommand{\wti}{\widetilde}
\newcommand{\cat}[1]{\mathcal{#1}}
\newcommand{\catx}{\cat{X}}
\newcommand{\caty}{\cat{Y}}
\newcommand{\catm}{\cat{M}}
\newcommand{\catv}{\cat{V}}
\newcommand{\catw}{\cat{W}}
\newcommand{\catp}{\cat{P}}
\newcommand{\cati}{\cat{I}}
\newcommand{\cata}{\cat{A}}
\newcommand{\catab}{\mathcal{A}b}
\newcommand{\catgi}{\cat{GI}}
\newcommand{\catgp}{\cat{GP}}
\newcommand{\catgic}{\cat{GI}_C}
\newcommand{\catgpc}{\cat{GP}_C}
\newcommand{\catac}{\cat{A}_C}
\newcommand{\catbc}{\cat{B}_C}
\newcommand{\catic}{\cat{I}_C}
\newcommand{\catpc}{\cat{P}_C}
\newcommand{\opg}{\cat{G}}
\newcommand{\catpd}[1]{\cat{#1}\text{-}\pd}
\newcommand{\xpd}{\catpd{X}}
\newcommand{\wpd}{\catpd{W}}
\newcommand{\gpd}{\catpd{GP}}
\newcommand{\gid}{\catid{GI}}
\newcommand{\catid}[1]{\cat{#1}\text{-}\id}
\newcommand{\yid}{\catid{Y}}
\newcommand{\vid}{\catid{V}}
\newcommand{\aext}{\ext_{\cata}}
\newcommand{\ahom}{\Hom_{\cata}}
\newcommand{\xaext}{\ext_{\catx\!\cata}}
\newcommand{\gwaext}{\ext_{\gw\cata}}
\newcommand{\ayext}{\ext_{\cata\caty}}
\newcommand{\agvext}{\ext_{\cata\gv}}
\newcommand{\avext}{\ext_{\cata\catv}}
\newcommand{\awext}{\ext_{\cata\catw}}
\newcommand{\waext}{\ext_{\catw \cata}}
\newcommand{\pmext}{\ext_{\catp}}
\newcommand{\miext}{\ext_{\cati}}
\newcommand{\wacomp}{\varepsilon_{\catw \cata}}
\newcommand{\avcomp}{\varepsilon_{\cata \catv}}
\newcommand{\xwacomp}{\vartheta_{\catx \catw \cata}}
\newcommand{\gwwacomp}{\vartheta_{\gw \catw \cata}}
\newcommand{\ayvcomp}{\vartheta_{\cata \caty \catv}}
\newcommand{\agvvcomp}{\vartheta_{\cata \gv \catv}}
\newcommand{\finrescat}[1]{\operatorname{res}\comp{\cat{#1}}}
\newcommand{\proprescat}[1]{\operatorname{res}\wti{\cat{#1}}}
\newcommand{\finrescatx}{\finrescat{X}}
\newcommand{\finrescatw}{\finrescat{W}}
\newcommand{\proprescatx}{\proprescat{X}}
\newcommand{\proprescatw}{\proprescat{W}}
\newcommand{\fincorescat}[1]{\operatorname{cores}\comp{\cat{#1}}}
\newcommand{\propcorescat}[1]{\operatorname{cores}\wti{\cat{#1}}}
\newcommand{\fincorescaty}{\fincorescat{Y}}
\newcommand{\fincorescatv}{\fincorescat{V}}
\newcommand{\fincorescatw}{\fincorescat{W}}
\newcommand{\propcorescaty}{\propcorescat{Y}}
\newcommand{\propcorescatv}{\propcorescat{V}}
\newcommand{\propcorescatw}{\propcorescat{W}}
\newcommand{\cpltrescat}[1]{\operatorname{res}\ol{\cat{#1}}}
\newcommand{\cpltcorescat}[1]{\operatorname{cores}\ol{\cat{#1}}}
\newcommand{\cpltrescatw}{\cpltrescat{W}}
\newcommand{\cpltcorescatw}{\cpltcorescat{W}}
\newcommand{\cpltcorescatv}{\cpltcorescat{V}}
\newcommand{\watate}{\smash{\comp{\ext}}_{\catw \cata}}
\newcommand{\awtate}{\smash{\comp{\ext}}_{\cata \catw}}
\newcommand{\avtate}{\smash{\comp{\ext}}_{\cata \catv}}
\newcommand{\pmtate}{\smash{\comp{\ext}}_{\catp}}
\newcommand{\mitate}{\smash{\comp{\ext}}_{\cati}}
\newcommand{\gw}{\opg(\catw)}
\newcommand{\gv}{\opg(\catv)}
\newcommand{\gwpd}{\catpd{G(W)}}
\newcommand{\gvid}{\catid{G(V)}}
\renewcommand{\geq}{\geqslant}
\renewcommand{\leq}{\leqslant}
\renewcommand{\ker}{\Ker}
\renewcommand{\hom}{\Hom}
\numberwithin{equation}{lem}
\begin{document}

\bibliographystyle{amsplain}

\author{Sean Sather-Wagstaff}

\address{Sean Sather-Wagstaff, Department of Mathematics \#2750,
North Dakota State University,
PO Box 6050,
Fargo, ND 58108-6050, 
USA}
\email{Sean.Sather-Wagstaff@ndsu.edu}
\urladdr{http://math.ndsu.nodak.edu/faculty/ssatherw/}
\thanks{Sean Sather-Wagstaff is supported in part by a grant from the NSA}

\author{Tirdad Sharif}
\address{Tirdad Sharif, School of Mathematics, Institute for Research in Fundamental Sciences (IPM), P.O. Box 19395-5746, Tehran Iran}
\email{sharif@ipm.ir}
\urladdr{http://www.ipm.ac.ir/IPM/people/personalinfo.jsp?PeopleCode=IP0400060}
\thanks{Tirdad Sharif is supported by a grant from IPM, (No. 83130311).}

\author{Diana White}
\address{Diana White, Department of Mathematical \& Statistical Sciences
University of Colorado Denver
Campus Box 170
P.O. Box 173364
Denver, CO 80217-3364 USA}
\email{Diana.White@ucdenver.edu}
\urladdr{http://www.math.cudenver.edu/\~{}diwhite/}

\title{Tate cohomology  with respect to semidualizing modules}

\date{\today}

\keywords{Abelian categories, balance,
Gorenstein dimensions, relative cohomology, semidualizing modules, Tate cohomology}
\subjclass[2000]{
Primary 13D07, 18G15, 18G25; Secondary 13D02, 13D05, 18G10, 18G20} 

\begin{abstract}
We investigate Tate cohomology of modules over a commutative noetherian ring with respect to semidualizing modules. We identify classes of modules admitting Tate resolutions and analyze the interaction  between the corresponding relative and Tate cohomology modules.  As an application of our approach, we  prove a general balance result for Tate cohomology. Our results are based on an analysis of Tate cohomology in abelian categories.
\end{abstract}

\maketitle

\section*{Introduction} \label{sec0}

This paper investigates Tate cohomology of objects
in abelian categories, inspired by the work of 
Avramov and Martsinkovsky~\cite{avramov:aratc}
and building from our own work~\cite{sather:crct,sather:gcac,sather:sgc}.
Much of our motivation comes from certain categories of modules
over a commutative ring $R$. For this introduction, we focus on 
this specific situation. (All rings in this paper
are commutative with  identity, and all modules are unital.)

An $R$-module $C$ is \emph{semidualizing} if
$R\cong \Hom_R(C,C)$ and $\ext^{\geq 1}_R(C,C)=0$.
(See Section~\ref{sec8} for background information about these modules.)
For example, the free module $R$ is semidualizing, as is a dualizing module.

Each semidualizing $R$-module $C$ comes equipped with
a certain number of  classes of $R$-modules that have good homological
properties with respect to $C$. One example is the class of
\emph{$C$-projective} $R$-modules $\catpc(R)$, consisting of the modules of the form
$P\otimes_R C$ for some projective $R$-module $P$. 
Another  example is the class $\opg(\catpc(R))$, containing the modules that
are built  by taking \emph{complete resolutions}
by modules in $\catpc(R)$.  
Other examples are the categories of modules $M$ that admit a bounded resolution
by modules from $\catpc(R)$ or from $\opg(\catpc(R))$; these are the modules $M$
with $\catpc\text{-}\pd_R(M)<\infty$ or $\opg(\catpc)\text{-}\pd_R(M)<\infty$.
For example, when $C=R$, the modules in 
$\opg(\catpc(R))$ are the \emph{Gorenstein projective $R$-modules}, and
$\opg(\catpc)\text{-}\pd_R(M)$ is the \emph{Gorenstein projective dimension} of $M$.

The first step in constructing a theory of Tate cohomology with respect 
to $C$ is to identify the modules $M$ that
admit appropriate resolutions: A
\emph{Tate $\catpc$-resolution} of $M$
is a diagram of chain maps  $T\to W\to M$
where $T$ and $W$ are certain chain complexes of modules from $\catpc(R)$.
The complexes $T$ and $W$ contain slightly different homological information about
$M$. For instance, $W$ is a resolution of $M$ which measures
$\opg(\catpc)\text{-}\pd_R(M)$ and $\catpc\text{-}\pd_R(M)$. 
The following result characterizes the modules which admit Tate $\catpc$-resolutions. 
It is contained in Theorem~\ref{cor666}.

\begin{intthm} \label{thmD}
Let $R$ be a commutative ring, and let $C$ be a semidualizing $R$-module.
An $R$-module $M$ admits a Tate $\catpc$-resolution
if and only if $\opg(\catpc)\text{-}\pd_R(M)$ is finite.
\end{intthm}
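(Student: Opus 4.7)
The plan follows the pattern of Avramov and Martsinkovsky's characterization of modules admitting a Tate projective resolution, adapted to the semidualizing setting. The key link is that a Tate $\catpc$-resolution of $M$ essentially exists if and only if some sufficiently high syzygy of $M$ lies in $\opg(\catpc)$, and the latter condition is equivalent to $\opg(\catpc)\text{-}\pd_R(M)$ being finite.

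For the ``only if'' direction, suppose $T \xrightarrow{\vartheta} W \to M$ is a Tate $\catpc$-resolution. Pick $n$ large enough that $\vartheta_i$ is an isomorphism for all $i \geq n$. The $n$-th syzygy $K$ of $W$ then coincides with the $n$-th syzygy of $T$; since $T$ is a complete $\catpc$-resolution---an exact complex of modules in $\catpc(R)$ that remains exact after applying $\Hom_R(-,Q)$ for every $Q \in \catpc(R)$---this syzygy lies in $\opg(\catpc)$ by the very definition of that class. Capping off the truncation of $W$ in low degrees by $K$ thus yields an $\opg(\catpc)$-resolution of $M$ of length at most $n$, so $\opg(\catpc)\text{-}\pd_R(M) \leq n < \infty$.

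For the ``if'' direction, set $g := \opg(\catpc)\text{-}\pd_R(M) < \infty$. Using results from earlier in the paper one obtains a proper $\catpc$-resolution $W \to M$ whose $g$-th syzygy $K$ belongs to $\opg(\catpc)$. By definition of $\opg(\catpc)$, the module $K$ admits a complete $\catpc$-resolution; splicing the ``lower'' part of this complete resolution onto the bottom $g$ terms of $W$, and identifying its ``upper'' part with the corresponding part of $W$ via a standard comparison between two $\catpc$-resolutions of $K$, produces an exact complex $T$ of modules in $\catpc(R)$ that is $\Hom_R(-,Q)$-acyclic for every $Q \in \catpc(R)$, together with a chain map $\vartheta\colon T \to W$ that is the identity in degrees $\geq g$. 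This is the required Tate $\catpc$-resolution.

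The principal obstacle is producing the proper $\catpc$-resolution $W$ of $M$ whose $g$-th syzygy lies in $\opg(\catpc)$ from the mere existence of a bounded $\opg(\catpc)$-resolution. This requires the horseshoe-style and approximation machinery developed earlier in the paper, in particular the compatibility between $\catpc$-resolutions and $\opg(\catpc)$-resolutions at the top syzygy. Once this replacement is in hand, the splicing and chain-map comparison in the converse direction are essentially formal, and the forward direction reduces immediately to the definition of a complete $\catpc$-resolution.
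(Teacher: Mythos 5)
Your proposal follows the same route as the paper's proof (Lemmas~\ref{cplt04} and~\ref{cplt01}, via Theorems~\ref{cplt05} and~\ref{cor666}): the forward direction reads off a high syzygy of $T$ as an object of $\opg(\catpc)$ and uses $\catpc(R)\subseteq\opg(\catpc(R))$ to produce a bounded $\opg(\catpc)$-resolution, while the converse splices the coresolution half of a complete $\catpc$-resolution of the $g$-th syzygy onto $W_{\geq g}$ and compares to get the morphism $T\to W$. Your description of the splice in the converse is a bit loose about which degrees of $W$ are kept versus replaced, and you correctly flag that the existence of the proper $\catpc$-resolution with $\opg(\catpc)$-syzygy relies on the paper's approximation machinery (here \cite[(3.3)]{sather:gcac}, \cite[(3.3)]{auslander:htmcma}, and Fact~\ref{disc01}); these are exactly the ingredients the paper invokes.
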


Given an $R$-module $M$ with a Tate $\catpc$-resolution
$T\to W\to M$,
one uses the complex $W$
to define the
\emph{relative cohomology} functors
$\ext_{\opg(\catpc)}^n(M,-)$ and $\ext_{\catpc}^n(M,-)$.
The complex $T$ is 
used to define the
\emph{Tate cohomology} functors $\smash{\comp{\ext}}_{\catpc}^n(M,-)$.
These cohomology functors are connected by
the following result; 
it is proved in~\eqref{prfA}, and the dual result is  Corollary~\ref{thmA'}.
The special case where $C=R$ and $M$ is finitely generated
is in~\cite[(7.1)]{avramov:aratc}.

\begin{intthm} \label{thmA}
Let $R$ be a commutative ring, and let $C$ be a semidualizing $R$-module.
Let  $M$ and $N$ be $R$-modules, and assume that
$d=\opg(\catpc)\text{-}\pd_R(M)<\infty$.
There is a long exact sequence that is natural in $M$ and $N$
\begin{align*}
0 \to
& \ext_{\opg(\catpc)}^1(M,N) \to
 \ext_{\catpc}^1(M,N) \to
\smash{\comp{\ext}}_{\catpc}^1(M,N) \to 
  \\
 \to
& \ext_{\opg(\catpc)}^2(M,N) \to
 \ext_{\catpc}^2(M,N) \to
\smash{\comp{\ext}}_{\catpc}^2(M,N) \to 
  \\
\cdots \to
& \ext_{\opg(\catpc)}^d(M,N) \to
 \ext_{\catpc}^d(M,N) \to
 \smash{\comp{\ext}}_{\catpc}^d(M,N) \to 
0
\end{align*}
and there are isomorphisms  
$\ext_{\catpc}^n(M,N) \xra{\cong}
\smash{\comp{\ext}}_{\catpc}^n(M,N)$
for each $n>d$.
\end{intthm}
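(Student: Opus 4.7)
The proof should adapt the strategy of Avramov--Martsinkovsky~\cite{avramov:aratc}, developed there for the absolute case ($C=R$) with $M$ finitely generated, to the present semidualizing setting. The plan is to produce, from the Tate $\catpc$-resolution of $M$, a short exact sequence of chain complexes whose cohomology long exact sequence (after applying $\Hom_R(-,N)$) matches the one claimed.

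By Theorem~D, $M$ admits a Tate $\catpc$-resolution $T\xrightarrow{\vartheta}W\xrightarrow{\epsilon}M$, with $T$ a complete $\catpc$-resolution, $W$ a proper $\catpc$-resolution of $M$, and $\vartheta$ an isomorphism in all sufficiently large degrees. First I would form a strict $\opg(\catpc)$-resolution $V$ of $M$ of length exactly $d$ by truncating $W$ at degree $d$: set $V_i=W_i$ for $i<d$, let $V_d=\ker(d_{d-1}^W)$ (which lies in $\opg(\catpc)$ since $\opg(\catpc)\text{-}\pd_R(M)=d$), and set $V_i=0$ for $i>d$. This $V$ is a proper $\opg(\catpc)$-resolution of $M$ computing $\ext_{\opg(\catpc)}^n(M,-)$. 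The natural surjection $W\twoheadrightarrow V$ (identity below $d$, the differential $d_d^W$ corestricted to $V_d$ in degree $d$, zero above) fits into a short exact sequence of complexes
\[
0 \longrightarrow U \longrightarrow W \longrightarrow V \longrightarrow 0,
\]
where the kernel $U$ is concentrated in degrees $\geq d$ with $U_d=\Omega_{d+1}(M)\in\opg(\catpc)$ and $U_i=W_i$ for $i>d$; via $\vartheta$, this $U$ matches the positive part of $T$ in sufficiently high degrees.

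Next, applying $\Hom_R(-,N)$ to this short exact sequence and using the $\Hom_R(\catpc,-)$-exactness built into the proper-resolution framework, I would obtain a long exact cohomology sequence whose three terms are identified (in the correct order, since $\Hom$ is contravariant) as $\ext_{\opg(\catpc)}^n(M,N)$, $\ext_{\catpc}^n(M,N)$, and $\smash{\comp{\ext}}_{\catpc}^n(M,N)$. The last identification uses the agreement of $U$ with the positive part of $T$ together with dimension shifting for Tate cohomology and the standard fact that Tate and relative $\catpc$-cohomology coincide in positive degrees on objects of $\opg(\catpc)$. The isomorphisms $\ext_{\catpc}^n(M,N)\xrightarrow{\cong}\smash{\comp{\ext}}_{\catpc}^n(M,N)$ for $n>d$ then follow from the long exact sequence together with the vanishing $\ext_{\opg(\catpc)}^n(M,N)=0$ for $n>d$, which holds because $V$ has length $d$.

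\emph{Main obstacle:} I expect the hardest step to be the identification of the third cohomology $\HH^n\Hom_R(U,N)$ with the Tate cohomology $\smash{\comp{\ext}}_{\catpc}^n(M,N)$: one must interpolate between the bounded-below complex $U$ and the doubly-unbounded complete resolution $T$ using the comparison $\vartheta$, and invoke the syzygy-shift behavior of Tate cohomology in a range where $U$ and $T$ visibly agree. A secondary technical subtlety is ensuring that $\Hom_R(-,N)$ preserves the exactness of $0\to U\to W\to V\to 0$ as a sequence of cochain complexes---the degree-$d$ module sequence $0\to\Omega_{d+1}(M)\to W_d\to V_d\to 0$ is not split in general---so one must exploit the $\Hom_R(\catpc,-)$-exactness properties of the proper resolutions established earlier in the paper.
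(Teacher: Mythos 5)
Your decomposition $0\to U\to W\to V\to 0$, with $U$ concentrated in degrees $\geq d$, cannot produce the claimed long exact sequence, and the gap is in the very step you flag as the main obstacle. Since $U_n=0$ for $n<d$, the complex $\Hom_R(U,N)$ vanishes in the relevant degrees and one gets $\HH_{-n}(\Hom_R(U,N))=0$ for all $1\le n<d$. But $\smash{\comp{\ext}}^n_{\catpc}(M,N)$ is in general nonzero in that range: by Lemma~\ref{dimshft03}\eqref{dimshft03item2} it is isomorphic to $\smash{\comp{\ext}}^{n-d}_{\catpc}(G,N)$ for the $d$th syzygy $G\in\opg(\catpc(R))$, which is honest Tate cohomology of a $\opg(\catpc)$-module and need not vanish in any degree. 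So the identification of the third term with Tate cohomology is impossible. Pushing your sequence through anyway would force the comparison maps $\ext^n_{\opg(\catpc)}(M,N)\to\ext^n_{\catpc}(M,N)$ to be isomorphisms for all $n<d$; by Corollary~\ref{isos} this would imply $\catpc\text{-}\pd_R(M)<\infty$, which is not in the hypothesis. Your secondary worry is also genuine: the degree-$d$ row $0\to\ker(\partial^W_d)\to W_d\to V_d\to 0$ is not $\Hom_R(-,N)$-exact in general, so the short exact sequence of complexes does not even stay exact after applying $\Hom_R(-,N)$.

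The piece of information that $U$ is missing is precisely the part of the complete resolution $T$ in degrees $0$ through $d-1$, where $T$ and $W$ differ. The paper's proof (which simply invokes Theorem~\ref{mixed01} with $\catw=\catpc(R)$ and $\catx=\opg(\catpc(R))$, and whose technical input is Lemma~\ref{l01}) uses the opposite truncation: the middle term of its short exact sequence is $\wti T=(T_{\geq 0})^+$, the augmented truncation of the complete resolution at degree $0$, supported in degrees $\geq -1$. One then has $\HH_{-n}(\Hom_R(\wti T,N))\cong\smash{\comp{\ext}}^n_{\catpc}(M,N)$ for all $n\geq 1$ because $\wti T$ and $T$ agree in all degrees $\geq 0$. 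Moreover, because each $\alpha_n$ may be taken to be a split surjection (Lemma~\ref{cplt01}), the resulting sequence $0\to\shift^{-1}X\to\wti T\to W\to 0$ is degreewise split, so it survives $\Hom_R(-,N)$. Finally, its kernel term $X$ is a bounded strict $\catw\catx$-resolution in the Auslander--Buchweitz sense, i.e., $X_0\in\opg(\catpc(R))$ and $X_i\in\catpc(R)$ for $i\geq 1$ — the reverse of your $V$, in which the $\opg(\catpc)$-object sits in the top degree. If you want to repair your argument, replace $V$ and $U$ by this Auslander--Buchweitz $X$ and the augmented truncation $\wti T$, respectively.
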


The next result
shows how Tate cohomology 
detects the finiteness of  $\catpc$-projec-tive dimension.
The proof is in~\eqref{prfB}; see also Corollary~\ref{thmB'}.

\begin{intthm} \label{thmB}
Let $R$ be a commutative ring, and let $C$ be a semidualizing $R$-module.
For an $R$-module $M$ with
$\opg(\catpc)\text{-}\pd_R(M)<\infty$,
the next conditions are equivalent:
\begin{enumerate}[\quad\rm(i)]
\item \label{thmB1}
$\catpc\text{-}\pd_R(M)<\infty$;
\item \label{thmB3}
$\smash{\comp{\ext}}_{\catpc}^n(-,M)=0$ 
for each (equivalently, for some) $n\in\mathbb{Z}$;
\item \label{thmB2}
$\smash{\comp{\ext}}_{\catpc}^n(M,-)=0$ 
for each (equivalently, for some) $n\in\mathbb{Z}$;
and
\item \label{thmB6}
$\smash{\comp{\ext}}_{\catpc}^0(M,M)=0$.
\end{enumerate}
\end{intthm}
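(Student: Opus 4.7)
The plan is to prove (i) $\Rightarrow$ (ii), (iii), (iv) directly, then reduce the converses via an $\opg(\catpc)$-approximation to the case $M \in \opg(\catpc)$, and finally handle that case by appealing to its complete $\catpc$-resolution. For (i) $\Rightarrow$ (ii), (iii), (iv): if $p = \catpc\text{-}\pd_R(M) < \infty$, then a bounded $\catpc$-resolution $W \to M$ of length $p$ serves as a Tate $\catpc$-resolution with trivial Tate piece $T = 0$; since $\smash{\comp{\ext}}^{\ast}_{\catpc}$ is computed from $T$, all Tate Ext groups vanish and the three conclusions follow immediately in their ``each $n$'' form.

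For the converses, set $d = \opg(\catpc)\text{-}\pd_R(M)$ and take a Gorenstein $\catpc$-approximation $0 \to K \to X \to M \to 0$ with $X \in \opg(\catpc)$ and $\catpc\text{-}\pd_R(K) \leq d - 1$. Since Tate Ext annihilates modules of finite $\catpc$-projective dimension (the previous paragraph applied to $K$), the long exact Tate sequences of this short exact sequence collapse to isomorphisms $\smash{\comp{\ext}}^n_{\catpc}(M,N) \cong \smash{\comp{\ext}}^n_{\catpc}(X,N)$ and $\smash{\comp{\ext}}^n_{\catpc}(N,M) \cong \smash{\comp{\ext}}^n_{\catpc}(N,X)$ for all $n \in \zz$ and all $R$-modules $N$; moreover $\catpc\text{-}\pd_R(M) < \infty$ iff $X \in \catpc$. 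Hence each of (ii), (iii), (iv) transfers between $M$ and $X$, and the task reduces to showing that any of these hypotheses on $X \in \opg(\catpc)$ forces $X \in \catpc$.

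For such an $X$, a complete $\catpc$-resolution $T$ makes Tate Ext shift-invariant via syzygies: $\smash{\comp{\ext}}^n_{\catpc}(X,N) \cong \smash{\comp{\ext}}^0_{\catpc}(\Omega^n X, N)$ with $\Omega^n X \in \opg(\catpc)$, and analogously in the first slot using cosyzygies from the other half of $T$. This reduces the ``some $n$'' conditions of (ii) and (iii) to their $n = 0$ counterparts, and thus uniformly to (iv), after replacing $X$ by an appropriate syzygy (still Gorenstein $\catpc$-projective, with $\catpc\text{-}\pd_R$ finite iff the original is). Finally, the canonical identification $\smash{\comp{\ext}}^0_{\catpc}(X,X) \cong \underline{\End}(X)$ (stable endomorphisms modulo those factoring through an object of $\catpc$) shows that vanishing forces $\id_X$ to factor through some $Q \in \catpc$, so $X$ is a summand of $Q$ and hence $X \in \catpc$.

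The principal obstacle I expect is this stable-hom description at $n = 0$: the paper defines Tate cohomology via the full diagram $T \to W \to M$ rather than via a single complete resolution, so extracting the ``identity factors'' interpretation of $\smash{\comp{\ext}}^0_{\catpc}(X,X) = 0$ — together with the functoriality needed to produce the long exact Tate sequences used in the reduction above — will require careful setup of Tate cohomology in the abelian-category framework, most plausibly by first proving a comparison theorem between Tate $\catpc$-resolutions of $M$ and complete $\catpc$-resolutions of the Gorenstein $\catpc$-approximation $X$.
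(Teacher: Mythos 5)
Your overall strategy is sound, and while it rearranges the steps it runs on the same ingredients as the paper's argument (which proves the abstract Theorem~\ref{tatewhat01} and specializes in~\ref{prfB}): Auslander--Buchweitz theory as in Definition~\ref{ab01}, dimension shifting, and a degree-zero lemma saying that $X\in\opg(\catpc(R))$ with $\smash{\comp{\ext}}_{\catpc}^0(X,X)=0$ lies in $\catpc(R)$. The paper uses the hull $0\to M\to K\to M^{(-1)}\to 0$ and two dimension-shift isomorphisms to pass from $\smash{\comp{\ext}}_{\catpc}^0(M,M)=0$ to $\smash{\comp{\ext}}_{\catpc}^0(M^{(-1)},M^{(-1)})=0$, then applies Lemma~\ref{tateperp01}; you instead use the approximation $0\to K\to X\to M\to 0$ together with long exact sequences (Lemmas~\ref{les01}--\ref{les02}) to transfer all four conditions to $X\in\opg(\catpc(R))$, and your stable-endomorphism description of $\smash{\comp{\ext}}_{\catpc}^0(X,X)$ is exactly the content, and essentially the proof, of Lemma~\ref{tateperp01}: the class of $\id_X$ vanishes if and only if $\id_X$ factors through $T_{-1}\in\catpc(R)$, whence $X$ is a summand of an object of $\catpc(R)$. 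Your route buys a clean reduction to the Gorenstein case at the cost of the full functoriality/LES package (Lemmas~\ref{comp01}, \ref{horseshoe02}, \ref{les01}, \ref{les02}), which you rightly flag as the main overhead; the paper's route needs only the dimension-shift Lemma~\ref{dimshft03}. Either way you will need the perpendicularity facts of Fact~\ref{sgcsummary} both to see that the relevant short exact sequences are $\Hom(\catpc(R),-)$-exact and to justify your unproved assertions that $\catpc\text{-}\pd_R(M)<\infty$ iff $X\in\catpc(R)$, and that passing to (co)syzygies of $X$ preserves finiteness of $\catpc\text{-}\pd$ (both come down to a splitting argument using $\opg(\catpc(R))\perp$ modules of finite $\catpc$-projective dimension, resp.\ its dual).

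One step is justified incorrectly, and you rely on it twice. From $\catpc\text{-}\pd_R(M)<\infty$ you conclude that ``all Tate Ext groups vanish'' because $M$ admits a Tate $\catpc$-resolution with $T=0$; this gives $\smash{\comp{\ext}}_{\catpc}^n(M,-)=0$ and hence condition~(iv), but it says nothing about $\smash{\comp{\ext}}_{\catpc}^n(-,M)$, which is computed from a Tate resolution $T_N\to W_N\to N$ of the \emph{first} argument as $\HH_{-n}(\Hom_R(T_N,M))$, so the triviality of $M$'s own Tate piece is irrelevant there. What is actually needed is that every totally $\catpc$-acyclic complex is $\Hom_R(-,M)$-exact when $\catpc\text{-}\pd_R(M)<\infty$; this is the (standard but not content-free) last statement of Fact~\ref{gtriv01}, used in Proposition~\ref{vanish01}\eqref{vanish01item1}. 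The same fact is what makes your second-variable long exact sequence over $K$ collapse to the isomorphisms $\smash{\comp{\ext}}_{\catpc}^n(N,M)\cong\smash{\comp{\ext}}_{\catpc}^n(N,X)$. With that supplied, your argument goes through.
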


The following balance result
is another one of our main theorems; it is proved in~\eqref{prfC}. 
Corollary~\ref{Gbalance02'} shows how
it improves upon a result of Asadollahi and Salarian~\cite[(4.8)]{asadollahi:ctgim}.
It also compliments work of Iacob~\cite[Thm.\ 2]{iacob:rbgtc}
and implies some of the main results of~\cite{sather:mfhdsdm};
see Corollary~\ref{holm1}.

\begin{intthm} \label{thmC}
Let $R$ be a commutative ring, and let $B$ and $C$ be  semidualizing $R$-modules
such that $B$ is in $\catgpc(R)$.
Set $B^{\dagger}=\Hom_R(B,C)$.
Let  $M$ and $N$ be $R$-modules
such that 
$\opg(\catp_B)\text{-}\pd_R(M)<\infty$
and
$\opg(\cati_{B^{\dagger}})\text{-}\id_R(N)<\infty$.
Then there are isomorphisms
for each $n\geq 1$
$$\smash{\comp{\ext}}_{\catp_B}^{n}(M, N)\cong
\smash{\comp{\ext}}_{\cati_{B^{\dagger}}}^{n}(M, N).$$
If $R$ is noetherian  and $C$ is dualizing for $R$, 
this isomorphism
holds for all $n\in\mathbb Z$.
\end{intthm}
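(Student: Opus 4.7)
My approach combines Theorem A with its dual Corollary A' and the balance theorem for the \emph{relative} cohomology functors $\ext^n_{\catp_B}(M,N)\cong \ext^n_{\cati_{B^{\dagger}}}(M,N)$ (for $n\geq 1$), which should be established earlier in the paper using the Foxby-type duality set up by the hypothesis $B\in\catgpc(R)$ and $B^{\dagger}=\Hom_R(B,C)$.

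Set $d=\opg(\catp_B)\text{-}\pd_R(M)$ and $d'=\opg(\cati_{B^{\dagger}})\text{-}\id_R(N)$. By Theorem A in the first variable, the natural map $\ext^n_{\catp_B}(M,N)\to \smash{\comp{\ext}}^n_{\catp_B}(M,N)$ is an isomorphism for $n>d$, and by Corollary A' in the second variable, $\ext^n_{\cati_{B^{\dagger}}}(M,N)\to \smash{\comp{\ext}}^n_{\cati_{B^{\dagger}}}(M,N)$ is an isomorphism for $n>d'$. Chaining these comparisons with the relative balance isomorphism yields the desired Tate balance
$$\smash{\comp{\ext}}^n_{\catp_B}(M,N)\cong \smash{\comp{\ext}}^n_{\cati_{B^{\dagger}}}(M,N)\qquad \text{for all } n>\max\{d,d'\}.$$

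To extend the isomorphism to all $n\geq 1$, I would apply dimension shifting in both arguments. On the $M$-side, choose a strict $\opg(\catp_B)$-resolution $0\to G\to P_{d-1}\to \cdots \to P_0\to M\to 0$ with $P_i\in\catp_B$ and $G\in\opg(\catp_B)$; the objects $P_i$ are acyclic for $\smash{\comp{\ext}}_{\catp_B}$ directly, and (through the Foxby-class interaction provided by $B\in\catgpc(R)$) also for $\smash{\comp{\ext}}_{\cati_{B^{\dagger}}}$, so the resulting long exact sequences give compatible shift isomorphisms in both Tate theories. A dual cosresolution argument on $N$ reduces $d'$ to zero as well. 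In the base case $d=d'=0$, one has $M\in\opg(\catp_B)$ and $N\in\opg(\cati_{B^{\dagger}})$, and both Theorem A and Corollary A' collapse (since the long exact sequences in Theorem A become empty), reducing the comparison to the relative balance and establishing the isomorphism for all $n\geq 1$. For the noetherian case with $C$ dualizing, one uses that the category $\opg(\catp_B)(R)$ is Frobenius-like: every object embeds into a complete $\catp_B$-resolution extending in both directions, so the dimension-shifting argument runs symmetrically and the isomorphism propagates to all $n\in\zz$.

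The main obstacle is verifying that the shift isomorphisms on the two sides are truly compatible through the comparison map between the Tate theories. This is fundamentally a naturality statement about the long exact sequences arising from the truncated resolutions, but carrying it out requires showing that objects of $\catp_B$ are acyclic for $\smash{\comp{\ext}}_{\cati_{B^{\dagger}}}$ (and dually that objects of $\cati_{B^{\dagger}}$ are acyclic for $\smash{\comp{\ext}}_{\catp_B}$). This cross-acyclicity, which is the technical heart of the argument, should be a consequence of the Foxby equivalence implicit in the hypothesis that $B$ lies in $\catgpc(R)$. A secondary subtlety is that the invoked relative balance must apply in the $\opg$-dimension setting rather than just the strict $\catp_B$-dimension setting, which we assume is available from the paper's earlier development.
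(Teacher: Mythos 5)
There is a genuine gap, and it sits exactly where your outline puts all the weight. The ``relative balance'' isomorphism $\ext^n_{\catp_B}(M,N)\cong\ext^n_{\cati_{B^{\dagger}}}(M,N)$ that you assume ``should be established earlier in the paper'' is not in the paper at all: no comparison between the two relative theories $\ext_{\catp_B}$ and $\ext_{\cati_{B^{\dagger}}}$ is ever stated or proved here, and such a comparison is not a routine consequence of Foxby duality --- it is of essentially the same depth as the Tate balance you are trying to prove (indeed, in your base case $d=d'=0$ your scheme reduces Theorem~\ref{thmC} for $n\geq 1$ entirely to this unproven statement). The paper's route is different: it proves Tate balance directly at the chain level. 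Theorem~\ref{tatebalance01} takes the complexes $\wti{T}$ and $\wti{S}$ produced from Tate (co)resolutions by Lemmas~\ref{l01} and~\ref{l01'} and establishes the quasiisomorphisms $\ahom(\wti{T},N)\simeq\ahom(\wti{T},\shift^{-1}\wti{S})\cong\ahom(\shift\wti{T},\wti{S})\simeq\ahom(M,\wti{S})$; the proof of Theorem~\ref{thmC} in~\eqref{prfC} is then only a verification that $\catw=\catp_B(R)$ and $\catv=\cati_{B^{\dagger}}(R)$ satisfy the hypotheses, using Fact~\ref{disc01}, the perpendicularities $\opg(\catp_B(R))\perp\cati_{B^{\dagger}}(R)$ and $\catp_B(R)\perp\opg(\cati_{B^{\dagger}}(R))$ from \cite[(6.16)]{sather:gcac}, and the relative-Ext vanishing of \cite[(6.15)]{sather:gcac}. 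The ``cross-acyclicity'' you correctly single out as the technical heart of your dimension-shifting step is precisely this imported perpendicularity (together with the $\ahom(-,\cati_{B^{\dagger}})$-exactness of the shifting sequences); declaring that it ``should be a consequence of the Foxby equivalence'' leaves the actual technical content unproven, so even granting a relative balance from the companion papers, your reduction to all $n\geq 1$ is not complete as written.

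Your treatment of the case $n\in\mathbb{Z}$ is also off target. The fact that every object of $\opg(\catp_B(R))$ sits inside a two-sided complete $\catp_B$-resolution holds for arbitrary semidualizing $B$ and $C$, so ``Frobenius-likeness'' cannot be what the hypothesis that $C$ is dualizing buys; if your argument worked as stated, the isomorphism would hold for all $n\in\mathbb{Z}$ with no extra assumption, which is not what the theorem asserts. The real issue for $n\leq 0$ is the descending induction through a $\catp_B\,\opg(\catp_B)$-hull $0\to M\to K\to M'\to 0$: the middle term $K$ has finite $\catp_B$-projective dimension, which kills $\smash{\comp{\ext}}_{\catp_B}(K,-)$, but one also needs $\smash{\comp{\ext}}_{\cati_{B^{\dagger}}}^{*}(K,N)=0$, and the paper secures this by requiring $K$ to have finite $\cati_{B^{\dagger}}$-injective dimension, i.e., by the equality $\operatorname{res}\comp{\catp_B(R)}=\operatorname{cores}\comp{\cati_{B^{\dagger}}(R)}$ of Lemma~\ref{lemold} --- which is exactly where ``$R$ noetherian and $C$ dualizing'' enters. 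Your sketch never invokes the dualizing hypothesis, which signals that the extension to nonpositive degrees has not actually been addressed.
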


We conclude this section by summarizing the contents of this paper.
Section~\ref{sec1}  contains notation and background information
on the relevant subcategories of abelian categories.
Section~\ref{sec8} specifies the examples
arising from semidualizing modules.
Section~\ref{sec7} focuses on the  main properties
of Tate resolutions; it contains the proof of Theorem~\ref{thmD}. 
In Section~\ref{sec4}, we investigate the fundamental properties of Tate cohomology
and prove Theorem~\ref{thmA}.
Section~\ref{sec10} analyzes the vanishing behavior of these functors
and contains the proof of Theorem~\ref{thmB}.
Finally, Section~\ref{sec6} deals with balance for Tate cohomolgy
including the proof of Theorem~\ref{thmC}.

\section{Categories, Resolutions, and Relative Cohomology}\label{sec1}

We begin with some notation and terminology for use throughout this paper.

\begin{defn} \label{notation01}
Throughout this work
$\cata$ is an abelian category, and $\catab$ is the category of abelian groups.
Write $\catp=\catp(\cata)$ and $\cati=\cati(\cata)$ 
for the subcategories of projective and injective
objects in $\cata$, respectively.
We use the term ``subcategory'' to mean a ``full and additive  
subcategory that is closed under isomorphisms.''
A subcategory $\catx$ of $\cata$  is
\emph{exact} if it is closed under  direct summands and extensions;
it satisfies the \emph{two-of-three property}
when it is 
closed under extensions,
kernels of epimorphisms, and
cokernels of monomorphisms.
\end{defn}

\begin{defn} \label{notation01a}
We fix subcategories $\catx,\caty,\catw,\catv\subseteq\cata$ such that
$\catw\subseteq\catx$
and $\catv \subseteq\caty$.
Write $\catx\perp\caty$ 
if $\aext^{\geq1}(X,Y)=0$ for each object $X\in\catx$ and each object $Y\in\caty$.
For an object $M\in \cata$, write $M\perp\caty$ (resp., $\catx\perp M$)
if $\aext^{\geq1}(M,Y)=0$ for each object  $Y\in\caty$
(resp., if $\aext^{\geq1}(X,M)=0$ for each object  $X\in\catx$).
We say that $\catw$ is a \emph{cogenerator} for $\catx$ if,
for each object $X\in\catx$, there is an exact sequence
$$0\to X\to W\to X'\to 0$$
with $W\in\catw$ and $X'\in \catx$; and
$\catw$ is an \emph{injective cogenerator} for $\catx$ if 
$\catw$ is a cogenerator for $\catx$ such that $\catx\perp\catw$.
The terms \emph{generator} and \emph{projective generator} are
defined dually.
\end{defn}

\begin{defn} \label{notation07}
An \emph{$\cata$-complex} is a sequence of 
homomorphisms in $\cata$
$$M =\cdots\xra{\partial^M_{n+1}}M_n\xra{\partial^M_n}
M_{n-1}\xra{\partial^M_{n-1}}\cdots$$
such that $ \partial^M_{n-1}\partial^M_{n}=0$ for each integer $n$.
We frequently (and without warning)
identify objects in $\cata$ with complexes concentrated in degree 0.

Fix an integer $i$ and an $\cata$-complex $M$.
The
$i$th \emph{homology object} of $M$ is
$\HH_i(M)=\Ker(\partial^M_{i})/\im(\partial^M_{i+1})$.
The $i$th \emph{suspension} (or \emph{shift}) of
$M$, denoted $\shift^i M$, is the complex with
$(\shift^i M)_n=M_{n-i}$ and $\partial_n^{\shift^i M}=(-1)^i\partial_{n-i}^M$.
We set $\shift M=\shift^1 M$.
The \emph{hard truncation} $M_{\geq i}$ is the complex
$$M_{\geq i}=\cdots\xra{\partial^M_{i+2}}M_{i+1}\xra{\partial^M_{i+1}}
M_{i}\to 0$$
and the hard truncations $M_{> i}$, $M_{\leq i}$, and $M_{< i}$ are defined similarly.
\end{defn}

\begin{defn} \label{notation07a}
Let $M$ and $N$ be $\cata$-complexes.
The  \emph{Hom-complex} $\ahom(M,N)$ is the complex of abelian groups defined as
$\ahom(M,N)_n=\prod_p\ahom(M_p,N_{p+n})$
with $n$th differential $\partial_n^{\ahom(M,N)}$ given by
$\{f_p\}\mapsto \{\partial^{N}_{p+n}f_p-(-1)^nf_{p-1}\partial^M_p\}$.
A \emph{morphism} from $M$ to $N$
is an element of $\ker(\partial_0^{\ahom(M,N)})$;
it  is \emph{null-homotopic} if 
it is in $\im(\partial_1^{\ahom(M,N)})$.
The identity morphism $M\to M$ is denoted $\id_M$.
The complex $M$ is \emph{$\ahom(\catx,-)$-exact} if 
$\ahom(X,M)$ is exact for each object $X\in\catx$.  
The term \emph{$\ahom(-,\catx)$-exact} is
defined dually.

Fix morphisms of $\cata$-complexes $\alpha, \alpha'\colon M\to N$.
We say that
$\alpha$ and $\alpha'$ are \emph{homotopic} if the difference
$\alpha-\alpha'$ is null-homotopic.  The morphism $\alpha$ is a
\emph{homotopy equivalence} if there is a morphism
$\beta\colon N\to M$ such that 
$\beta \alpha $ is homotopic to $\id_{M}$ and
$\alpha\beta$ is homotopic to $\id_{N}$.
The complex $M$ is \emph{contractible} if 
$\id_M$ is null-homotopic.  

For each integer $i$, the morphism  $\alpha$
induces a morphism on homology objects
$\HH_i(\alpha)\colon\HH_i(M)\to\HH_i(N)$, and $\alpha$ is a
\emph{quasiisomorphism} when each $\HH_i(\alpha)$ is an isomorphism.
The \emph{mapping cone} of $\alpha$ is the complex
$\cone(\alpha)$ defined as
$\cone(\alpha)_n=N_n\oplus M_{n-1}$
with $n$th differential 
$\partial^{\cone(\alpha)}_n 
= \Bigl(\begin{smallmatrix}\partial_{n}^{N} & \alpha_{n-1} \\ 0 & -\partial_{n-1}^{M}
\end{smallmatrix} \Bigr)$.
\end{defn}

\begin{fact} \label{disc0001}
Let $\alpha\colon M\to N$ be a morphism of $\cata$-complexes.
There is a degreewise split exact sequence
$0\to \shift^{-1} N\to\shift^{-1}\cone(\alpha)\to M\to 0$
of $\cata$-complexes.
The complex $\cone(\id_M)$ is contractible.

If $M$ is contractible, then it is exact and for every $\cata$-complex $L$,
the complexes
$\ahom(M,L)$ and $\ahom(L,M)$ are exact.
\end{fact}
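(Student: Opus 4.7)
The statement packages three standard facts about mapping cones and contractible complexes, and I would treat each in turn.

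The plan for the first claim is simply to spell out the maps and check naturality. In degree $n$ we have $(\shift^{-1}N)_n=N_{n+1}$, $(\shift^{-1}\cone(\alpha))_n = N_{n+1}\oplus M_n$, and $M_n$. I would take the obvious inclusion $\iota_n=\left(\begin{smallmatrix} \id_{N_{n+1}} \\ 0\end{smallmatrix}\right)$ and the obvious projection $\pi_n=(0\ \id_{M_n})$. These are degreewise split by construction, so the only content is to verify that $\iota$ and $\pi$ commute with the differentials once the suspension sign $-1$ has been applied to both $\partial^N$ and the matrix differential of $\cone(\alpha)$; the diagonal blocks of the differential take care of both commutations, and the off-diagonal $\alpha$-block of $\cone(\alpha)$ never interferes because $\iota$ lands in the first summand and $\pi$ kills it. This is a short check I would not write out in detail.

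For the second claim I would exhibit an explicit contracting homotopy for $\cone(\id_M)$. The differential on $\cone(\id_M)_n=M_n\oplus M_{n-1}$ is $\partial_n=\left(\begin{smallmatrix}\partial_n^M & \id_{M_{n-1}} \\ 0 & -\partial_{n-1}^M\end{smallmatrix}\right)$. I would define $s_n\colon M_n\oplus M_{n-1}\to M_{n+1}\oplus M_n$ by the matrix $\left(\begin{smallmatrix} 0 & 0 \\ \id_{M_n} & 0\end{smallmatrix}\right)$ and compute directly
\[
\partial_{n+1}s_n+s_{n-1}\partial_n = \begin{pmatrix}\id_{M_n} & 0 \\ -\partial_n^M & 0\end{pmatrix}+\begin{pmatrix}0 & 0 \\ \partial_n^M & \id_{M_{n-1}}\end{pmatrix}=\id_{\cone(\id_M)_n},
\]
which is exactly the statement that $\id_{\cone(\id_M)}\in\im(\partial_1^{\ahom(\cone(\id_M),\cone(\id_M))})$, i.e.\ $\cone(\id_M)$ is contractible.

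For the third claim, assume $M$ is contractible and fix $s\in\ahom(M,M)_1$ with $\partial(s)=\id_M$, that is, $\partial^M_{p+1}s_p+s_{p-1}\partial^M_p=\id_{M_p}$ for all $p$. Applied to a cycle $z\in\ker\partial^M_p$, this gives $z=\partial^M_{p+1}(s_p(z))$, proving exactness of $M$. For the Hom-complexes, the conceptual way I would present it is that contractibility of $M$ says $\id_M$ is null-homotopic, so the zero morphism $M\to 0$ is a homotopy equivalence; since the functors $\ahom(-,L)$ and $\ahom(L,-)$ preserve homotopies and the identity, they send this to homotopy equivalences $\ahom(M,L)\to 0$ and $\ahom(L,M)\to 0$, so both Hom-complexes are contractible and hence exact by the previous sentence. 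If a reader wants the explicit homotopy, one can check that for $f\in\ahom(M,L)_n$ the assignment $T(f)_p=(-1)^nf_{p+1}s_p$ satisfies $\partial T+T\partial=\id$ on $\ahom(M,L)$, and dually for $\ahom(L,M)$ using precomposition with $s$.

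None of these three verifications is really the obstacle; the only mildly delicate point is tracking the sign convention on $\shift$ and $\partial^{\ahom(M,N)}$ in Definitions 1.3--1.4 so that the homotopy formulas come out right. I would handle this by recording the sign conventions once at the start of the proof and then doing the matrix/sign calculation cleanly.
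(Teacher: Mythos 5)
Your proposal is correct: the paper states this as a Fact without proof, and your verification (the explicit degreewise splitting of $\shift^{-1}\cone(\alpha)$, the standard contracting homotopy $\left(\begin{smallmatrix}0&0\\ \id&0\end{smallmatrix}\right)$ for $\cone(\id_M)$, and the observation that $\ahom(-,L)$ and $\ahom(L,-)$ preserve null-homotopies so a contractible complex has exact, indeed contractible, Hom-complexes) is exactly the standard argument being implicitly invoked. The sign bookkeeping you flag works out with the paper's conventions from Definitions~\ref{notation07}--\ref{notation07a}, so nothing further is needed.
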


\begin{defn} \label{notation03}
Let $X$ be an $\cata$-complex. It is \emph{bounded} if $X_n=0$ for $|n|\gg 0$. 

Assume that
$X_{-n}=0=\HH_n(X)$ for all $n>0$ and that $M\cong\HH_0(X)$.  The natural morphism
$X\to M$ is a quasiisomorphism.  
If each $X_n$ is  in $\catx$, then
$X$ is an
\emph{$\catx$-resolution} of $M$, and
the associated exact sequence 
$$X^+ = \cdots\xra{\partial^X_{2}}X_1
\xra{\partial^X_{1}}X_0\to M\to 0$$ 
is the \emph{augmented
$\catx$-resolution} of $M$ associated to $X$.
Sometimes we call the quasiisomorphism $X\xra\simeq M$ 
a resolution of $M$.

An $\catx$-resolution $X$ is \emph{proper} if
$X^+$ is $\ahom(\catx,-)$-exact. We set
$$\proprescatx= 
\text{the subcategory of objects of $\cata$ admitting a proper 
$\catx$-resolution.}$$
The \emph{$\catx$-projective dimension} of $M$ is the quantity
$$\xpd(M)=\inf\{\sup\{n\geq 0\mid X_n\neq 0\}\mid \text{$X$ is an
$\catx$-resolution of $M$}\}.$$ 
The objects of $\catx$-projective dimension 0 are
exactly the objects of $\catx$.
We set
$$\finrescatx=
\text{the subcategory of objects $M\in\cata$ with $\xpd(M)<\infty$.}$$
One checks readily that $\proprescatx$ and $\finrescatx$ are subcategories
of $\cata$ that contain
$\catx$.

We define \emph{(proper) $\caty$-coresolutions} and \emph{$\caty$-injective dimension}
dually.  The \emph{augmented 
$\caty$-coresolution} associated to a $\caty$-coresolution $Y$ is denoted $^+Y$,
and the $\caty$-injective dimension of $M$ is $\yid(M)$.
We set
\begin{align*}
\propcorescaty
&= 
\text{the subcategory of objects of $\cata$ admitting a proper 
$\caty$-coresolution}\\
\fincorescaty
&= 
\text{the subcategory of objects $N\in\cata$ with $\yid(N)<\infty$}
\end{align*}
which  are subcategories
of $\cata$ that contain $\caty$.
\end{defn}

Auslander and Buchweitz~\cite[(1.1)]{auslander:htmcma}
provide the next important constructions.

\begin{defn}\label{ab01}
Assume that $\catx$ and $\caty$ are exact and that $\catw$ and $\catv$ are 
closed under direct summands.
Assume that $\catw$ is a cogenerator for $\catx$ and that
$\catv$ is a generator for $\caty$, and
fix an object $M\in\finrescatx$.  There exist
exact sequences in $\cata$
\begin{equation*} 
0\to K \to X_0 \to M \to 0
\qquad \qquad \qquad
0\to M\to K' \to X' \to 0
\end{equation*}
such that $K,K'\in\finrescatw$ and
$X_0,X'\in\catx$.
The first sequence  is a
\emph{$\catw\catx$-approxima-tion} of $M$, and
the second sequence  is a
\emph{$\catw\catx$-hull} of $M$.
It follows that $M$ admits a \emph{bounded strict $\catw\catx$-resolution},
that is, a bounded $\catx$-resolution $X\xra{\simeq} M$ such that
$X_i\in\catw$ for each $i\geq 1$.  This resolution is obtained
by splicing a bounded $\catw$-resolution of $K$ with the 
$\catw\catx$-approximation.

Similarly, an object $N$ in $\fincorescaty$ admits  a \emph{bounded strict $\caty\catv$-coresolution},
that is, a bounded $\caty$-coresolution $N\xra{\simeq} Y$ such that
$Y_i\in\catv$ for each $i\leq -1$. 
\end{defn}

\begin{defn} \label{rel02}
Let $f\colon M\to M'$ and $g\colon N\to N'$
be morphisms in $\cata$.
If $M$ admits a proper $\catw$-resolution 
$W\xra{\gamma} M$, then for each integer $n$
the \emph{$n$th relative cohomology group} $\waext^n(M,N)$ is
$$\waext^n(M,N)=\HH_{-n}(\ahom(W,N)).$$
If $M'$ also admits a proper $\catw$-resolution 
$W'\xra{\gamma} M'$, then~\cite[(1.8.a)]{sather:gcac} 
yields a lifting $\ol{f}\colon W\to W'$ of $f$ that is unique up to homotopy, and 
we define
\begin{gather*}
\waext^n(f,N)=\HH_{-n}(\ahom(\ol{f},N))\colon\waext^n(M',N)\to\waext^n(M,N) \\
\waext^n(M,g)=\HH_{-n}(\ahom(W,g))\colon\waext^n(M,N)\to\waext^n(M,N'). 
\end{gather*}
We write $\waext^{\geq1}(M,\caty)=0$ if $\waext^{\geq1}(M,Y)=0$
for each object $Y\in\caty$.  When $\catx\subseteq\proprescatw$,
we write $\waext^{\geq1}(\catx,\caty)=0$ if $\waext^{\geq1}(X,Y)=0$
for each object $X\in\catx$ and each object $Y\in\caty$.

When $N$ and $N'$ admit proper $\catv$-coresolutions, 
the \emph{$n$th relative cohomology group} 
$\awext^n(M,N)$ is defined dually,
as are the maps
\begin{gather*}
\avext^n(f,N)\colon\avext^n(M',N)\to\avext^n(M,N) \\
\avext^n(M,g)\colon\avext^n(M,N)\to\avext^n(M,N')
\end{gather*}
and similarly for the conditions $\avext^{\geq1}(\catx,N)=0$ and
$\avext^{\geq1}(\catx,\caty)=0$.
\end{defn}

\begin{defn} \label{rel02a}
Let $M,N$ be objects in $\cata$. 
If $M$ admits a proper $\catw$-resolution 
$W\xra{\gamma} M$ and a proper $\catx$-resolution 
$X\xra{\gamma'} M$, let $\ol{\id_M}\colon W\to X$ be a 
lifting of the identity $\id_M\colon M\to M$,
cf.\ \cite[(1.8.a)]{sather:gcac}.
This is a quasiisomorphism
such that $\gamma=\gamma'\ol{\id_M}$.
We set
$$
\xwacomp^n(M,N)=\HH_{-n}(\ahom(\ol{\id_M},N))\colon\xaext^n(M,N)\to\waext^n(M,N). 
$$
When $N$ admits a proper $\caty$-coresolution 
and a proper $\catv$-coresolution, the map
$$
\ayvcomp^n(M,N)\colon\ayext^n(M,N)\to\avext^n(M,N)
$$
is defined similarly.
\end{defn}

\begin{fact} \label{rel02'}
Let $R$ be a commutative ring, and assume that $\catw$ is a subcategory of
$\cata=\catm(R)$. Let $M,M',N,N'$ be $R$-modules
equipped with $R$-module homomorphisms
$f\colon M\to M'$ and $g\colon N\to N'$.
If $M$ admits a proper $\catw$-resolution, then
each group $\waext^n(M,N)$
is an $R$-module.
If $M'$ also admits a proper $\catw$-resolution, then 
the maps
$\waext^n(f,N)$ and $\waext^n(M,g)$
are $R$-module homomorphisms.
Similar comments hold for $\avext$ and the maps from Definition~\ref{rel02a}.
\end{fact}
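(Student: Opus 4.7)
The plan is entirely routine bookkeeping: since $R$ is commutative, every $\ahom$-group in $\catm(R)$ carries a natural $R$-action that commutes with composition on either side, and the claims follow directly from the definitions. I would spell this out in three steps.

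First, for any $R$-modules $P$ and $Q$, the group $\Hom_R(P,Q)$ is naturally an $R$-module via $(r\phi)(x)=r\phi(x)$, and the identities $(r\phi)\circ\psi = r(\phi\circ\psi) = \phi\circ(r\psi)$ hold whenever the compositions are defined. Fixing a proper $\catw$-resolution $W\xra\gamma M$, this makes each degree $\ahom(W,N)_n=\prod_p\Hom_R(W_p,N_{p+n})$ an $R$-module, and the formula $\{\phi_p\}\mapsto\{\partial^N_{p+n}\phi_p-(-1)^n\phi_{p-1}\partial^W_p\}$ for the differential is $R$-linear, being built from pre- and post-composition with the $R$-linear maps $\partial^W_p$ and $\partial^N_{p+n}$. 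Hence $\ahom(W,N)$ is a complex of $R$-modules with $R$-linear differentials, so its homology $\waext^n(M,N)=\HH_{-n}(\ahom(W,N))$ inherits an $R$-module structure.

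Next, when $M'$ also admits a proper $\catw$-resolution $W'\xra{\gamma'}M'$, the lifting $\ol{f}\colon W\to W'$ of $f$ supplied by~\cite[(1.8.a)]{sather:gcac} consists of $R$-linear maps in each degree. Thus $\ahom(\ol{f},N)$ is $R$-linear in each degree (it is defined by precomposition with the $\ol{f}_p$), and likewise $\ahom(W,g)$ is $R$-linear in each degree (postcomposition with the $R$-linear map $g$). Both are morphisms of complexes of $R$-modules, so passing to $\HH_{-n}$ shows that $\waext^n(f,N)$ and $\waext^n(M,g)$ are $R$-module homomorphisms. The dual statement for $\avext^n$ follows by the same argument applied to proper $\catv$-coresolutions, and the comparison maps $\xwacomp^n$ and $\ayvcomp^n$ of Definition~\ref{rel02a} are induced by liftings of $\id_M$ and $\id_N$ that are again degreewise $R$-linear. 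There is no substantive obstacle; the fact is recorded solely to fix the $R$-linear structure for later sections.
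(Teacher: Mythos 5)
Your argument is correct and is exactly the routine verification the paper has in mind; indeed the statement is recorded as a Fact without proof precisely because the $R$-structure on $\ahom(W,N)$ and the degreewise $R$-linearity of $\ahom(\ol{f},N)$, $\ahom(W,g)$, and the comparison maps are immediate from commutativity of $R$. Nothing is missing, and no alternative route is involved.
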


\begin{fact} \label{rel03}
The uniqueness of the liftings in~\cite[(1.8)]{sather:gcac} shows that 
\begin{align*}
\waext^n&\colon\proprescatw\times\cata\to \catab 
&\text{and}&&
\avext^n&\colon\cata\times \propcorescatv\to \catab
\end{align*}
are well-defined bifunctors, and
\begin{align*}
\xwacomp ^n&\colon\xaext^n|_{(\proprescatw\cap \proprescatx)\times\cata}\to 
\waext^n|_{(\proprescatw\cap \proprescatx)\times\cata} \\
\ayvcomp ^n&\colon\ayext^n|_{\cata \times(\propcorescatv\cap \propcorescaty)}\to 
\avext^n|_{\cata \times(\propcorescatv\cap \propcorescaty)}
\end{align*}
are well-defined natural transformations,
independent of resolutions and liftings.

One has $\xaext^{\geq 1}(\catx,-)=0$ and $\ayext^{\geq 1}(-,\caty)=0$.
There is a natural equivalence  $\xaext^0\cong\ahom$ on
$\proprescatx\times\cata$, and similarly
$\ayext^0\cong\ahom$ on
$\cata\times \propcorescaty$.
\end{fact}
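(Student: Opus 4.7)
The plan is to organize everything around the homotopy-uniqueness of liftings provided by \cite[(1.8)]{sather:gcac}: any morphism $M\to M'$ in $\cata$ lifts to a chain map between proper $\catw$-resolutions, and any two such lifts are chain-homotopic (the analogous statement holds with $\catx$ in place of $\catw$, and dually for coresolutions). Every verification that follows reduces to unpacking this single fact.

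First I would establish well-definedness of $\waext^n$ as a bifunctor on $\proprescatw\times\cata$ in stages. Fix once and for all a proper $\catw$-resolution $W\xra{\simeq}M$ for every $M\in\proprescatw$. For a morphism $f\colon M\to M'$, choose any lift $\overline{f}\colon W\to W'$ and set $\waext^n(f,N)=\HH_{-n}(\ahom(\overline{f},N))$. The required checks are: (i) chain-homotopic morphisms of $\cata$-complexes induce the same map on $\HH_{-n}\ahom(-,N)$, so the result is independent of the choice of $\overline{f}$; (ii) the composite of two lifts lifts the composite (up to homotopy), giving functoriality in $M$; and (iii) functoriality in $N$ is immediate from the naturality of $\ahom(W,-)$. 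Independence from the chosen proper $\catw$-resolution of $M$ then follows by lifting $\id_M$ both ways between two such resolutions: the two self-lifts are homotopic to the true identity, so the induced maps on homology are mutually inverse, yielding canonical isomorphisms natural in $M$ and $N$. The statement for $\avext^n$ is formally dual.

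For the comparison map $\xwacomp^n$, the same lifting calculus applies: any two lifts $\overline{\id_M}\colon W\to X$ are homotopic, so $\HH_{-n}(\ahom(\overline{\id_M},N))\colon\xaext^n(M,N)\to\waext^n(M,N)$ is well-defined, and an analogous argument transporting between two choices of $W$ or $X$ shows it is independent of the chosen resolutions. Naturality in $M$ follows from the observation that for $f\colon M\to M'$, the two composites $\overline{\id_{M'}}\overline{f}_W$ and $\overline{f}_X\overline{\id_M}$ are both lifts of $f$ along $W\to X'$, hence homotopic, and therefore agree after applying $\HH_{-n}\ahom(-,N)$. Naturality in $N$ is direct.

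Finally, $\xaext^{\geq1}(\catx,-)=0$ is immediate: any $X\in\catx$ serves as its own proper $\catx$-resolution (a complex concentrated in degree $0$), so $\ahom(X,N)$ is concentrated in degree $0$ and has no homology in negative degrees. For the natural equivalence $\xaext^0\cong\ahom$ on $\proprescatx\times\cata$, take any proper $\catx$-resolution $X\xra{\simeq}M$: the calculation $\HH_0(\ahom(X,N))=\ker(\ahom(X_0,N)\to\ahom(X_1,N))\cong\ahom(M,N)$ uses only that $M\cong X_0/\im\partial^X_1$, and naturality again follows from the lifting calculus. The main obstacle throughout is not any single deep step but the bookkeeping required to make the naturality squares commute: each choice of lifting differs from any other by a chain-homotopy, and one must repeatedly verify that these chain-homotopies vanish after taking $\HH_{-n}\ahom(-,N)$.
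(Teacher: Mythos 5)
Your proposal is correct and follows exactly the route the paper intends: the paper gives no proof beyond citing the homotopy-uniqueness of liftings from \cite[(1.8)]{sather:gcac}, and your argument is the standard unpacking of that fact (homotopy-invariance of $\HH_{-n}\ahom(-,N)$, functoriality via composed lifts, independence of resolutions by lifting $\id_M$ both ways, the trivial resolution for the vanishing, and the cokernel computation for $\xaext^0\cong\ahom$). No gaps worth flagging.
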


We conclude this section by summarizing some aspects  of~\cite{sather:sgc}.

\begin{defn} \label{defG}
An exact complex   in $\catw$ is \emph{totally $\catw$-acyclic} if it is 
$\ahom(\catw,-)$-exact and $\ahom(-,\catw)$-exact.
Let
$\opg(\catw)$ denote the  subcategory of $\cata$ whose modules are
of the form $M\cong\coker(\partial_1^W)$ for some totally $\catw$-acyclic complex $W$
in $\catw$;
we say that $W$ is a \emph{complete $\catw$-resolution of $M$}.
\end{defn}

\begin{fact} \label{gtriv01}
A contractible $\catw$-complex is totally $\catw$-acyclic; see Fact~\ref{disc0001}.

It is straightforward to show that $\catw$ is a subcategory of
$\opg(\catw)$: if $N\in\catw$, then the complex $0\to N\xra{\id_N}N\to 0$ is a 
complete $\catw$-resolution of $N$.  

Let $M$ be an object in $\opg(\catw)$ with complete $\catw$-resolution $W$.
The hard truncation 
$W_{\geq 0}$ is a proper $\catw$-resolution of $M$ such that $W_{\geq 0}^+$ is
$\ahom(-,\catw)$-exact, and 
$W_{< 0}$ is a proper $\catw$-coresolution of $M$ such that $^+W_{< 0}$ is
$\ahom(\catw,-)$-exact.  So, one has $M\in\proprescatw\cap\propcorescatw$
and 
$\waext^{\geq1}(M,\catw)=0=\awext^{\geq1}(\catw,M)$.

Using standard arguments, one sees readily that any complete $\catw$-resolution
is $\ahom(\fincorescatw,-)$-exact
and $\ahom(-,\finrescatw)$-exact.
\end{fact}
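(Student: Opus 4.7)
The plan is to verify each assertion in turn, all being direct applications of Fact~\ref{disc0001} together with the definitions. For the first claim, if $W$ is a contractible $\catw$-complex, Fact~\ref{disc0001} gives exactness of $W$ and of $\ahom(L,W)$, $\ahom(W,L)$ for every $\cata$-complex $L$; specializing $L$ to objects of $\catw$ yields total $\catw$-acyclicity. For the second claim, given $N\in\catw$, I would take $W$ to be the complex with $W_0=W_{-1}=N$, $\partial^W_0=\id_N$, and all other terms zero. This is a shift of $\cone(\id_N)$, hence contractible by Fact~\ref{disc0001}, hence totally $\catw$-acyclic by the first claim; since $\coker(\partial^W_1)=N$, we obtain $N\in\opg(\catw)$.

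For the assertions about $W_{\geq 0}$ and $W_{<0}$, the key observation is that $M\cong\coker(\partial^W_1)$ also identifies with the subobject $\ker(\partial^W_{-1})=\im(\partial^W_0)$ of $W_{-1}$, since $W$ is exact at $W_{-1}$. The augmented complex $W_{\geq 0}^+\colon\cdots\to W_1\to W_0\to M\to 0$ is then exact by construction. The step requiring the most care is verifying that $W_{\geq 0}^+$ is $\ahom(\catw,-)$-exact: given $X\in\catw$ and $f\colon X\to M$, composing with the inclusion $\iota\colon M\hookrightarrow W_{-1}$ yields $\iota f$ annihilated by $\partial^W_{-1}$, and since $W$ is $\ahom(\catw,-)$-exact, this lifts through $\partial^W_0$ to some $g\colon X\to W_0$ with $\partial^W_0 g=\iota f$; the injectivity of $\iota$ then forces $\epsilon g=f$, where $\epsilon\colon W_0\to M$ denotes the augmentation. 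Exactness further to the left is inherited directly from exactness of $\ahom(X,W)$. The $\ahom(-,\catw)$-exactness of $W_{\geq 0}^+$ follows from the universal property of cokernels combined with $\ahom(-,\catw)$-exactness of $W$, and the dual argument handles $W_{<0}$. The vanishings $\waext^{\geq 1}(M,\catw)=0$ and $\awext^{\geq 1}(\catw,M)=0$ then follow by computing these groups via $W_{\geq 0}$ and $W_{<0}$, invoking the properness just established.

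For the last assertion, I would proceed by induction on $\wpd$ for objects of $\finrescatw$. The base case is the definition of total $\catw$-acyclicity, and the inductive step uses a short exact sequence $0\to Y'\to W'\to Y\to 0$ with $W'\in\catw$ and $\wpd(Y')<\wpd(Y)$, together with a standard double-complex argument to reduce to the inductive hypothesis. The coresolution version is dual. The step I expect to require the most care is the lifting argument for properness of $W_{\geq 0}^+$ at the tail, which hinges on the dual identification of $M$ both as a quotient of $W_0$ and as a subobject of $W_{-1}$; the remainder is essentially bookkeeping with the definitions.
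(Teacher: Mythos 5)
Your handling of the first three assertions is correct and is the argument the paper has in mind: contractibility plus Fact~\ref{disc0001}, the complex $0\to N\xra{\id_N}N\to 0$ (a shift of $\cone(\id_N)$) for $\catw\subseteq\opg(\catw)$, and the identification of $M\cong\coker(\partial^W_1)$ with $\Ker(\partial^W_{-1})$ to get properness of $W_{\geq 0}$ together with the extra $\ahom(-,\catw)$-exactness. One small imprecision there: the vanishing $\waext^{\geq 1}(M,\catw)=0$ is not a consequence of properness; properness only licenses computing $\waext$ from $W_{\geq 0}$, and the vanishing itself comes from the $\ahom(-,\catw)$-exactness of $W$ (total acyclicity).

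The genuine gap is in the last assertion. In your inductive step you take $0\to Y'\to W'\to Y\to 0$ with $W'\in\catw$ and $\wpd(Y')<\wpd(Y)$ and appeal to ``a standard double-complex argument,'' but the terms $T_i$ of a complete $\catw$-resolution are not projective, so $\ahom(T_i,-)$ is only left exact: applying $\ahom(T,-)$ to the syzygy sequence need not give a short exact sequence of complexes, and in the double complex $\ahom(T,X^+)$ the rows $\ahom(T,X_j)$ are exact while the columns $\ahom(T_i,X^+)$ need not be, so no acyclic-assembly argument is available. Unwinding that double complex only shows that exactness of $\ahom(T,Y)$ is equivalent to exactness of $\ahom(T,X^+)$, which is what you are trying to prove. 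The missing ingredient is the degreewise surjectivity of $\ahom(T_i,W')\to\ahom(T_i,Y)$, i.e.\ $\aext^{1}(T_i,Y')=0$; this follows by dimension-shifting once one knows $\aext^{\geq 1}(\catw,\catw)=0$ (which gives $\aext^{\geq 1}(\catw,\finrescatw)=0$), and it is exactly this hypothesis that is in force wherever the paper invokes this part of the Fact (Proposition~\ref{vanish01} assumes $\catw\perp\catw$, and $\catpc(R)\perp\catpc(R)$, $\catic(R)\perp\catic(R)$ hold by Fact~\ref{disc01}). It is not a consequence of total acyclicity alone, so you must either add this orthogonality to your induction or otherwise produce the needed surjectivity; with it, the long exact sequence in homology finishes the step. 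The dual claim that complete resolutions are $\ahom(\fincorescatw,-)$-exact requires the dual vanishing $\aext^{1}(\text{cosyzygy},T_i)=0$ in the same way.
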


\begin{fact} \label{sgcsummary}
Assume that $\catw\perp\catw$.  
We have $\opg(\catw)\perp\finrescatw$ and $\fincorescatw \perp \opg(\catw)$.  
The category
$\opg(\catw)$ is  exact, and $\catw$ is both an injective cogenerator and
a projective generator for $\opg(\catw)$.
If $\catw$ is closed under kernels of epimorphisms
or under cokernels of monomorphisms, then so is
$\opg(\catw)$.
See~\cite[(4.3),(4.5),(4.7),(4.11),(4.12)]{sather:sgc}.
\end{fact}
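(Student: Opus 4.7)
The plan is to treat the four conclusions of the Fact separately, leveraging Fact~\ref{gtriv01} and only the hypothesis $\catw\perp\catw$.

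First I would handle the Ext-vanishing $\opg(\catw)\perp\finrescatw$. Fix $M\in\opg(\catw)$ with complete $\catw$-resolution $W$, so $M\cong\coker(\partial_1^W)$. The short exact sequences $0\to K_{i+1}\to W_i\to K_i\to 0$ (with $K_0=M$) extracted from $W_{\geq 0}$ exhibit each syzygy $K_i$ as a cokernel in the shifted complex $\shift^iW$, so $K_i\in\opg(\catw)$. For $W'\in\catw$, the sequence $\ahom(W_0,W')\to\ahom(K_1,W')\to \aext^1(M,W')\to\aext^1(W_0,W')$ has vanishing right term by $\catw\perp\catw$ and surjective left arrow because $\ahom(W,W')$ is exact (part of total acyclicity). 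Thus $\aext^1(M,W')=0$, and iterating with $K_i$ in place of $M$ gives $\aext^{\geq1}(M,W')=0$. An induction on $\wpd(N)$ using the short exact sequence ending a bounded $\catw$-resolution of $N$ then upgrades this to $\aext^{\geq1}(M,N)=0$ for all $N\in\finrescatw$. The statement $\fincorescatw\perp\opg(\catw)$ is entirely dual, using $\ahom(-,\catw)$-exactness of $W$ instead.

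Next I would establish the cogenerator and generator properties. Given $M\in\opg(\catw)$ with complete resolution $W$, total acyclicity identifies $M$ with $\ker(\partial_{-1}^W)\subseteq W_{-1}$, producing an exact sequence
\[
0\to M\to W_{-1}\to \coker(\partial_0^W)\to 0,
\]
and the cokernel is again in $\opg(\catw)$ (it is the relevant cokernel in $\shift^{-1}W$); combined with $\opg(\catw)\perp\catw$ from the previous step, this shows $\catw$ is an injective cogenerator. Dually, the sequence $0\to\coker(\partial_2^W)\to W_0\to M\to 0$ together with $\catw\perp\opg(\catw)$ shows $\catw$ is a projective generator.

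For exactness of $\opg(\catw)$, I would handle extensions and summands in turn. For a short exact sequence $0\to M'\to M\to M''\to 0$ with $M',M''\in\opg(\catw)$, I would apply the Horseshoe Lemma separately to the proper $\catw$-resolutions $W'_{\geq 0},W''_{\geq 0}$ and to the proper $\catw$-coresolutions $W'_{<0},W''_{<0}$ produced by Fact~\ref{gtriv01}; the glueing works because $\catw\perp\catw$ together with the Hom-exactness hypotheses kills the obstructions, producing a $\catw$-complex $W$ with $\coker(\partial_1^W)\cong M$. Total acyclicity is then checked against an arbitrary $W''\in\catw$ via the long exact sequences obtained by applying $\ahom(-,W'')$ and $\ahom(W'',-)$, whose vanishing reduces to the already-established Ext-vanishing for $M'$ and $M''$. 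Closure under summands is obtained by building a complete $\catw$-resolution of $M_1\oplus M_2\oplus M_1\oplus M_2\oplus\cdots$ from that of $M_1\oplus M_2$ and extracting the summand corresponding to $M_1$, or equivalently by verifying directly that the summand of a totally $\catw$-acyclic complex remains totally $\catw$-acyclic, which is immediate from the additive structure. Finally, for the kernel/cokernel closure property, given $0\to N\to M'\to M\to 0$ with $M,M'\in\opg(\catw)$, I would lift the identity on $M$ through a chosen surjection between complete resolutions of $M'$ and $M$ and take degreewise kernels; each kernel lies in $\catw$ by hypothesis, and total acyclicity of the kernel complex is inherited from the short exact sequence of complexes via the snake lemma and the Ext-vanishing established in Step~1. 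The cokernel case is dual.

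The main obstacle is the extension-closure step: carefully verifying that the spliced Horseshoe resolution/coresolution is simultaneously exact and $\ahom(\catw,-)$- and $\ahom(-,\catw)$-exact requires propagating the hypothesis $\catw\perp\catw$ through several long exact sequences. Everything else is a bookkeeping consequence of the Ext-vanishing established at the outset.
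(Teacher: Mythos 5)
Your Step 1 (the two perp relations via dimension shifting along the syzygies of a complete resolution), the injective-cogenerator/projective-generator statements, and the extension-closure argument (horseshoe on both sides of $0\to M'\to M\to M''\to 0$, splicing, and a degreewise split exact sequence of complexes feeding a long-exact-sequence argument) are sound in outline; note that the paper itself offers no proof here, deferring entirely to \cite{sather:sgc}, and the delicate points are exactly the two remaining closure properties, where your sketch has genuine gaps. For closure under direct summands: the Eilenberg/Benson--Goodearl swindle with $M_1\oplus M_2\oplus M_1\oplus\cdots$ needs countable coproducts, which need not exist in the abelian category $\cata$; even when they do, $\catw$ need not be closed under them, and $\ahom(U,-)$-exactness of an infinite direct sum of totally $\catw$-acyclic complexes requires a smallness hypothesis on $U$ (this is salvageable for $\catpc(R)$ because $C$ is finitely presented, but the Fact is stated for arbitrary $\catw$ with only $\catw\perp\catw$). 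Your proposed alternative is a non sequitur: a direct summand $M_1$ of $M\in\opg(\catw)$ does not come with a direct-summand subcomplex of the complete resolution of $M$ --- the idempotent of $M$ lifts only to a homotopy idempotent of $T$, whose splitting again needs coproducts --- and since $\catw$ is not assumed closed under summands, even a genuine summand subcomplex would not obviously have terms in $\catw$. The proof requires constructing a complete resolution of $M_1$ from scratch, by iterated hulls/approximations through extensions of objects of $\opg(\catw)$ by summands of such, using the perp conditions of your Step 1.

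For closure under kernels of epimorphisms: a chain map $T'\to T$ lifting the epimorphism $M'\to M$ (this is presumably what you mean by ``lift the identity on $M$'') need not be degreewise surjective. The standard fix --- replace $T'$ by $T'\oplus\shift^{-1}\cone(\id_T)$ --- makes the lift degreewise split surjective, but it changes $\coker(\partial_1)$ from $M'$ to $M'\oplus W^*$ with $W^*\in\catw$; your kernel complex $K$ is then indeed totally $\catw$-acyclic with terms in $\catw$, but $\coker(\partial_1^K)$ is an extension $0\to N\to N'\to W^*\to 0$, not $N$ itself. Descending from $N'\in\opg(\catw)$ to $N$ requires $\aext^1(W^*,N)=0$, i.e.\ $\catw\perp N$, equivalently that $0\to N\to M'\to M\to 0$ is $\ahom(\catw,-)$-exact --- and this is precisely where the hypothesis that $\catw$ is closed under kernels of epimorphisms does its real work (for instance, pull back along $T_0\onto M$ to reduce to a quotient lying in $\catw$, and then lift maps $U\to W$ through $M'$ using that the kernel of the composite epimorphism from a term of the complete resolution lies in $\catw$, together with $\catw\perp\catw$). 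Your sketch uses that hypothesis only to place the degreewise kernels in $\catw$, which is not enough, so this step as written does not close; the cokernels-of-monomorphisms case has the dual gap.
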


\section{Semidualizing Modules and Associated Categories} \label{sec8}

Much of the motivation for this work comes from the  module categories
discussed in this section, wherein $R$ is a commutative ring.  

\begin{defn} \label{notation08}
Let $\catm(R)$ denote the category of $R$-modules.
We  write $\catp(R)$ and $\cati(R)$ for the subcategories of projective 
$R$-modules
and injective $R$-modules.
\end{defn}

The study of semidualizing modules was initiated independently (with different names)
by Foxby~\cite{foxby:gmarm}, Golod~\cite{golod:gdagpi},
and Vasconcelos~\cite{vasconcelos:dtmc}.

\begin{defn} \label{notation08a}
An $R$-module $C$ is \emph{semidualizing} if it satisfies the following:
\begin{enumerate}[\quad(1)]
\item $C$ admits a (possibly unbounded) resolution by finite rank free $R$-modules;
\item The natural homothety map $R\to\Hom_R(C,C)$ is an isomorphism; and
\item $\ext_R^{\geq 1}(C,C)=0$.
\end{enumerate}
A finitely generated projective  $R$-module of rank 1 is semidualizing.
If $R$ is Cohen-Macaulay, then $C$  is \emph{dualizing}
if it is semidualizing and $\id_R(C)$ is finite.
\end{defn}

Over a noetherian ring, the next categories were introduced by 
Foxby~\cite{foxby:gdcmr}
when $C$ is dualizing, and 
by Vasconcelos~\cite[\S 4.4]{vasconcelos:dtmc} for arbitrary $C$, with different notation.
In the non-noetherian setting, see Holm and White~\cite{holm:fear}
and White~\cite{white:gpdrsm}.

\begin{defn} \label{notation08d}
Let $C$ be a
semidualizing $R$-module.  

The \emph{Auslander class} of $C$ is the subcategory $\catac(R)$
of $R$-modules $M$ such that 
\begin{enumerate}[\quad(1)]
\item $\tor^R_{\geq 1}(C,M)=0=\ext_R^{\geq 1}(C,C\otimes_R M)$, and
\item The natural map $M\to\Hom_R(C,C\otimes_R M)$ is an isomorphism.
\end{enumerate}
The \emph{Bass class} of $C$ is the subcategory $\catbc(R)$
of $R$-modules $N$ such that 
\begin{enumerate}[\quad(1)]
\item $\ext_R^{\geq 1}(C,M)=0=\tor^R_{\geq 1}(C,\Hom_R(C,M))$, and 
\item The natural evaluation map $C\otimes_R\Hom_R(C,N)\to N$ is an isomorphism.
\end{enumerate}
\end{defn}

Based on the work of Enochs and Jenda~\cite{enochs:gipm},
the following notions were introduced and studied in this generality by
Holm and J\o rgensen~\cite{holm:smarghd}
and White~\cite{white:gpdrsm}.

\begin{defn} \label{notation08b}
Let $C$ be a semidualizing $R$-module, and set
\begin{align*}
\catpc(R)&=\text{the subcategory of modules $M\cong P\otimes_R C$ where $P$ is $R$-projective}\\
\catic(R)&=\text{the subcategory of modules $N\cong \Hom_R(C,I)$ where $I$ is $R$-injective.}
\end{align*}
Modules in $\catpc(R)$ and $\catic(R)$ are called \emph{$C$-projective}
and \emph{$C$-injective}, respectively.

A \emph{complete $\catp\catpc$-resolution} is a complex $X$ of $R$-modules 
satisfying the following:
\begin{enumerate}[\quad(1)]
\item $X$ is exact and $\Hom_R(-,\catpc(R))$-exact; and
\item $X_i$ is projective for  $i\geq 0$ and $X_i$ is  $C$-projective for $i< 0$.
\end{enumerate}
An $R$-module $M$ is \emph{$\text{G}_C$-projective} if there
exists a complete $\catp\catpc$-resolution $X$ such that $M\cong\coker(\partial^X_1)$,
and $X$ is a \emph{complete $\catp\catpc$-resolution of $M$}.  Set
$$\catgpc(R)=\text{the subcategory of $\text{G}_C$-projective $R$-modules}.$$
In the case $C=R$ we use the more common terminology
``complete projective resolution'' and ``Gorenstein projective module''
and the notation $\catgp(R)$.

A \emph{complete $\catic\cati$-coresolution}
is a complex $Y$ of $R$-modules 
such that:
\begin{enumerate}[\quad(1)]
\item $Y$ is exact and $\Hom_R(\catic(R),-)$-exact; and
\item $Y_i$ is injective for  $i\leq 0$ and $Y_i$ is  $C$-injective for $i>0$.
\end{enumerate}
An $R$-module $N$ is \emph{$\text{G}_C$-injective} if there
exists a complete $\catic\cati$-coresolution $Y$ such that $N\cong\Ker(\partial^Y_0)$,
and $Y$ is a \emph{complete $\catic\cati$-coresolution of $N$}.
Set
$$\catgic(R)=\text{the subcategory of $\text{G}_C$-injective $R$-modules}.$$
In the case $C=R$ we use the more common terminology
``complete injective resolution'' and ``Gorenstein injective module''
and the notation $\catgi(R)$.
\end{defn}

\begin{notation} \label{notn1}
Let $C$ be a
semidualizing $R$-module.  
We  abbreviate as follows:
\begin{align*}
\pd_R(-)
&=\catp(R)\text{-}\pd(-) 
&
\id_R(-)
&=\cati(R)\text{-}\id(-) 
\\
\catpc\text{-}\pd_R(-)
&=\catpc(R)\text{-}\pd(-) 
&
\catic\text{-}\id_R(-)
&=\catic(R)\text{-}\id(-) 
\\
\opg\catp\text{-}\pd_R(-)
&=\opg\catp(R)\text{-}\pd(-) 
&
\opg\cati\text{-}\id_R(-)
&=\opg\cati(R)\text{-}\id(-)
\\
\opg\catpc\text{-}\pd_R(-)
&=\opg\catpc(R)\text{-}\pd(-) 
&
\opg\catic\text{-}\id_R(-)
&=\opg\catic(R)\text{-}\id(-)
\\
\opg(\catpc)\text{-}\pd_R(-)
&=\opg(\catpc(R))\text{-}\pd(-) 
&
\opg(\catic)\text{-}\id_R(-)
&=\opg(\catic(R))\text{-}\id(-). 
\end{align*}
\end{notation}

\begin{fact} \label{disc01}
Let $B$ and $C$ be 
semidualizing $R$-modules.  
The Auslander class $\catac(R)$ contains
every projective $R$-module and every $C$-injective $R$-module,
and the Bass class $\catbc(R)$ contains
every injective $R$-module and every $C$-projective $R$-module;
see~\cite[Lems.\ 4.1, 5.1]{holm:fear}.
These classes also satisfy the two-of-three property
by~\cite[Cor.\ 6.3]{holm:fear}.
Hence, $\catac(R)$ contains the
$R$-modules of finite projective dimension and the $R$-modules of
finite $\cati_C$-injective dimension, and
$\catbc(R)$ contains
the
$R$-modules of finite injective dimension and the $R$-modules of
finite $\catp_C$-projective dimension.
From~\cite[(2.8)]{takahashi:hasm} we know that an $R$-module $M$
is in $\catbc(R)$ if and only if $\Hom_R(C,M)\in\catac(R)$,
and that $M\in\catac(R)$ if and only if $C\otimes_RM\in\catbc(R)$.

The category $\catpc(R)$ is exact and closed under 
kernels of epimorphisms by~\cite[Prop.\ 5.1(b)]{holm:fear}
and~\cite[(2.8)]{white:gpdrsm}.
Also, $\catpc(R)$ is an injective cogenerator and
a projective generator for $\opg(\catpc(R))=\catgpc(R)\cap\catbc(R)$,
and $\opg(\catpc(R))$ is  exact and closed under kernels of epimorphisms; 
see~\cite[Secs.~4--5]{sather:sgc}.
In particular,  $\catpc(R)\perp\catpc(R)$.

The category $\catic(R)$ is exact and closed under
cokernels of monomorphisms.
Also,  $\catic(R)$ is an injective cogenerator and
a projective generator for $\opg(\catic(R))=\catgic(R)\cap\catac(R)$,
and $\opg(\catic(R))$ is  exact and closed under cokernels of monomorphisms; 
see~\cite[Secs.~4--5]{sather:sgc}.
In particular, we have $\catic(R)\perp\catic(R)$.

If $B\in\catgpc(R)$, then 
$\Hom_R(B,C)$ is also semidualizing; see, e.g. \cite[(2.11)]{christensen:scatac}.
If  $C$ is dualizing, then $B\in\catgpc(R)$ and $C\cong B\otimes_R\hom_R(B,C)$; 
see~\cite[(3.3.10)]{bruns:cmr},
\cite[(3.3)]{gerko:sdc} and~\cite[(4.4)]{white:gpdrsm}.
\end{fact}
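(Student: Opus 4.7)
The plan is to verify each clause either by direct computation from the definitions or by appealing to the cited literature; the essential tools are standard adjunction identities for $C\otimes_R-$ and $\Hom_R(C,-)$, long exact sequences in $\tor$ and $\ext$, and the description of $\opg(\catw)$ via totally $\catw$-acyclic complexes from Definition~\ref{defG}. First I would establish the Auslander and Bass class containments: for projective $P$, flatness gives $\tor^R_{\geq 1}(C,P)=0$; the vanishing $\ext_R^{\geq 1}(C,C)=0$ passes through direct sums and summands to force $\ext_R^{\geq 1}(C,C\otimes_R P)=0$; and $P\to\Hom_R(C,C\otimes_R P)$ is an iso by additivity from $R\cong\Hom_R(C,C)$. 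The same template, dualized and using the adjunction $\Hom_R(C,-)\dashv C\otimes_R-$, handles the three remaining initial containments. The two-of-three property then drops out of a five-lemma chase applied to the long exact sequences of $\tor^R(C,-)$, $\ext_R(C,-)$, and $\ext_R(C,C\otimes_R-)$ attached to a short exact sequence; the finite-dimension consequences follow by induction on resolution length, and the characterizations $M\in\catbc(R)\iff\Hom_R(C,M)\in\catac(R)$ and $M\in\catac(R)\iff C\otimes_R M\in\catbc(R)$ repackage the same adjunction bookkeeping.

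For $\catpc(R)$ and $\catic(R)$ I would observe that $-\otimes_R C$ and $\Hom_R(C,-)$ restrict to quasi-inverse equivalences $\catp(R)\leftrightarrow\catpc(R)$ and $\cati(R)\leftrightarrow\catic(R)$ — a direct consequence of the Auslander/Bass calculations above — so that exactness and the stated closure properties transfer from $\catp(R)$ and $\cati(R)$. Orthogonality $\catpc(R)\perp\catpc(R)$ is a standard derived-adjunction computation: $Q\otimes_R C\in\catbc(R)$ forces $\ext_R^{\geq 1}(C,Q\otimes_R C)=0$, and then Hom-tensor adjunction reduces $\ext_R^{\geq 1}(P\otimes_R C,Q\otimes_R C)$ to $\ext_R^{\geq 1}(P,Q)=0$ for projective $P$; the dual argument yields $\catic(R)\perp\catic(R)$. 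The substantive input I would import from~\cite{white:gpdrsm,sather:sgc} is the identification $\opg(\catpc(R))=\catgpc(R)\cap\catbc(R)$: for $\subseteq$, truncate a totally $\catpc$-acyclic complex and splice on a projective resolution of its first syzygy, observing that every cokernel lies in $\catbc(R)$ by two-of-three; for the reverse inclusion, promote a complete $\catp\catpc$-resolution of a module in $\catbc(R)$ to a totally $\catpc$-acyclic one by applying $C\otimes_R\Hom_R(C,-)$ — which is the identity on $\catbc(R)$ — to its projective half. Once this identification is in place, $\catpc$ being an injective cogenerator and projective generator for $\opg(\catpc)$ follows from $\catpc\perp\catpc$ together with the splicing of Definition~\ref{ab01}, and closure of $\opg(\catpc(R))$ under kernels of epis transfers from $\catpc(R)$ via Fact~\ref{sgcsummary}.

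For the final two assertions, if $B\in\catgpc(R)$ with complete $\catp\catpc$-resolution $X$, then applying $\Hom_R(-,C)$ preserves exactness because $\ext_R^{\geq 1}(X_i,C)=0$ for every $i$ (either $X_i$ is projective, so the vanishing is clear, or $X_i\cong P\otimes_R C$ for some projective $P$ and Hom-tensor adjunction together with $\ext_R^{\geq 1}(C,C)=0$ delivers it); the identifications $\Hom_R(P\otimes_R C,C)\cong\Hom_R(P,R)$ then let one read off the semidualizing axioms for $\Hom_R(B,C)$, which is the content of~\cite[(2.11)]{christensen:scatac}. When $C$ is dualizing I would reduce via the Cohen--Macaulay structure to the finitely generated reflexive case to obtain $B\in\catgpc(R)$ from~\cite[(4.4)]{white:gpdrsm}, and the isomorphism $C\cong B\otimes_R\Hom_R(B,C)$ is then the standard reflexivity statement recorded in~\cite[(3.3.10)]{bruns:cmr} and~\cite[(3.3)]{gerko:sdc}. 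The principal obstacle throughout is the identification $\opg(\catpc(R))=\catgpc(R)\cap\catbc(R)$; every other piece is essentially adjunction and a diagram chase.
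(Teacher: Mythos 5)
The item you were asked to prove is a \texttt{fact} environment: in the paper it carries no proof at all, only pointers to \cite{holm:fear}, \cite{takahashi:hasm}, \cite{white:gpdrsm}, \cite{sather:sgc}, \cite{christensen:scatac}, \cite{bruns:cmr}, and \cite{gerko:sdc}. So there is no ``paper's proof'' to compare against; your proposal, which reconstructs arguments for the cited results rather than simply citing them, is by design taking a different route. Most of your sketches are sound and do reflect how those sources actually argue: the Auslander/Bass containments via flatness, additivity, and the fact that $\ext_R(C,-)$ commutes with direct sums (because $C$ has a degreewise finite free resolution); the two-of-three property via the three long exact sequences and the five lemma; the equivalences $\catp(R)\leftrightarrow\catpc(R)$ and $\cati(R)\leftrightarrow\catic(R)$ transporting exactness and the closure properties; the orthogonality $\catpc(R)\perp\catpc(R)$ by Hom-tensor adjunction reducing to $\ext_R^{\geq 1}(C,Q\otimes_R C)=0$; and the $\Hom_R(-,C)$ argument for the final two clauses.

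The one place where your sketch goes wrong is the inclusion $\catgpc(R)\cap\catbc(R)\subseteq\opg(\catpc(R))$. You propose to ``promote a complete $\catp\catpc$-resolution \dots\ to a totally $\catpc$-acyclic one by applying $C\otimes_R\Hom_R(C,-)$ \dots\ to its projective half.'' This does not produce $\catpc$-modules: for a projective $P$, the module $\Hom_R(C,P)$ is not projective in general (projectives land in $\catac(R)$, not $\catbc(R)$, so $C\otimes_R\Hom_R(C,P)\not\cong P$), and $C\otimes_R\Hom_R(C,P)$ has no reason to be of the form $Q\otimes_R C$ with $Q$ projective. The construction actually used in \cite{white:gpdrsm} and \cite{sather:sgc} is different: since $M\in\catbc(R)$, it admits a proper $\catpc(R)$-resolution $W'$ by \cite[(2.3)]{takahashi:hasm}; one splices $W'$ with the right half $X_{<0}$ of a complete $\catp\catpc$-resolution $X$ of $M$ and then verifies that the spliced complex is exact and both $\Hom_R(\catpc,-)$- and $\Hom_R(-,\catpc)$-exact, using $\catpc\perp\catpc$ and the two-of-three property to control the cokernels. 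You do flag this identification as ``the principal obstacle'' and ultimately defer to the cited sources, so the error is confined to the heuristic you offer for it; but as written, that heuristic is incorrect and should be replaced by the splice-and-verify argument (or by appealing to Lemma~\ref{lem1004}\eqref{lem1004a}, which the paper proves immediately afterward and which gives the characterization $M\in\opg(\catpc(R))\iff\Hom_R(C,M)\in\opg\catp(R)\cap\catac(R)$).

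Your $\subseteq$ sketch, by contrast, is fine once one notes that the required $\Hom_R(-,\catpc)$-exactness of the spliced complex follows from $\opg(\catpc)\perp\finrescat{\catpc}$ (Fact~\ref{sgcsummary}), which gives $\ext^{\geq 1}_R(M,Q\otimes_R C)=0$ and hence the vanishing of the relevant $\ext$ groups at all the syzygies and cosyzygies.
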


The next lemma is from an early version of~\cite{takahashi:hasm}.
We are grateful to Takahashi and White for allowing us to include it here.
\newcommand{\cd}{C^{\dagger}}

\begin{lem} \label{lemold}
Let $R$ be a commutative ring, and let $C$ be a semidualizing $R$-module.
Assume that $R$ is Cohen-Macaulay with a dualizing module $D$,
and set $\cd=\hom_R(C,D)$. 
For each $R$-module $M$, one has
$\catpc\text{-}\pd_R(M)<\infty$
if and only if $\cati_{\cd}\text{-}\id_R(M)<\infty$.
\end{lem}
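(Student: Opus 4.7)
The plan is to prove both conditions are equivalent to the single intermediate condition that $\id_R(C^\dagger\otimes_R M)<\infty$ together with a Bass/Auslander class membership.

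First, I would establish two characterizations analogous to Foxby equivalence, both of which follow from standard manipulations with resolutions. Specifically, if $\catpc\text{-}\pd_R(M)<\infty$ and $P_\bullet\otimes_R C\to M$ is a bounded $\catpc$-resolution, then applying $\Hom_R(C,-)$ produces a bounded projective resolution of $\Hom_R(C,M)$ (using $P_i\in\catac(R)$ so $\Hom_R(C,P_i\otimes C)\cong P_i$, and $\ext^{\geq1}_R(C,P_i\otimes C)=0$). Conversely, if $M\in\catbc(R)$ and $\Hom_R(C,M)$ has finite projective dimension, tensoring a bounded projective resolution of $\Hom_R(C,M)$ by $C$ yields a bounded $\catpc$-resolution of $M$, exactness being ensured by $\tor^R_{\geq 1}(C,\Hom_R(C,M))=0$. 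The conclusion is that $\catpc\text{-}\pd_R(M)<\infty$ iff $M\in\catbc(R)$ and $\pd_R(\Hom_R(C,M))<\infty$. A dual argument with $C^\dagger$-injective coresolutions gives: $\cati_{C^\dagger}\text{-}\id_R(M)<\infty$ iff $M\in\cata_{C^\dagger}(R)$ and $\id_R(C^\dagger\otimes_R M)<\infty$.

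Second, I would connect the projective and injective sides using the classical duality over a Cohen-Macaulay ring with dualizing module $D$: for any $R$-module $N$ lying in the appropriate class, $\pd_R(N)<\infty$ if and only if $\id_R(D\otimes_R N)<\infty$. Combined with the identification $D\otimes_R\Hom_R(C,M)\cong C^\dagger\otimes_R M$ valid for $M\in\catbc(R)$ (which uses $D\cong C\otimes_R C^\dagger$, a consequence of $D\in\catbc(R)$ from Fact~\ref{disc01}, together with $\tor^R_{\geq1}(C,\Hom_R(C,M))=0$ and the Bass-class isomorphism $C\otimes_R\Hom_R(C,M)\cong M$), we can rewrite $\pd_R(\Hom_R(C,M))<\infty$ as $\id_R(C^\dagger\otimes_R M)<\infty$.

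Third, I would dispose of the class memberships: I need to know that, under these finiteness hypotheses, $M\in\catbc(R)$ is equivalent to $M\in\cata_{C^\dagger}(R)$. Since $C^\dagger$ is semidualizing (Fact~\ref{disc01}) with $C^{\dagger\dagger}\cong C$ and $C\otimes_R C^\dagger\cong D$, one can transfer Tor/Ext vanishing and the canonical biduality maps between the two classes; the finiteness of $\id_R(C^\dagger\otimes_R M)$ also forces the needed vanishings by standard derived-category arguments with $D$-duality. Assembling these three steps yields the equivalence.

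The main obstacle will be the class comparison $\catbc(R)\leftrightarrow\cata_{C^\dagger}(R)$ in the third step: the definitions are not symmetric, and one must carefully use $C\otimes_R C^\dagger\cong D$, biduality, and the fact that the dualizing module forces good behavior on modules of finite (co)resolution length. Everything else is a clean application of Foxby equivalence and the classical pd/id duality via $D$.
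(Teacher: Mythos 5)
Your proposal is correct in outline but follows a genuinely different, heavier route than the paper. The paper's proof is remarkably short: it invokes the two-of-three property for the class $\operatorname{cores}\widehat{\cati_{C^\dagger}(R)}$ (from \cite[(3.4)]{takahashi:hasm}) to reduce by induction to the base case $M\cong C\otimes_R P$ with $P$ projective, and then finishes in one line with the computation
\[
\cati_{\cd}\text{-}\id_R(M)=\id_R(\cd\otimes_R M)=\id_R(\cd\otimes_R C\otimes_R P)=\id_R(D\otimes_R P)<\infty,
\]
using $D\cong C\otimes_R C^\dagger$ and $\id_R(D)<\infty$. In contrast, you establish full Foxby-equivalence characterizations of both finiteness conditions and then splice them together with $D$-duality; this is a coherent strategy and the individual pieces are essentially standard, but it carries more moving parts than necessary.

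Two places in your sketch deserve more care than you give them. First, in the step ``$\pd_R(N)<\infty$ iff $\id_R(D\otimes_R N)<\infty$'' you are invoking Foxby equivalence, but that naturally relates \emph{flat} dimension (on the Auslander-class side) to injective dimension (on the Bass-class side); since $M$ is an arbitrary module, not assumed finitely generated, you would need to upgrade finite flat dimension to finite projective dimension, which holds here only because a ring admitting a dualizing module has finite Krull dimension (Raynaud--Gruson/Jensen). Second, the class comparison $\catbc(R)\leftrightarrow\cata_{C^\dagger}(R)$, which you correctly flag as the main obstacle, is left unresolved in your sketch; over a ring with dualizing module one does have $\catbc(R)=\cata_{C^\dagger}(R)$, but this is a nontrivial theorem rather than a formal consequence of $C\otimes_R C^\dagger\cong D$. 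The paper sidesteps both of these issues entirely: after the two-of-three reduction, the relevant module is in $\catpc(R)$, where the needed class membership and the relation $\cati_{\cd}\text{-}\id_R(M)=\id_R(\cd\otimes_R M)$ (taken from \cite[(2.11.b)]{takahashi:hasm}) are immediate. If you want to salvage your approach, you should either cite the $\catbc=\cata_{C^\dagger}$ identity explicitly or mimic the paper's induction to avoid needing it.
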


\begin{proof}
We prove the forward implication; the proof of the converse is similar.
Assume that $n=\catpc\text{-}\pd_R(M)<\infty$.
The category of modules of finite $\cati_{\cd}\text{-}\id$
satisfies the two-of-three property by~\cite[(3.4)]{takahashi:hasm}.
Hence, using a routine induction argument on $n$,
it suffices to assume that $n=0$ and prove that $\cati_{\cd}\text{-}\id_R(M)<\infty$.

So, we assume that there is a projective $R$-module $P$ such that
$M\cong C\otimes_RP$. 
This yields the second equality in the next sequence
$$
\cati_{\cd}\text{-}\id_R(M)
=\id_R(\cd\otimes_RM)
=\id_R(\cd\otimes_RC\otimes_RP)
=\id_R(D\otimes_RP)<\infty.
$$
The first equality is from~\cite[(2.11.b)]{takahashi:hasm},
and the third equality is from Fact~\ref{disc01}. 
The finiteness follows from the fact that $\id_R(D)$ is finite and $P$ is projective.
\end{proof}

The next three lemmas are for use in 
Corollary~\ref{holm1}.

\begin{lem} \label{lem1004}
Let $R$ be a commutative  ring, and let $C$ be a semidualizing $R$-module.
Let $M$ be an $R$-module.
\begin{enumerate}[\quad\rm(a)]
\item \label{lem1004a}
$M$ is in $\opg(\catpc(R))$ if and only if
$\Hom_R(C,M)$ is in $\opg\catp(R)\cap\catac(R)$.
\item \label{lem1004b}
$M$ is in $\opg(\catic(R))$ if and only if
$C\otimes_RM$ is in $\opg\cati(R)\cap\catbc(R)$.
\end{enumerate}
\end{lem}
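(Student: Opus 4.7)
The plan is to prove (a) in detail; part (b) follows by the dual argument, interchanging the roles of $\catac(R)$ and $\catbc(R)$, tensor and Hom, projective and injective. By Fact~\ref{disc01} we have $\opg(\catpc(R)) = \catgpc(R)\cap\catbc(R)$, and $M\in\catbc(R)$ if and only if $\Hom_R(C,M)\in\catac(R)$. So (a) reduces to showing: when $M\in\catbc(R)$, the module $M$ admits a totally $\catpc(R)$-acyclic complex if and only if $\Hom_R(C,M)$ admits a totally $\catp(R)$-acyclic complex. The essential tool is Foxby equivalence: $\Hom_R(C,-)$ and $C\otimes_R-$ are quasi-inverse functors between $\catac(R)$ and $\catbc(R)$, and they restrict to an equivalence between $\catp(R)$ and $\catpc(R)$ via $\Hom_R(C,C\otimes_R P)\cong P$ for projective $P$.

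For the forward direction, suppose $X$ is a totally $\catpc(R)$-acyclic complex with $M\cong\coker(\partial_1^X)$, and write $X_i\cong C\otimes_R P_i$ for projectives $P_i$. I would apply $\Hom_R(C,-)$ and identify the resulting complex with $P_\bullet$, using that $P_i\in\catac(R)$. Three checks are required: exactness of $\Hom_R(C,X)$ follows because $C\in\catpc(R)$ and $X$ is $\Hom_R(\catpc(R),-)$-exact; the $\Hom_R(-,\catp(R))$-exactness of $P_\bullet$ follows from $\Hom_R(-,\catpc(R))$-exactness of $X$ via the adjunction $\Hom_R(C\otimes_R P_i,C\otimes_R Q)\cong\Hom_R(P_i,Q)$ for projective $Q$; and $\coker(\partial_1^{\Hom_R(C,X)})\cong\Hom_R(C,M)$ by left exactness of $\Hom_R(C,-)$ together with the vanishing $\ext_R^1(C,\ker\partial_0^X)=0$, which holds because the syzygy $\ker\partial_0^X$ lies in $\opg(\catpc(R))\subseteq\catbc(R)$.

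For the reverse direction, suppose $P_\bullet$ is a totally $\catp(R)$-acyclic complex with $\Hom_R(C,M)\cong\coker(\partial_1^P)$. Apply $C\otimes_R-$. The two-of-three property of $\catac(R)$ (Fact~\ref{disc01}), applied inductively along the short exact sequences $0\to K_{i+1}\to P_i\to K_i\to 0$ of syzygies of $P_\bullet$ starting from $K_0=\Hom_R(C,M)\in\catac(R)$, forces every $K_i\in\catac(R)$, hence $\tor_{\geq1}^R(C,K_i)=0$ and $C\otimes_R P_\bullet$ is exact. The $\Hom_R(\catpc(R),-)$- and $\Hom_R(-,\catpc(R))$-exactness conditions for $C\otimes_R P_\bullet$ transfer back to the corresponding conditions on $P_\bullet$ via the same adjunction. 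Finally, since $M\in\catbc(R)$, the evaluation map yields $M\cong C\otimes_R\Hom_R(C,M)\cong\coker(\partial_1^{C\otimes_R P_\bullet})$. The main obstacle is tracking that all syzygies of each complete resolution stay in the appropriate Foxby class, as this is what permits $\Hom_R(C,-)$ and $C\otimes_R-$ to act exactly throughout; this is handled uniformly by the two-of-three property once the single distinguished module ($M$ or $\Hom_R(C,M)$) is known to lie in $\catbc(R)$ or $\catac(R)$.
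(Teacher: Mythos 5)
Your proof is correct and takes essentially the same approach as the paper's: the paper proves part (b) in detail and declares (a) dual, whereas you prove (a) and declare (b) dual, but the underlying argument — applying the Foxby equivalence functor to a complete resolution, using Hom-tensor adjointness to transfer the two exactness conditions, and using the two-of-three property of the Auslander/Bass classes to keep all syzygies in the appropriate Foxby class so that $\Hom_R(C,-)$ and $C\otimes_R-$ act exactly — is the same. One small imprecision: in your forward direction the identification $\coker(\partial_1^{\Hom_R(C,X)})\cong\Hom_R(C,M)$ uses the already-established exactness of $\Hom_R(C,X)$ (so that the image of $\partial_1^{\Hom_R(C,X)}$ equals $\Hom_R(C,\ker\partial_0^X)$ by left exactness) together with $\ext_R^1(C,\ker\partial_0^X)=0$; you do cite both ingredients, so the step is fine, but a reader might otherwise expect you to also invoke $\ext_R^1(C,\ker\partial_1^X)=0$, which the exactness argument makes unnecessary.
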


\begin{proof}
We prove part~\eqref{lem1004b}; the proof of~\eqref{lem1004a} is dual.

Assume first that $M\in\opg(\catic(R))$, and fix a complete $\catic$-resolution $Y$ of $M$.
Fact~\ref{disc01} implies that $M\in\catac(R)$, and 
that $C\otimes_RM\in\catbc(R)$.
Since each module $Y_i$ is in $\catic(R)$, it is straightforward to show that
the complex $C\otimes_RY$ is a complex of injective $R$-modules;
see, e.g., \cite[Thm.\ 1]{holm:fear}. Also, since the modules $M\cong\ker(\partial^Y_0)$ 
and $Y_i$ are 
all in $\catac(R)$, it is straightforward to show that $C\otimes_RY$ is exact
and that $C\otimes_RM\cong\ker(\partial^{C\otimes_RY}_0)$.

By assumption, the complex $Y$ is $\Hom_R(\catic(R),-)$-exact. Hence, for each
injective $R$-module $I$, the following complex is exact
\begin{align*}
\Hom_R(\Hom_R(C,I),Y)
&\cong\Hom_R(\Hom_R(C,I),\Hom_R(C,C\otimes_RY))
\\
&\cong\Hom_R(C\otimes_R\Hom_R(C,I),C\otimes_RY)
\\
&\cong\Hom_R(I,C\otimes_RY).
\end{align*}
In this sequence, the first isomorphism comes from the fact that each $Y_i$ is in
$\catac(R)$. The second isomorphism is Hom-tensor adjointness,
and the third isomorphism is due to the condition $I\in\catbc(R)$.
It follows that $C\otimes_RY$ is a complete injective resolution of $C\otimes_RM$,
so we have $C\otimes_RM\in\opg\cati(R)$.

For the converse, assume that  $C\otimes_RM\in\opg\cati(R)\cap\catbc(R)$.
Fact~\ref{disc01} implies that $M\in\catac(R)$.
Let $Z$ be a complete injective resolution of $C\otimes_RM$.
Since the modules $C\otimes_RM$ and $Z_i$ are in $\catbc(R)$, we conclude that the complex
$\Hom_R(C,Z)$ is exact with
$M\cong\ker(\partial^{\Hom_R(C,Z)}_0)$. 
Thus, to conclude the proof, we need to show that
$\Hom_R(C,Z)$ is $\Hom_R(\catic(R),-)$-exact and $\Hom_R(-,\catic(R))$-exact.
Let $J$ be an injective $R$-module. 
Since $Z$ is $\Hom_R(\cati(R),-)$-exact, the next complex is exact
\begin{align*}
\Hom_R(J,Z)
&\cong\Hom_R(C\otimes_R\Hom_R(C,J),Z) \\
&\cong\hom_R(\Hom_R(C,J),\Hom_R(C,Z)).
\end{align*}
The isomorphisms are from the condition $J\in\catbc(R)$
and from Hom-tensor adjointness, respectively.
Thus,
$\Hom_R(C,Z)$ is $\Hom_R(\catic(R),-)$-exact.

The complex $\Hom_R(C,Z)$ consists of modules in $\catic(R)\subseteq\catac(R)$
and has $\ker(\partial^{\Hom_R(C,Z)}_0)\cong \Hom_R(C,M)\in\catac(R)$.
It follows  that $C\otimes_R\Hom_R(C,Z)$ is exact.
The fact that $J$ is injective implies that the next complex is exact
\begin{align*}
\Hom_R(C\otimes_R\Hom_R(C,Z),J)
&\cong\Hom_R(\Hom_R(C,Z),\Hom_R(C,J))
\end{align*}
where the isomorphism is from Hom-tensor adjointness.
It follows that 
$\Hom_R(C,Z)$ is  $\Hom_R(-,\catic(R))$-exact,
as
desired.
\end{proof}

The next two results improve upon Lemma~\ref{lem1004}
and compliment~\cite[(4.2), (4.3)]{holm:smarghd}.
The proof of Lemma~\ref{lem0701'} is dual to that of~\ref{lem0701}. 

\begin{lem} \label{lem0701}
Let $R$ be a commutative ring, and 
let $C$ be a semidualizing $R$-module. For an $R$-module $M$,
the following conditions are equivalent:
\begin{enumerate}[\quad\rm(i)]
\item \label{lem0701i}
$\opg(\catpc)\text{-}\pd_R(M)<\infty$;
\item \label{lem0701ii}
$\opg\catpc\text{-}\pd_R(M)<\infty$ and $M\in\catbc(R)$; and
\item \label{lem0701iii}
$\gpd_R(\hom_R(C,M))<\infty$ and $M\in\catbc(R)$.
\end{enumerate}
When these conditions are satisfied, we have
\begin{equation}\label{lem0701c}
\opg(\catpc)\text{-}\pd_R(M)
=\opg\catpc\text{-}\pd_R(M)
=\gpd_R(\hom_R(C,M)).
\end{equation}
\end{lem}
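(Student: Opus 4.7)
The plan is to prove the implication cycle (i)$\Rightarrow$(ii), (i)$\Rightarrow$(iii), (iii)$\Rightarrow$(i), (ii)$\Rightarrow$(i), while tracking numerical bounds to establish the equalities in~\eqref{lem0701c}. Throughout I use the identity $\opg(\catpc(R))=\catgpc(R)\cap\catbc(R)$ and the two-of-three property of $\catbc(R)$ from Fact~\ref{disc01}, together with the characterization $M\in\opg(\catpc(R))\iff\hom_R(C,M)\in\opg\catp(R)\cap\catac(R)$ from Lemma~\ref{lem1004}\eqref{lem1004a}.

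For (i)$\Rightarrow$(ii) and (i)$\Rightarrow$(iii), set $d=\opg(\catpc)\text{-}\pd_R(M)$ and fix a resolution $0\to G_d\to\cdots\to G_0\to M\to 0$ with each $G_i\in\opg(\catpc(R))$. The inclusion $\opg(\catpc(R))\subseteq\catgpc(R)$ gives $\opg\catpc\text{-}\pd_R(M)\leq d$, and iterating two-of-three on the short exact sequences extracted from this resolution yields $M\in\catbc(R)$. Applying $\hom_R(C,-)$, the containments $G_i,M\in\catbc(R)$ force the vanishing of $\ext_R^{\geq 1}(C,G_i)$ and $\ext_R^{\geq 1}(C,M)$, so the augmented complex remains exact; Lemma~\ref{lem1004}\eqref{lem1004a} identifies each $\hom_R(C,G_i)$ as Gorenstein projective, giving $\gpd_R(\hom_R(C,M))\leq d$.

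For (iii)$\Rightarrow$(i), set $n=\gpd_R(\hom_R(C,M))$; Fact~\ref{disc01} gives $\hom_R(C,M)\in\catac(R)$ and $M\cong C\otimes_R\hom_R(C,M)$. Choose a projective resolution $P\to\hom_R(C,M)$ and let $K=\ker(P_{n-1}\to P_{n-2})$. Then $K$ is Gorenstein projective (since $\gpd_R(\hom_R(C,M))=n$), and iterated two-of-three for $\catac(R)$ forces $K\in\catac(R)$. Because $\tor_{\geq 1}^R(C,-)$ vanishes on $\catac(R)$, tensoring the truncated resolution $0\to K\to P_{n-1}\to\cdots\to P_0\to\hom_R(C,M)\to 0$ with $C$ preserves exactness and terminates at $M$. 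Applying Lemma~\ref{lem1004}\eqref{lem1004a} via $\hom_R(C,C\otimes_R K)\cong K\in\opg\catp(R)\cap\catac(R)$ places $C\otimes_R K$ in $\opg(\catpc(R))$; the other terms $C\otimes_R P_i$ lie in $\catpc(R)\subseteq\opg(\catpc(R))$. Hence $\opg(\catpc)\text{-}\pd_R(M)\leq n$.

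The main obstacle is (ii)$\Rightarrow$(i). Put $m=\opg\catpc\text{-}\pd_R(M)$, build the $\catpc(R)$-resolution $C\otimes_R P\to M$ from the preceding paragraph (using the projective resolution $P\to\hom_R(C,M)$), and let $K'$ denote its $m$-th syzygy. Two-of-three for $\catbc(R)$ yields $K'\in\catbc(R)$, so the task reduces to showing $K'\in\catgpc(R)$. Since $\catpc(R)\subseteq\catgpc(R)$, the complex $C\otimes_R P$ is itself a $\catgpc(R)$-resolution of $M$, and a Schanuel-type argument---using closure of $\catgpc(R)$ under kernels of epimorphisms between $G_C$-projective modules, obtained by splicing complete $\catp\catpc$-resolutions (cf.~\cite{holm:smarghd,white:gpdrsm})---combined with the hypothesis $\opg\catpc\text{-}\pd_R(M)=m$ forces $K'\in\catgpc(R)$. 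Thus $K'\in\catgpc(R)\cap\catbc(R)=\opg(\catpc(R))$, so $\opg(\catpc)\text{-}\pd_R(M)\leq m$. Combined with the trivial bound $\opg\catpc\text{-}\pd_R(M)\leq\opg(\catpc)\text{-}\pd_R(M)$ and the inequalities derived in the earlier implications, this produces the equality chain~\eqref{lem0701c}.
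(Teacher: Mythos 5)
Your proposal is correct and follows essentially the same route as the paper's proof: the translation through Lemma~\ref{lem1004}\eqref{lem1004a}, the two-of-three property of the Auslander and Bass classes, the identity $\opg(\catpc(R))=\catgpc(R)\cap\catbc(R)$, and the external syzygy results of Holm and Holm--J{\o}rgensen~\cite{holm:ghd,holm:smarghd} are exactly the ingredients the paper uses, with the same tracking of inequalities to get~\eqref{lem0701c}. The differences are only organizational: you prove the implications in a different arrangement and build the proper $\catpc(R)$-resolution explicitly as $C\otimes_R P$ for a projective resolution $P$ of $\hom_R(C,M)$, whereas the paper argues (i)$\implies$(ii)$\implies$(iii)$\implies$(i) and cites~\cite{takahashi:hasm} for the existence of that resolution.
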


\begin{proof}
\eqref{lem0701i}$\implies$\eqref{lem0701ii}.
Assume that $\opg(\catpc)\text{-}\pd_R(M)<\infty$.
Since $\opg(\catpc(R))$ is a subcategory of $\opg\catpc(R)$,
we have 
$\opg\catpc\text{-}\pd_R(M)\leq\opg(\catpc)\text{-}\pd_R(M)<\infty$.
As $\catbc(R)$ satisfies the two-of-three property and contains
$\opg(\catpc(R))$, we have $M\in\catbc(R)$.

\eqref{lem0701ii}$\implies$\eqref{lem0701iii}.
Assume that $g=\opg\catpc\text{-}\pd_R(M)<\infty$ and $M\in\catbc(R)$.
The condition $M\in\catbc(R)$ implies that $M$ has a 
proper $\catpc(R)$-resolution
$T$ by~\cite[(2.3)]{takahashi:hasm}. 
In particular, the complex $\hom_R(C,T^+)$ is exact, and
it follows that $\hom_R(C,T)$ is a projective
resolution of $\hom_R(C,M)$ with
$$\coker(\partial^{\hom_R(C,T)}_{g+1})\cong\hom_R(C,\coker(\partial^{T}_{g+1})).$$
Since each $T_i$ is in
$\catpc(R)\subseteq\opg\catpc(R)$,
the condition $g=\opg\catpc\text{-}\pd_R(M)<\infty$ 
implies that $K_g=\coker(\partial^{T}_{g+1})$ is $\text{G}_C$-projective;
see~\cite[(2.16)]{holm:smarghd} and~\cite[(2.20)]{holm:ghd}.
Since $M$ is in $\catbc(R)$ and each $T_i$ is in $\catbc(R)$,
Fact~\ref{disc01} implies that $K_g\in\catbc(R)$.
It follows that $K_g$ is in $\catgpc(R)\cap\catbc(R)=\opg(\catpc(R))$,
so Lemma~\ref{lem1004}\eqref{lem1004a} implies that
$\hom_R(C,K_g)\in\catgp(R)$.
Hence, the exact sequence
$$0\to \hom_R(C,K_g)\to  \hom_R(C,T_{g-1})\to\cdots\to  \hom_R(C,T_0)\to  \hom_R(C,M)\to 0$$
shows that $\gpd_R(\hom_R(C,M))\leq g=\opg\catpc\text{-}\pd_R(M)<\infty$.

\eqref{lem0701iii}$\implies$\eqref{lem0701i}.
Assume that $d=\gpd_R(\hom_R(C,M))<\infty$ and $M\in\catbc(R)$.
Let $T$ be a proper $\catpc(R)$-resolution of $M$,
and set $K_d=\coker(\partial^{T}_{d+1})$.
As in the previous paragraph, 
$\hom_R(C,T)$ is a projective
resolution of $\hom_R(C,M)$ with
$$\hom_R(C,K_d)\cong
\coker(\partial^{\hom_R(C,T)}_{d+1})\in\catac(R).$$
The fact that $d=\gpd_R(\hom_R(C,M))<\infty$ implies that
$\coker(\partial^{\hom_R(C,T)}_{d+1})$ is Gorenstein projective,
and we conclude from Lemma~\ref{lem1004}\eqref{lem1004a} that
$K_d\in\opg(\catpc(R))$.
Hence the exact sequence
$0\to K_d\to T_{d-1}\to\cdots\to T_0\to 0$
shows that we have $\opg(\catpc)\text{-}\pd_R(M)\leq d=\gpd_R(\hom_R(C,M))<\infty$.

Finally, assume that conditions~\eqref{lem0701i}--\eqref{lem0701iii} are satisfied.
The proofs of the three implications
yield the inequalities in the next sequence:
\begin{align*}
\opg(\catpc)\text{-}\pd_R(M)
\leq\opg\catpc\text{-}\pd_R(M)
\leq\gpd_R(\hom_R(C,M))
\leq\opg(\catpc)\text{-}\pd_R(M).
\end{align*}
This verifies the equalities in~\eqref{lem0701c}.
\end{proof}

\begin{lem} \label{lem0701'}
Let $R$ be a commutative ring, and 
let $C$ be a semidualizing $R$-module. For an $R$-module $M$,
the following conditions are equivalent:
\begin{enumerate}[\quad\rm(i)]
\item \label{lem0701'i}
$\opg(\catic)\text{-}\id_R(M)<\infty$;
\item \label{lem0701'ii}
$\opg\catic\text{-}\id_R(M)<\infty$ and $M\in\catac(R)$; and
\item \label{lem0701'iii}
$\gid_R(C\otimes_RM)<\infty$ and $M\in\catac(R)$.
\end{enumerate}
When these conditions are satisfied, we have
\begin{xxalignat}{3}
  &{\hphantom{\square}}& \opg(\catic)\text{-}\id_R(M)
  &
  =\opg\catic\text{-}\id_R(M)
  =\gid_R(C\otimes_RM).  &&\qed
\end{xxalignat}
\end{lem}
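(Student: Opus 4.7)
The plan is to dualize the proof of Lemma~\ref{lem0701} step by step, making the following substitutions throughout: proper $\catic(R)$-coresolutions in place of proper $\catpc(R)$-resolutions, the functor $C\otimes_R-$ in place of $\Hom_R(C,-)$, the Auslander class $\catac(R)$ in place of the Bass class $\catbc(R)$, kernels of differentials along the coresolution in place of cokernels, Lemma~\ref{lem1004}\eqref{lem1004b} in place of Lemma~\ref{lem1004}\eqref{lem1004a}, and the identity $\opg(\catic(R))=\catgic(R)\cap\catac(R)$ from Fact~\ref{disc01} in place of $\opg(\catpc(R))=\catgpc(R)\cap\catbc(R)$.

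In more detail, for \eqref{lem0701'i}$\Rightarrow$\eqref{lem0701'ii}, the inclusion $\opg(\catic(R))\subseteq\opg\catic(R)$ immediately gives $\opg\catic\text{-}\id_R(M)\leq\opg(\catic)\text{-}\id_R(M)<\infty$, and since $\catac(R)$ has the two-of-three property and contains $\opg(\catic(R))$, iterating two-of-three along any finite $\opg(\catic(R))$-coresolution of $M$ forces $M\in\catac(R)$. For \eqref{lem0701'ii}$\Rightarrow$\eqref{lem0701'iii}, the hypothesis $M\in\catac(R)$ produces a proper $\catic(R)$-coresolution $T$ of $M$ via the dual of~\cite[(2.3)]{takahashi:hasm}, and the augmented coresolution remains exact after applying $C\otimes_R-$, yielding an injective coresolution of $C\otimes_R M$, because $\tor^R_{\geq 1}(C,-)$ vanishes on $\catac(R)$ and the evaluation isomorphisms hold there. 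Setting $g=\opg\catic\text{-}\id_R(M)$, the $g$-th cosyzygy $K^g=\ker(\partial^T_{-g})$ is $\text{G}_C$-injective by the duals of~\cite[(2.16)]{holm:smarghd} and~\cite[(2.20)]{holm:ghd}; it lies in $\catac(R)$ by iterated two-of-three, hence in $\catgic(R)\cap\catac(R)=\opg(\catic(R))$. Lemma~\ref{lem1004}\eqref{lem1004b} then gives $C\otimes_R K^g\in\catgi(R)$, and splicing delivers $\gid_R(C\otimes_R M)\leq g$. For \eqref{lem0701'iii}$\Rightarrow$\eqref{lem0701'i}, take a proper $\catic(R)$-coresolution $T$ of $M$ again and set $d=\gid_R(C\otimes_R M)$; then $C\otimes_R K^d$ is Gorenstein injective by definition of $\gid_R$, so $K^d\in\opg(\catic(R))$ by Lemma~\ref{lem1004}\eqref{lem1004b}, which gives $\opg(\catic)\text{-}\id_R(M)\leq d$.

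The resulting chain of three inequalities yields the claimed equalities. The main obstacle is not conceptual but bookkeeping: each cited result has a dual formulation (the proper coresolution from~\cite[(2.3)]{takahashi:hasm} and the cosyzygy characterization of Gorenstein injective dimension from~\cite[(2.16)]{holm:smarghd} and~\cite[(2.20)]{holm:ghd}) that must be invoked with arrows reversed, and care is needed with the index conventions for coresolutions (indices decrease and cokernels become kernels) to avoid off-by-one errors.
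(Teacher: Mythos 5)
Your proposal is correct and coincides with the paper's own treatment: the paper proves Lemma~\ref{lem0701'} precisely by declaring it dual to Lemma~\ref{lem0701}, and your step-by-step dualization (coresolutions for resolutions, $C\otimes_R-$ for $\Hom_R(C,-)$, $\catac(R)$ for $\catbc(R)$, Lemma~\ref{lem1004}\eqref{lem1004b} for \eqref{lem1004a}, and $\opg(\catic(R))=\catgic(R)\cap\catac(R)$) is exactly that argument. The only cosmetic point is that in \eqref{lem0701'iii}$\Rightarrow$\eqref{lem0701'i} the Gorenstein injectivity of the $d$-th cosyzygy follows from the standard cosyzygy characterization of $\gid_R$ rather than ``by definition,'' matching the citation pattern used in the paper's proof of Lemma~\ref{lem0701}.
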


\begin{disc}
Lemmas~\ref{lem0701}--\ref{lem0701'} have 
the following interpretations in terms of 
``Foxby equivalence''. 
Fact~\ref{disc01} shows that the functors $C\otimes_R-$ and 
$\hom_R(C,-)$ provide natural equivalences between the 
Auslander and Bass classes, as we indicate  in the middle row of the following
diagram: 
$$\xymatrix{
\catac(R)\bigcap\operatorname{res}\text{-}\widehat{\opg\catp}_{\leq n}\ar@<1ex>[rr]_{\sim} \ar@{^(->}[d]
&& \operatorname{res}\text{-}\widehat{\opg(\catpc)}_{\leq n} \ar@<1ex>[ll]\ar@{^(->}[d]
\\
\catac(R) \ar@<1ex>[rr]^-{C\otimes_R-}_{\sim}
&&\catbc(R) \ar@<1ex>[ll]^-{\hom_R(C,-)}
\\
\operatorname{cores}\text{-}\widehat{\opg(\catic)}_{\leq n}\ar@<1ex>[rr]_{\sim} \ar@{^(->}[u]
&& \catbc(R) \bigcap\operatorname{cores}\text{-}\widehat{\opg\cati}_{\leq n}. \ar@<1ex>[ll]\ar@{^(->}[u]
}$$
The equivalences in the top and bottom rows of the diagram follow 
from Lemmas~\ref{lem0701} and \ref{lem0701'}, using the equivalence in the middle row.
Here, the notation $\operatorname{res}\text{-}\widehat{\opg\catp}_{\leq n}$
stands for the category of $R$-modules $M$ with
$\gpd_R(M)\leq n$, et cetera.
\end{disc}

The final three results of this section are proved like Lemmas~\ref{lem1004}--\ref{lem0701'}.

\begin{lem} 
Let $R$ be a commutative  ring, and let $C$ be a semidualizing $R$-module.
Let $M$ be an $R$-module.
\begin{enumerate}[\quad\rm(a)]
\item 
$M$ is in $\opg\catp(R)\cap\catac(R)$ if and only if
$C\otimes_RM$ is in $\opg(\catpc(R))$.
\item 
$M$ is in $\opg\cati(R)\cap\catbc(R)$ if and only if
$\Hom_R(C,M)$ is in $\opg(\catic(R))$. \qed
\end{enumerate}
\end{lem}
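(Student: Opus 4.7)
The plan is to derive this lemma as a direct consequence of Lemma~\ref{lem1004} together with the Foxby equivalence recorded in Fact~\ref{disc01}; no new manipulation of complete resolutions is needed. I will detail part~(a) in full, and part~(b) will follow by the dual argument using Lemma~\ref{lem1004}\eqref{lem1004b} in place of \eqref{lem1004a} and interchanging the roles of $C\otimes_R-$ and $\Hom_R(C,-)$.

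For the forward direction of~(a), suppose $M\in\opg\catp(R)\cap\catac(R)$. Membership in $\catac(R)$ supplies, via Fact~\ref{disc01}, both that $C\otimes_RM\in\catbc(R)$ and that the natural map $M\to\Hom_R(C,C\otimes_RM)$ is an isomorphism. Hence $\Hom_R(C,C\otimes_RM)\cong M$ lies in $\opg\catp(R)\cap\catac(R)$, so Lemma~\ref{lem1004}\eqref{lem1004a} applied to the module $C\otimes_RM$ (in the role of the ``$M$'' of that lemma) yields $C\otimes_RM\in\opg(\catpc(R))$.

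For the converse, suppose $C\otimes_RM\in\opg(\catpc(R))$. The identification $\opg(\catpc(R))=\catgpc(R)\cap\catbc(R)$ from Fact~\ref{disc01} yields $C\otimes_RM\in\catbc(R)$, and a second appeal to Fact~\ref{disc01} then gives $M\in\catac(R)$. Applying Lemma~\ref{lem1004}\eqref{lem1004a} once more to $C\otimes_RM$ produces $\Hom_R(C,C\otimes_RM)\in\opg\catp(R)\cap\catac(R)$, and the Foxby isomorphism $\Hom_R(C,C\otimes_RM)\cong M$ (available because $M\in\catac(R)$) delivers $M\in\opg\catp(R)\cap\catac(R)$, as required. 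The only point requiring attention is the bookkeeping of which module each invocation of Fact~\ref{disc01} and Lemma~\ref{lem1004} applies to, and there is no substantive obstacle beyond this formal check.
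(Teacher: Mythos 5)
Your proof is correct. The logical skeleton is sound: in the forward direction you correctly note that $M\in\catac(R)$ supplies both $C\otimes_RM\in\catbc(R)$ and the natural isomorphism $M\cong\Hom_R(C,C\otimes_RM)$, so $\Hom_R(C,C\otimes_RM)\in\opg\catp(R)\cap\catac(R)$, and then Lemma~\ref{lem1004}\eqref{lem1004a} applied to the module $C\otimes_RM$ gives $C\otimes_RM\in\opg(\catpc(R))$. The converse likewise works by first extracting $C\otimes_RM\in\catbc(R)$ from $\opg(\catpc(R))=\catgpc(R)\cap\catbc(R)$, deducing $M\in\catac(R)$, and then reading Lemma~\ref{lem1004}\eqref{lem1004a} in the reverse direction and applying the Auslander-class isomorphism once more. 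All the needed ingredients are in Fact~\ref{disc01} and Definition~\ref{notation08d}, and subcategories are closed under isomorphisms by Definition~\ref{notation01}, so identifying $\Hom_R(C,C\otimes_RM)$ with $M$ causes no trouble.

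Your route is genuinely different from the paper's. The paper states that this lemma ``is proved like Lemmas~\ref{lem1004}--\ref{lem0701'}'', meaning the authors intend a direct argument that manipulates complete resolutions (applying $C\otimes_R-$ to a complete projective resolution, checking exactness, and verifying the $\Hom$-exactness conditions, in parallel with the proof of Lemma~\ref{lem1004}). You instead treat Lemma~\ref{lem1004} as a black box and deduce the present lemma formally via the Foxby equivalence between $\catac(R)$ and $\catbc(R)$. Your approach is shorter and avoids repeating complex-level computations; what it costs is that it is strictly dependent on Lemma~\ref{lem1004} rather than being self-contained, whereas the paper's method re-derives the statement from first principles in the same style. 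Both are valid; yours is the more economical corollary-style argument.
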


\begin{lem} 
Let $R$ be a commutative ring, and 
let $C$ be a semidualizing $R$-module. For an $R$-module $M$,
the following conditions are equivalent:
\begin{enumerate}[\quad\rm(i)]
\item 
$\opg(\catpc)\text{-}\pd_R(C\otimes_RM)<\infty$;
\item 
$\opg\catpc\text{-}\pd_R(C\otimes_RM)<\infty$ and $M\in\catac(R)$; and
\item 
$\gpd_R(M)<\infty$ and $M\in\catac(R)$.
\end{enumerate}
When these conditions are satisfied, we have
\begin{xxalignat}{3}
  &{\hphantom{\square}}& \opg(\catpc)\text{-}\pd_R(C\otimes_RM)
  &
=\opg\catpc\text{-}\pd_R(C\otimes_RM)
=\gpd_R(M).
  &&\qed
\end{xxalignat}
\end{lem}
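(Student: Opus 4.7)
The plan is to mirror Lemma~\ref{lem0701} almost verbatim, interchanging the roles of $\catac(R)$ and $\catbc(R)$ and of the functors $\hom_R(C,-)$ and $C\otimes_R-$. The essential tools are the Foxby equivalence from Fact~\ref{disc01} and the first of the three unnumbered lemmas preceding this one, which characterizes $\opg(\catpc(R))$-membership via $C\otimes_R-$.

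For~(i)$\Rightarrow$(ii), the inclusion $\opg(\catpc(R))\subseteq\opg\catpc(R)$ gives the first inequality, and since $\catbc(R)$ satisfies the two-of-three property and contains $\opg(\catpc(R))$, one obtains $C\otimes_RM\in\catbc(R)$, hence $M\in\catac(R)$ by Fact~\ref{disc01}. For~(ii)$\Rightarrow$(iii), set $g=\opg\catpc\text{-}\pd_R(C\otimes_RM)$; the condition $M\in\catac(R)$ places $C\otimes_RM$ in $\catbc(R)$, so this module admits a proper $\catpc(R)$-resolution $T$ by~\cite[(2.3)]{takahashi:hasm}. Applying $\hom_R(C,-)$ converts $T$ into a projective resolution of $M\cong\hom_R(C,C\otimes_RM)$, while the $g$th cokernel $K_g=\coker(\partial^T_{g+1})$ is $\text{G}_C$-projective by~\cite[(2.16)]{holm:smarghd} and, by two-of-three applied along $T$, also lies in $\catbc(R)$; thus $K_g\in\catgpc(R)\cap\catbc(R)=\opg(\catpc(R))$, so Lemma~\ref{lem1004}\eqref{lem1004a} gives $\hom_R(C,K_g)\in\catgp(R)$ and hence $\gpd_R(M)\leq g$. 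For~(iii)$\Rightarrow$(i), set $d=\gpd_R(M)$ and take a projective resolution $P\to M$; since $M\in\catac(R)$ implies $\tor^R_{\geq 1}(C,M)=0$ (and the analogous vanishing at every syzygy, by dimension shifting), the complex $C\otimes_RP$ is a $\catpc(R)$-resolution of $C\otimes_RM$. Its $d$th cokernel $L_d=\coker(\partial^P_{d+1})$ is Gorenstein projective and lies in $\catac(R)$ by the two-of-three property, so the first unnumbered lemma places $C\otimes_RL_d$ in $\opg(\catpc(R))$, whence $\opg(\catpc)\text{-}\pd_R(C\otimes_RM)\leq d$. Chaining the three estimates produces the displayed equalities.

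I do not anticipate a genuine obstacle: the three implications follow the template of Lemma~\ref{lem0701} after a Foxby-equivalence swap. The one point that requires care is at~(ii)$\Rightarrow$(iii), where $K_g$ must land in the intersection $\catgpc(R)\cap\catbc(R)$ rather than merely in $\catgpc(R)$; this is precisely where the hypothesis $M\in\catac(R)$ is consumed, via its Foxby-equivalent form $C\otimes_RM\in\catbc(R)$, to propagate Bass-class membership down the proper $\catpc(R)$-resolution.
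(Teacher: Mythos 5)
Your proof is correct and follows essentially the same route the paper has in mind: the paper states that this lemma is ``proved like Lemmas~\ref{lem1004}--\ref{lem0701'},'' and your argument is indeed the Foxby-swapped mirror of the proof of Lemma~\ref{lem0701}, using the first of the unnumbered lemmas in place of Lemma~\ref{lem1004}\eqref{lem1004a}. The only cosmetic deviation is in (iii)$\Rightarrow$(i), where you tensor a projective resolution of $M$ up to a $\catpc(R)$-resolution of $C\otimes_RM$ rather than taking a proper $\catpc(R)$-resolution of $C\otimes_RM$ directly and applying $\hom_R(C,-)$; both routes are valid and of comparable length.
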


\begin{lem} 
Let $R$ be a commutative ring, and 
let $C$ be a semidualizing $R$-module. For an $R$-module $M$,
the following conditions are equivalent:
\begin{enumerate}[\quad\rm(i)]
\item 
$\opg(\catic)\text{-}\id_R(\hom_R(C,M))<\infty$;
\item 
$\opg\catic\text{-}\id_R(\hom_R(C,M))<\infty$ and $M\in\catbc(R)$; and
\item 
$\gid_R(M)<\infty$ and $M\in\catbc(R)$.
\end{enumerate}
When these conditions are satisfied, we have
\begin{xxalignat}{3}
  &{\hphantom{\square}}& \opg(\catic)\text{-}\id_R(\hom_R(C,M))
  &
  =\opg\catic\text{-}\id_R(\hom_R(C,M))
  =\gid_R(M).  &&\qed
\end{xxalignat}
\end{lem}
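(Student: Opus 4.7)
The plan is to deduce this lemma from Lemma~\ref{lem0701'} by Foxby equivalence, with the substitution $N := \hom_R(C,M)$. The key identity is that $N \in \catac(R)$ iff $M \in \catbc(R)$, and when $M \in \catbc(R)$ the evaluation map gives $C \otimes_R N \cong M$; both facts are recorded in Fact~\ref{disc01}. So once the hypothesis $M \in \catbc(R)$ is in force, applying Lemma~\ref{lem0701'} to $N$ transports each of its three equivalent conditions directly onto our three, and in particular transports the chain of equalities as well.

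The first step is therefore to verify that each of (i), (ii), (iii) already forces $M \in \catbc(R)$. This is explicit in (ii) and (iii). For (i), I would argue as follows: a finite $\opg(\catic)$-coresolution of $\hom_R(C,M)$ consists of modules in $\opg(\catic(R)) = \catgic(R) \cap \catac(R) \subseteq \catac(R)$, so by induction on the length of the coresolution together with the two-of-three property of $\catac(R)$ (Fact~\ref{disc01}), we conclude $\hom_R(C,M) \in \catac(R)$, hence $M \in \catbc(R)$. This step is self-contained and avoids circularity with Lemma~\ref{lem0701'}.

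Next, under the assumption $M \in \catbc(R)$, set $N = \hom_R(C,M)$. Then $N \in \catac(R)$ and $C \otimes_R N \cong M$. Condition (i) of Lemma~\ref{lem0701'} applied to $N$ is exactly our (i); its condition (ii) becomes $\opg\catic\text{-}\id_R(\hom_R(C,M)) < \infty$ together with the (now automatic) condition $\hom_R(C,M) \in \catac(R)$, which is our (ii); and its condition (iii) becomes $\gid_R(M) < \infty$ with the same automatic condition, which is our (iii). The equivalence of (i), (ii), (iii) then follows immediately, as does the equality
\[
\opg(\catic)\text{-}\id_R(\hom_R(C,M)) = \opg\catic\text{-}\id_R(\hom_R(C,M)) = \gid_R(M)
\]
via the corresponding equality in Lemma~\ref{lem0701'}.

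There is no real obstacle here: the whole proof is a direct translation through Foxby equivalence, and the only point that requires any thought is the first-step observation that (i) alone is enough to force $M \in \catbc(R)$, which is why one cannot merely cite Lemma~\ref{lem0701'} blindly without first securing the Bass-class hypothesis.
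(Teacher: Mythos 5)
Your proposal is correct, but it takes a different route from the paper's. The paper disposes of this lemma (and the two preceding it) with the remark that they ``are proved like Lemmas~\ref{lem1004}--\ref{lem0701'}'', i.e.\ by repeating the direct argument at the level of (co)resolutions and Auslander/Bass class membership, in the style of the proofs of Lemmas~\ref{lem1004} and~\ref{lem0701}. You instead derive the statement formally from Lemma~\ref{lem0701'} applied to $N=\hom_R(C,M)$, using the dictionary from Fact~\ref{disc01}: $\hom_R(C,M)\in\catac(R)$ iff $M\in\catbc(R)$, and the evaluation isomorphism $C\otimes_R\hom_R(C,M)\cong M$ once $M\in\catbc(R)$, which also transports the chain of equalities. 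This translation is valid and is really the content of the ``Foxby equivalence'' Remark following Lemma~\ref{lem0701'}; it buys brevity and avoids redoing the resolution-level work, at the cost of leaning on the previously proved lemma rather than being self-contained. One small remark: your preliminary step showing that condition (i) alone forces $M\in\catbc(R)$ is sound (finite $\opg(\catic)$-coresolutions consist of modules in $\opg(\catic(R))\subseteq\catac(R)$, and $\catac(R)$ has the two-of-three property), but it is not actually needed -- the implication (i)$\Rightarrow$(ii) of Lemma~\ref{lem0701'} already yields $\hom_R(C,M)\in\catac(R)$, and each of the three translated conditions matches the corresponding target condition verbatim under the dictionary, so there is no circularity to guard against; the redundancy is harmless.
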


\section{Tate resolutions} \label{sec7}

In this section, we study the resolutions used to define 
our Tate cohomology functors. 
In many cases, the objects admitting such resolutions are
precisely the objects of finite $\opg(\catw)$-projective/injective dimension;
see Theorems~\ref{cplt05} and~\ref{cor666}.

\begin{defn} \label{cplt02}
Let $M$ and $N$  be  objects in $\cata$.

A \emph{Tate $\catw$-resolution} of $M$ is a diagram 
$T\xra{\alpha}W\xra{\gamma} M$
of morphisms of $\cata$-complexes
wherein $T$ is an exact $\catw$-complex that
is totally $\catw$-acyclic, $\gamma$ is a proper $\catw$-resolution of $M$,
and $\alpha_n$ is an isomorphism for $n\gg 0$.
We set
$$\cpltrescatw=\text{the subcategory of objects of $\cata$ admitting a Tate 
$\catw$-resolution.}$$

A \emph{Tate $\catv$-coresolution} of $N$ is a diagram 
$N\xra{\delta} V\xra{\beta} S$
of morphisms of $\cata$-complexes
wherein $S$ is an exact $\catv$-complex that
is totally $\catv$-acyclic, $\delta$ is a proper $\catv$-coresolution of $N$,
and $\beta_n$ is an isomorphism for $n\ll 0$. We set
$$\cpltcorescatv=
\text{the subcategory of objects of $\cata$ admitting a Tate 
$\catv$-coresolution.}$$
\end{defn}

\begin{fact} \label{cplt03}
Given  $M',M''\in \cpltrescatw$ with Tate $\catw$-resolutions
$T'\xra{\alpha'}W'\xra{\gamma'} M'$
and $T''\xra{\alpha''}W''\xra{\gamma''} M''$,
one readily shows that
the direct sum
$$T'\oplus T''\xra{\left(\begin{smallmatrix}\alpha' & 0 \\ 0 & \alpha''\end{smallmatrix}\right)}
W \oplus W''
\xra{\left(\begin{smallmatrix} \gamma' & 0 \\ 0 & \gamma''\end{smallmatrix}\right)} M \oplus M''$$
is a Tate $\catw$-resolution of $M \oplus M''$.
It follows that $\cpltrescatw$ is a subcategory
of $\cata$.
Similarly, we see that $\cpltcorescatv$ is a subcategory
of $\cata$.

If $M$ admits a Tate $\catw$-resolution
$T\to W\to M$, then $W$ is a proper $\catw$-resolution of $M$.
Hence, $\cpltrescatw\subseteq\proprescatw$, and similarly,
$\cpltcorescatv \subseteq\propcorescatv$.

If $M$ is in $\opg(\catw)$ with complete $\catw$-resolution $T$, 
then $M$ admits a Tate 
$\catw$-resolution $T\to T_{\geq 0}\to M$ and a Tate 
$\catw$-coresolution $M\to \shift T_{<0}\to \shift T$.
Hence, $\opg(\catw)$ is a subcategory of $\cpltrescatw\cap \cpltcorescatw$.

Assume that $\catw\perp\catw$. If
$M\in\finrescatw$, then any bounded $\catw$-resolution $W\xra{\gamma}M$
is proper by~\cite[(3.2.a)]{sather:gcac}, and this
yields a Tate $\catw$-resolution $0\to W\xra{\gamma}M$.
In particular, we have
$\finrescatw\subseteq\cpltrescatw$.
Similarly, if $\catv\perp\catv$, then 
$\fincorescatv\subseteq\cpltcorescatv$.
\end{fact}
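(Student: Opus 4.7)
The plan is to verify in sequence each of the four assertions comprising this Fact, invoking Definition~\ref{cplt02} together with the earlier Facts~\ref{disc0001} and~\ref{gtriv01}.

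To show that $\cpltrescatw$ is closed under the formation of finite direct sums (and hence is a subcategory), I would check the three defining conditions of a Tate $\catw$-resolution componentwise. Since $\ahom(\catw,-)$ and $\ahom(-,\catw)$ convert finite direct sums in either slot into finite products, the direct sum of two totally $\catw$-acyclic complexes remains totally $\catw$-acyclic; proper $\catw$-resolutions combine by direct sum for the same reason; and a finite direct sum of isomorphisms in sufficiently high degree is still an isomorphism. The displayed formula then yields a Tate $\catw$-resolution of $M'\oplus M''$, and the argument for $\cpltcorescatv$ is identical. The containment $\cpltrescatw\subseteq\proprescatw$ is then immediate from Definition~\ref{cplt02}, which requires the middle morphism $W\to M$ to be a proper $\catw$-resolution; dually for $\cpltcorescatv\subseteq\propcorescatv$.

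For $\opg(\catw)\subseteq\cpltrescatw\cap\cpltcorescatw$, fix $M\in\opg(\catw)$ with complete $\catw$-resolution $T$. By Fact~\ref{gtriv01}, $T_{\geq 0}$ is a proper $\catw$-resolution of $M$. I would define $\alpha\colon T\to T_{\geq 0}$ to be the identity in nonnegative degrees and zero in negative degrees; the chain-map condition in degree $0$ reduces to a trivial check because the target differential $(T_{\geq 0})_{0}\to (T_{\geq 0})_{-1}=0$ is zero. Then $T\xra{\alpha} T_{\geq 0}\to M$ is a Tate $\catw$-resolution, with $\alpha_n$ an isomorphism for all $n\geq 0$, not merely $n\gg 0$. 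The construction of the Tate $\catw$-coresolution $M\to \shift T_{<0}\to \shift T$ is dual.

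Finally, for $\finrescatw\subseteq\cpltrescatw$ under the hypothesis $\catw\perp\catw$: given $M\in\finrescatw$, I would choose any bounded $\catw$-resolution $W\xra{\gamma}M$. The cited result \cite[(3.2.a)]{sather:gcac} shows that the condition $\catw\perp\catw$ forces $\gamma$ to be proper. Then $0\to W\xra{\gamma}M$ satisfies all three conditions of Definition~\ref{cplt02}: the zero complex is trivially a totally $\catw$-acyclic $\catw$-complex, $\gamma$ is a proper $\catw$-resolution by construction, and the requirement that $\alpha_n$ be an isomorphism for $n\gg 0$ holds vacuously because $W_n=0$ for $n\gg 0$. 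The analogous argument proves $\fincorescatv\subseteq\cpltcorescatv$ when $\catv\perp\catv$. The only conceptual subtlety lies in verifying that the natural truncation map in the $\opg(\catw)$-case is a bona fide chain map; with the hard-truncation convention of Definition~\ref{notation07} this causes no real difficulty.
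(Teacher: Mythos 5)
Your verification is correct and follows the same route the paper implies by ``one readily shows'': unpacking Definition~\ref{cplt02} and checking that total $\catw$-acyclicity, properness, and the eventual-isomorphism condition all pass to finite direct sums (via $\ahom$ turning sums into products in each slot), that the identity-then-zero map $T\to T_{\geq 0}$ is a chain map because $\partial^{T_{\geq 0}}_0=0$, and that the eventual-isomorphism condition holds vacuously for $0\to W\to M$ when $W$ is bounded. No gaps.
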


The next two results are tools for the proof of Theorem~\ref{cplt05}.

\begin{lem} \label{cplt04}
One has $\cpltrescatw\subseteq\finrescat{G(W)}$
and $\cpltcorescatv \subseteq\fincorescat{G(V)}$.
\end{lem}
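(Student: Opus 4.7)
The plan is to prove the resolution statement $\cpltrescatw\subseteq\finrescat{G(W)}$ by chopping off a proper $\catw$-resolution at a high enough degree and recognizing the resulting syzygy as an object of $\opg(\catw)$; the coresolution statement then follows by an entirely dual argument, so I focus on the first.

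Fix an object $M\in\cpltrescatw$ together with a Tate $\catw$-resolution $T\xra{\alpha}W\xra{\gamma}M$. By definition, there exists an integer $N\geq 1$ such that $\alpha_n\colon T_n\to W_n$ is an isomorphism for every $n\geq N-1$. The first key step is to identify the syzygy $K=\ker(\partial^W_{N-1})$ as an object of $\opg(\catw)$. Since $W^+$ is exact in positive degrees one has $K=\coker(\partial^W_{N+1})$, and the chain-map relation $\alpha_N\partial^T_{N+1}=\partial^W_{N+1}\alpha_{N+1}$ together with the fact that both $\alpha_N$ and $\alpha_{N+1}$ are isomorphisms produces an induced isomorphism $\coker(\partial^T_{N+1})\xra{\cong}\coker(\partial^W_{N+1})=K$. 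The shifted complex $\shift^{-N}T$ is again totally $\catw$-acyclic, because shifts preserve exactness, stay in $\catw$, and preserve both $\ahom(\catw,-)$- and $\ahom(-,\catw)$-exactness; moreover, the cokernel of its first differential is precisely $\coker(\partial^T_{N+1})$. Hence Definition~\ref{defG} places $K$ in $\opg(\catw)$.

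The second step is a routine truncation. The hard truncation of the augmented resolution $W^+$ yields an exact sequence
$$0\to K\to W_{N-1}\to\cdots\to W_0\to M\to 0,$$
and since Fact~\ref{gtriv01} shows that $\catw$ is a subcategory of $\opg(\catw)$, every $W_i$ sits in $\opg(\catw)$. Together with $K\in\opg(\catw)$, this exhibits a bounded $\opg(\catw)$-resolution of $M$ of length at most $N$, establishing $\opg(\catw)\text{-}\pd(M)\leq N<\infty$, i.e., $M\in\finrescat{G(W)}$. Dualizing throughout (replacing $W$ with $V$, the proper resolution with a proper coresolution, $\ker$ with $\coker$, and invoking the dual of Definition~\ref{defG}) delivers $\cpltcorescatv\subseteq\fincorescat{G(V)}$.

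The only mildly delicate point will be bookkeeping the identification $\coker(\partial^T_{N+1})\cong K$: one must check that two consecutive isomorphisms in a commuting square of a chain map do force an isomorphism on cokernels, and that the shifted totally acyclic complex really does witness $\coker(\partial^T_{N+1})\in\opg(\catw)$. Both are straightforward once stated, so there is no substantive obstacle beyond careful indexing.
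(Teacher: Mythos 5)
Your proof is correct and follows essentially the same route as the paper's: take the cokernel of a high-degree differential of $T$ (which lies in $\opg(\catw)$ because $T$ is totally $\catw$-acyclic, a point you justify via the shift $\shift^{-N}T$), use the isomorphisms $\alpha_n$ in high degree to identify it with the corresponding syzygy of $W$, and truncate $W^+$ to get a bounded $\opg(\catw)$-resolution. The paper states the observation $\coker(\partial^T_n)\in\opg(\catw)$ for all $n$ more tersely and argues with cokernels throughout, but this is only a difference in bookkeeping, not in substance.
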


\begin{proof}
We prove the first containment; the proof of the second containment is dual.

Let $M$ be an object in $\cpltrescatw$, and 
fix a Tate $\catw$-resolution
$T\xra{\alpha}W\xra{\gamma} M$.
Since $T$ is an exact totally $\catw$-acyclic complex in $\catw$, the object
$\coker(\partial_n^{T})$ is in $\opg(\catw)$ for each integer $n$.
By assumption, the homomorphism $\alpha_n$ is an isomorphism
for $n\gg 0$, and it follows that
$\coker(\partial_n^{W})\cong\coker(\partial_n^{T})\in\opg(\catw)$ for $n\gg 0$.
Also, each object $W_i$ is in $\opg(\catw)$, so the exact sequence
$$0\to \coker(\partial_n^{W})\to W_{n-1}\to\cdots\to W_0\to M\to 0$$
is a bounded augmented $\opg(\catw)$-resolution of $M$.
\end{proof}

\begin{lem} \label{cplt01}
Assume that $\catx$ and $\caty$ are exact, that $\catw$ is both an injective cogenerator 
and a projective generator for $\catx$, and  that $\catv$ is both an injective cogenerator 
and a projective generator for $\caty$.
\begin{enumerate}[\quad\rm(a)]
\item \label{cplt01item1}
Let $M$ be an object in $\finrescatx$.
If $\catx$ is closed under kernels of epimorphisms,
then $M$ admits a Tate $\catw$-resolution
$T\xra{\alpha}W\xra{\gamma} M$
such that $\alpha_n$ 
is an isomorphism for each $n\geq\xpd(M)$ and each object $\Ker(\partial^T_i)$ is in $\catx$.
Moreover, this resolution can be built so that
$\alpha_n$ is a split surjection for all $n$.
\item \label{cplt01item2}
Let $N$ be an object in $\fincorescaty$.
If $\caty$ is closed under cokernels of monomorphisms,
then $N$ admits a Tate $\catv$-coresolution
$N\xra{\delta} V\xra{\beta} S$
such that $\beta_n$ 
is an isomorphism for each $n\leq-\yid(N)$ and each object $\Ker(\partial^S_i)$ is in $\caty$.
Moreover, this resolution can be built so that
$\beta_n$ is a split injection for all $n$.
\end{enumerate}
\end{lem}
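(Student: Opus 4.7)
The plan is to prove part~\eqref{cplt01item1}; part~\eqref{cplt01item2} follows dually. The hypotheses give $\catw\perp\catx$ (from the projective generator property) and $\catx\perp\catw$ (from the injective cogenerator property), and in particular $\catw\perp\catw$. First I would record the auxiliary fact $\catx\subseteq\opg(\catw)$: for each $X\in\catx$ the projective generator property together with closure of $\catx$ under kernels of epimorphisms produces a $\catw$-resolution $\cdots\to W_1\to W_0\to X\to 0$ with syzygies in $\catx$, while the injective cogenerator property produces a $\catw$-coresolution $0\to X\to W_{-1}\to W_{-2}\to\cdots$ with cosyzygies in $\catx$. Splicing yields an exact $\catw$-complex, and the orthogonality relations $\catw\perp\catx$ and $\catx\perp\catw$ make it both $\ahom(\catw,-)$- and $\ahom(-,\catw)$-exact, so $X\in\opg(\catw)$.

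Set $d=\xpd(M)$. The central step is to construct a proper $\catw$-resolution $\gamma\colon W\to M$ whose $d$-th syzygy $K_d=\ker\partial^W_{d-1}$ lies in $\catx$, by induction on $d$. The base $d=0$ is immediate since $M\in\catx$. For $d\geq 1$, fix a $\catw\catx$-approximation $0\to K\to X_0\to M\to 0$ from Definition~\ref{ab01} and an epimorphism $V_0\twoheadrightarrow X_0$ with $V_0\in\catw$ and kernel $L_1\in\catx$. The snake lemma applied to the composite $V_0\twoheadrightarrow X_0\twoheadrightarrow M$ yields $0\to L_1\to L\to K\to 0$ with $L=\ker(V_0\twoheadrightarrow M)$. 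The defining sequence forces $\xpd(K)\leq d-1$, and closure of $\catx$ under extensions combined with $L_1\in\catx$ gives $\xpd(L)\leq d-1$. By induction, $L$ admits a proper $\catw$-resolution with $(d{-}1)$-st syzygy in $\catx$; splicing it onto $V_0\twoheadrightarrow M$ yields $W$. Properness at the splice reduces to $\ext^1_\cata(\catw,L)=0$, which follows by dimension shifting $\catw\perp\catx$ along any bounded $\catx$-resolution of $L$.

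Since $K_d\in\catx\subseteq\opg(\catw)$, I would next choose a complete $\catw$-resolution $U$ of $K_d$ whose nonnegative half coincides (up to reindexing) with $W_{\geq d}$ augmented by $K_d$, and whose negative half is a proper $\catw$-coresolution of $K_d$ with cosyzygies in $\catx$, built via the injective cogenerator property as in the first paragraph. Reindexing produces a totally $\catw$-acyclic complex $T$ with $T_n=W_n$ for $n\geq d$, every $\ker\partial^T_i$ in $\catx$, and the tautological map $\alpha_n=\id_{W_n}$ for $n\geq d$. For $n<d$ I would extend $\alpha$ one step at a time: the obstruction at degree $n$ lives in $\ext^1_\cata(Z,W_{n-1})$ for a cosyzygy $Z\in\catx$ of $T$, and this vanishes by $\catx\perp\catw$. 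To force every $\alpha_n$ to be split surjective, replace $T$ by $T\oplus\bigoplus_{k\leq d-1}D^{(k)}$ where $D^{(k)}$ is the contractible disc $0\to W_k\xra{\id_{W_k}}W_k\to 0$ in degrees $k$ and $k-1$, and extend $\alpha$ so that the degree-$k$ copy of $W_k$ in $D^{(k)}$ maps identically onto $W_k\subseteq W$. Each $D^{(k)}$ is contractible, hence totally $\catw$-acyclic by Fact~\ref{disc0001}, and no $D^{(k)}$ contributes in degrees $\geq d$, so total $\catw$-acyclicity of the enlarged $T$ and the isomorphism property for $n\geq d$ are both preserved.

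I expect the main obstacle to be the inductive construction of $W$ in the second paragraph. Because $\catx$ is \emph{not} assumed closed under cokernels of monomorphisms, the $d$-th syzygy of an arbitrary $\catx$-resolution of $M$ need not lie in $\catx$, so one cannot obtain $W$ simply by dressing up an $\catx$-resolution with $\catw$-resolutions of its terms. The snake-lemma maneuver applied to the $\catw\catx$-approximation trades the missing cokernel closure for the closure under kernels of epimorphisms that \emph{is} hypothesized, and this is what keeps the running syzygy class inside $\catx$ throughout the induction.
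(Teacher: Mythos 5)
Your proof is correct, and the overall architecture matches the paper's: locate the $d$-th syzygy of a proper $\catw$-resolution in $\catx$, splice with a proper $\catw$-coresolution whose cosyzygies stay in $\catx$, lift to get $\alpha$, and then force split-surjectivity by adding a contractible summand supported in degrees $<d$. Within that shared skeleton there are three local variations worth flagging.

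First, the inductive construction of $W$ in your second paragraph is doing work the paper avoids. The paper simply takes \emph{any} proper $\catw$-resolution $W\xra{\gamma}M$ (its existence comes from $\finrescatx\subseteq\proprescatw$, via~\cite[(3.3)]{sather:gcac}) and then cites~\cite[(3.3)]{auslander:htmcma} to conclude directly that $\Ker(\partial^W_{d-1})\in\catx$, using exactly the hypothesis that $\catx$ is closed under kernels of epimorphisms. Your inductive approximation-plus-snake-lemma argument reaches the same conclusion but reproves a chunk of Auslander--Buchweitz along the way: the step $\xpd(K)\leq d-1$ from the approximation and the step $\xpd(L)\leq d-1$ from the extension are precisely the two-of-three/resolving properties of $\finrescatx$ established in~\cite[(3.4),(3.5)]{auslander:htmcma}. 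So nothing is gained, and the worry you voice in your last paragraph about $\catx$-resolutions is not an obstacle the paper faces, since it works with $\catw$-resolutions, for which the syzygy statement holds under the kernels-of-epimorphisms hypothesis.

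Second, where you lift $\alpha$ degree by degree using the vanishing of $\aext^1(Z,W_{n-1})$ for a cosyzygy $Z\in\catx$ (i.e.\ $\catx\perp\catw$), the paper instead invokes the properness of the coresolution $\wti W$ (its $\ahom(-,\catw)$-exactness) to produce the comparison morphism $\gamma\colon\comp W\to W_{<d}$ all at once. These are the same vanishing used in different packaging, and both are fine. Third, your contractible correction term $\bigoplus_{k\leq d-1}D^{(k)}$ differs from the paper's $T^{(2)}=\shift^{-1}\cone(\id_{W_{<d}})$ as a complex, though they agree degreewise and both are contractible, hence totally $\catw$-acyclic by Fact~\ref{disc0001}. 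One small point to spell out in your disc construction: the chain map $D^{(k)}\to W$ with degree-$k$ component $\id_{W_k}$ is forced to have degree-$(k-1)$ component $\partial^W_k$, so the morphism is $(\id_{W_k},\partial^W_k)$; it is not the literal inclusion in both degrees. With that said, the proof is sound.
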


\begin{proof}
We prove part~\eqref{cplt01item1}; the proof of~\eqref{cplt01item2} is dual.

Since $\catw$ is a projective generator and an injective generator for $\catx$, we have
$\catx\subseteq\proprescatw\cap\propcorescatw$ and
$\finrescatx\subseteq\proprescatw$ by~\cite[(3.3)]{sather:gcac}. 
In particular, the object $M$ admits a proper $\catw$-resolution $W\xra{\gamma}M$. 
Set $d=\xpd(M)$.  Since $\catx$ is closed under kernels of epimorphisms, 
it follows from~\cite[(3.3)]{auslander:htmcma}
that $X=\Ker(\partial_{d-1}^W)$ is  in $\catx$,
and hence $X$ admits a proper $\catw$-coresolution $X\xra{\simeq}\wti{W}$
such that each $\Ker(\partial_i^{\wti{W}})$ is in $\catx$; see~\cite[(1.8)]{sather:sgc}.
A standard argument 
using the condition $\catw\perp\catx$
shows that $^+\wti{W}$ is $\ahom(\catw,-)$-exact.

Set $\comp{W}= \shift^{d-1}\wti{W}$.
The properness of $\wti{W}$ 
yields a morphism  $\gamma\colon \comp{W}\to W_{<d}$
making the following diagram commute.
\begin{equation} 
\begin{split} \label{diag11}
\xymatrix{
0 \ar[r] & X \ar[r]\ar[d]^{\id_X} & \comp{W}_{d-1} \ar[r]\ar[d]^{\gamma_{d-1}} 
&  \cdots\ar[r] & \comp{W}_{0} \ar[r]\ar[d]^{\gamma_{0}} & \comp{W}_{-1} \ar[r]\ar[d]^{\gamma_{-1}} & \cdots \\
0 \ar[r] & X \ar[r] & W_{d-1} \ar[r] &  \cdots\ar[r] & W_{0} \ar[r] & 0 \ar[r] & \cdots
}
\end{split}
\end{equation}
The top row of this diagram is both
$\ahom(\catw,-)$-exact and $\ahom(-,\catw)$-exact.
The truncation $W_{\geq d}$ is a proper $\catw$-resolution of $X$,
hence the complex $W_{\geq d}^+$ is $\ahom(\catw,-)$-exact;
a standard argument 
using the condition $\catx\perp\catw$
shows that it is also $\ahom(-,\catw)$-exact.
Let $T^{(1)}$ be the complex obtained by splicing $W_{\geq d}$ and $\comp{W}$ along $X$.
It follows that each $T^{(1)}_n$ is in $\catw$ and that 
$T^{(1)}$ is both $\ahom(\catw,-)$-exact and $\ahom(-,\catw)$-exact.
Set
$$\alpha^{(1)}_n=\begin{cases}
\gamma_n & \text{for $n<d$} \\
\id_{W_n}& \text{for $n\geq d$.}
\end{cases}$$
The diagram~\eqref{diag11} shows that  $\alpha^{(1)}\colon T^{(1)}\to W$
is a morphism, and it follows that the diagram
$T\xra{\alpha^{(1)}}W\xra{\gamma}M$ is a Tate $\catw$-resolution.

Next we show how to modify 
the Tate $\catw$-resolution $T^{(1)}\xra{\alpha^{(1)}}W\xra{\gamma}M$ to build 
a Tate $\catw$-resolution $T\xra{\alpha}W\xra{\gamma}M$
such that each $\alpha_n$ is a split surjection and such that
$\alpha_n=\alpha_n'$ for all $n\geq d$.
To this end, it suffices to construct  a contractible $\catw$-complex $T^{(2)}$
and a morphism $\alpha\colon T^{(1)}\oplus T^{(2)}\to W$ such that
$\alpha_n$ is a split surjection for each $n< d$, and such that 
$T^{(2)}_n=0$ for each $n\geq d$.  

Consider the  truncation $W_{<d}$. 
The complex $T^{(2)}=\shift^{-1}\cone(\id_{W_{<d}})$ is contractible, and 
$T^{(2)}_n=0$ for each $n\geq d$; see Fact~\ref{disc0001}.
Let $f\colon T^{(2)}\to W$ denote the composition of the natural morphisms 
$T^{(2)}=\shift^{-1}\cone(\id_{W_{<d}})\to W_{<d}\to W$.
Note that $f_n$ is a split epimorphism for each $n<d$,
and $f_n=0$ for each $n\geq d$.
One checks readily that the morphisms
$\alpha_n=(\alpha^{(1)}_n \,\,\,\, f_n)\colon T^{(1)}_n\oplus T^{(2)}_n\to W_n$
describe a morphism of complexes satisfying the desired properties.
\end{proof}

The next result is a version of Lemma~\ref{cplt01} for objects in $\catx$
with fewer hypotheses on the categories.

\begin{prop} \label{prop0701}
Let $M$ be an object in $\cata$.
Assume that $\catw$ is an injective cogenerator for $\catx$,
and that $\catv$ is a projective generator for $\caty$.
\begin{enumerate}[\quad\rm(a)]
\item \label{prop0701a}
If $M\in\catx$, then $M\in\cpltrescatw$ if and only if $M\in\opg(\catw)$.
\item \label{prop0701b}
If $M\in\caty$, then $M\in\cpltcorescatv$ if and only if $M\in\opg(\catv)$.
\end{enumerate}
\end{prop}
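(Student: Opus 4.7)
Both directions are needed. The implication $M\in\opg(\catw)\Rightarrow M\in\cpltrescatw$ is already recorded in Fact~\ref{cplt03}, since a complete $\catw$-resolution of $M$ yields a Tate $\catw$-resolution directly; the dual implication in part~\eqref{prop0701b} is handled the same way. The content of the proposition is therefore the reverse implication in each part, and I will concentrate on part~\eqref{prop0701a}; part~\eqref{prop0701b} is entirely dual.

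The plan for part~\eqref{prop0701a} is to realize $M$ as a direct summand of an object of $\opg(\catw)$ via an Auslander--Buchweitz-style approximation, and then to exploit $\catx\perp\catw$ to split this approximation. First I observe that the standing assumption $\catw\subseteq\catx$ combined with $\catx\perp\catw$ forces $\catw\perp\catw$, so Fact~\ref{sgcsummary} applies: $\opg(\catw)$ is exact (hence closed under direct summands) and admits $\catw$ as an injective cogenerator. By Lemma~\ref{cplt04}, the hypothesis $M\in\cpltrescatw$ gives $M\in\finrescat{G(W)}$. Applying Definition~\ref{ab01} to the pair $(\opg(\catw),\catw)$ then produces an exact sequence
\[
0\to K\to G\to M\to 0
\]
with $G\in\opg(\catw)$ and $K\in\finrescatw$.

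To split this sequence I combine $M\in\catx$ with $\catx\perp\catw$ to get $\aext^{\geq 1}_{\cata}(M,W')=0$ for each $W'\in\catw$, and then dimension-shift through a bounded $\catw$-resolution of $K$ (valid because $\catw\perp\catw$) to conclude $\aext^{\geq 1}_{\cata}(M,K)=0$. The resulting splitting exhibits $M$ as a direct summand of $G\in\opg(\catw)$, and closure of $\opg(\catw)$ under direct summands gives $M\in\opg(\catw)$. Part~\eqref{prop0701b} follows by the dual construction: the hull version of Auslander--Buchweitz produces $0\to M\to G\to K\to 0$ with $G\in\opg(\catv)$ and $K\in\fincorescatv$, and $\catv\perp\caty$ together with $M\in\caty$ splits it. The step I expect to require the most care is verifying that the Auslander--Buchweitz approximation actually applies to $(\opg(\catw),\catw)$ under the present weak hypotheses on $\catw$; once that approximation is in hand, the splitting via dimension shifting is routine.
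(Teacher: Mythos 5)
Your overall strategy---build an $\opg(\catw)$-approximation $0\to K\to G\to M\to 0$ with $G\in\opg(\catw)$ and $K\in\finrescatw$, show $\aext^1(M,K)=0$ by dimension-shifting, split, and conclude $M\in\opg(\catw)$ by summand-closure---is genuinely different from the paper's argument. The paper proves the converse implication directly by constructing a complete $\catw$-resolution of $M$: it first shows that the proper $\catw$-resolution $W$ underlying the Tate resolution has $W^+$ also $\ahom(-,\catw)$-exact (because $M$ and each $W_i$ lie in $\catx$ and $\catx\perp\catw$), then establishes $\catw\perp M$ by dimension-shifting down from the high-degree cokernels of $W$ (which lie in $\opg(\catw)$), builds a proper $\catw$-coresolution $\wti W$ of $M$ whose augmentation is both $\ahom(\catw,-)$- and $\ahom(-,\catw)$-exact, and splices $W$ with $\wti W$ to produce a totally $\catw$-acyclic complex exhibiting $M$ as an object of $\opg(\catw)$.

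The concern you flag at the end is the real one, and as written your argument does not close it. Definition~\ref{ab01}---the paper's codification of the Auslander--Buchweitz existence theorem---requires $\catw$ to be closed under direct summands, and this is not among the hypotheses of Proposition~\ref{prop0701}; the standing conventions of Definition~\ref{notation01} only make $\catw$ full, additive, and isomorphism-closed. Fact~\ref{sgcsummary}, which you correctly invoke, does supply that $\opg(\catw)$ is exact (so closed under direct summands) and that $\catw$ is a cogenerator for $\opg(\catw)$, but it does not supply closure of $\catw$ itself under direct summands, so the appeal to Definition~\ref{ab01} is not available. The remaining steps---the dimension shift through a bounded $\catw$-resolution of $K$ giving $\aext^{\geq1}(M,K)=0$, the splitting, and the conclusion via summand-closure of $\opg(\catw)$---are fine. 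The paper's splicing construction avoids the Auslander--Buchweitz machinery entirely, and that is precisely what lets the proposition hold under these weaker hypotheses; to repair your route you would need either to add summand-closure of $\catw$ as a hypothesis or to verify that the existence part of the Auslander--Buchweitz approximation survives without it, which the paper does not claim.
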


\begin{proof}
We prove part~\eqref{prop0701a}; the proof of~\eqref{prop0701b} is dual.
One implication is covered by the containment
$\opg(\catw)\subseteq \cpltcorescatw$ from Remark~\ref{cplt03}.

For the converse, assume that $M$ is in $\cpltcorescatw$
and fix a Tate $\catw$-resolution $T\xra\alpha W\xra\gamma M$.
By assumption, the augmented resolution $W^+$ is $\ahom(\catw,-)$-exact.
We claim that it is also $\ahom(-,\catw)$-exact.
Indeed, since $\catw$ is an injective cogenerator for $\catx$,
we have $M\perp\catw$, and 
the condition $\catw\subseteq\catx$ implies that
$W_i\perp\catw$ for each $i\geq 0$.
A standard induction argument yields the claim.

We claim that $\catw\perp M$. As in the proof of Lemma~\ref{cplt04},
the object $\coker(\partial^W_i)$ is in $\opg(\catw)$ for all $i\gg 0$.
Hence, Fact~\ref{sgcsummary} implies that $\catw\perp\coker(\partial^W_i)$ for all $i\gg 0$.
Since $\catw\perp W_i$ for all $i$, a standard induction argument yields the claim.

Since $\catw$ is an injective cogenerator for $\catx$,
the object $M$ admits a proper $\catw$-coresolution
$M\xra\simeq\wti W$. 
Hence, the augmented coresolution $^+\wti W$ is $\ahom(-,\catw)$-exact.
A standard induction argument,
using the conditions $\catw\perp M$ and $\catw\perp \wti W_i$,
shows that  $^+\wti W$ is also $\ahom(\catw,-)$-exact.

Splice the resolutions $W$ and $\wti W$ to construct
the following exact sequence in $\catw$
$$\widehat W=\qquad \cdots
\xra{\partial^W_2} W_1
\xra{\partial^W_1} W_0
\xra{\hspace{4.5mm}} \wti W_{0}
\xra{\partial^{\wti W}_0} \wti W_{-1}
\xra{\partial^{\wti W}_{-1}}\cdots
$$
such that $M\cong\coker(\partial^{\comp W}_1)=\coker(\partial^{W}_1)$.
Since $W^+$ and $^+\wti W$ are $\ahom(\catw,-)$-exact and $\ahom(-,\catw)$-exact,
it follows that $\comp W$ is
a complete resolution of $M$, so
$M$ is in $\opg(\catw)$, by definition.
\end{proof}

The following characterizations of $\cpltrescatw$ and $\cpltcorescatv$ 
are akin to~\cite[(3.1)]{avramov:aratc}.

\begin{thm} \label{cplt05}
Assume that $\catw$ is closed under kernels of epimorphisms and that $\catw\perp\catw$.
Assume that $\catv$ is closed under cokernels of monomorphisms and $\catv\perp\catv$.
\begin{enumerate}[\quad\rm(a)]
\item \label{cplt05item1}
An object $M\in\cata$ admits a Tate $\catw$-resolution 
$T\xra\alpha W\xra\gamma M$
(such that each $\alpha_n$ is a split surjection)
if and only if
$\catpd{G(W)}(M)<\infty$. Hence, we have $\cpltrescatw=\finrescat{G(W)}$,
so the category $\cpltrescatw$ is closed under direct summands and
satisfies the two-of-three property.
\item \label{cplt05item2}
An object $N\in\cata$ admits a Tate $\catv$-coresolution 
$N\xra\delta V\xra\beta S$
(such that each $\beta_n$ is a split injection)
if and only if
$\catid{G(V)}(N)<\infty$. Hence, we have
$\cpltcorescatv=\fincorescat{G(V)}$, so
the category $\cpltcorescatv$ is closed under direct summands and
satisfies the two-of-three property.
\end{enumerate}
\end{thm}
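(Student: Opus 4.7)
My approach is to establish part~\eqref{cplt05item1} and deduce part~\eqref{cplt05item2} by dualizing every step. The forward inclusion $\cpltrescatw\subseteq\finrescat{G(W)}$ is exactly Lemma~\ref{cplt04}. For the reverse inclusion, I would specialize Lemma~\ref{cplt01}\eqref{cplt01item1} to $\catx:=\opg(\catw)$. Under the standing hypotheses ($\catw\perp\catw$ and $\catw$ closed under kernels of epimorphisms), Fact~\ref{sgcsummary} delivers exactly what that lemma demands of $\catx$: the category $\opg(\catw)$ is exact, is closed under kernels of epimorphisms, and admits $\catw$ as both an injective cogenerator and a projective generator. The proof of Lemma~\ref{cplt01}\eqref{cplt01item1} only uses the $(\catx,\catw)$ half of its hypotheses, so the specialization is legitimate. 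This produces, for every $M\in\finrescat{G(W)}$, a Tate $\catw$-resolution $T\xra{\alpha}W\xra{\gamma}M$ in which each $\alpha_n$ is a split surjection; in particular $\finrescat{G(W)}\subseteq\cpltrescatw$ and the split-surjection refinement in the parenthetical is automatic.

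Once the equality $\cpltrescatw=\finrescat{G(W)}$ is in place, the remaining two assertions---closure under direct summands and the two-of-three property---follow from features of $\opg(\catw)$ already recorded in Fact~\ref{sgcsummary}: the category $\opg(\catw)$ is an exact subcategory of $\cata$ (hence closed under extensions and direct summands) and is closed under kernels of epimorphisms. A standard dimension-shifting argument then yields two-of-three for $\finrescat{G(W)}$: given a short exact sequence $0\to A\to B\to C\to 0$ with two of $A,B,C$ in $\finrescat{G(W)}$, one compares bounded $\opg(\catw)$-resolutions via the horseshoe construction, takes appropriate syzygies, and invokes the closure of $\opg(\catw)$ under kernels of epimorphisms to conclude that the third object also has finite $\opg(\catw)$-projective dimension. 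Closure of $\finrescat{G(W)}$ under direct summands then drops out by applying two-of-three to the split sequence $0\to M\to M\oplus M'\to M'\to 0$, using that $\opg(\catw)$ itself is closed under direct summands.

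I anticipate no serious mathematical obstacle: the proof is essentially the assembly of Lemma~\ref{cplt04}, Lemma~\ref{cplt01}\eqref{cplt01item1}, and Fact~\ref{sgcsummary}. The one delicate bookkeeping point is the verification that Lemma~\ref{cplt01} can be invoked using only the $(\catx,\catw)$-data that one controls, which holds because its part~\eqref{cplt01item1} never touches the auxiliary $(\caty,\catv)$-data. Accordingly the main ``work'' of the argument has already been done in the preceding lemmas, and the theorem amounts to packaging them and combining with the classical dimension-shifting machinery for resolving subcategories in the sense of Auslander--Buchweitz.
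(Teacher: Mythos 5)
Your handling of the central equivalence is exactly the paper's: the containment $\cpltrescatw\subseteq\finrescat{G(W)}$ is Lemma~\ref{cplt04}, and the reverse containment (with the split-surjection refinement) comes from Lemma~\ref{cplt01}\eqref{cplt01item1} applied with $\catx=\opg(\catw)$, Fact~\ref{sgcsummary} supplying the needed hypotheses that $\opg(\catw)$ is exact, closed under kernels of epimorphisms, and has $\catw$ as an injective cogenerator and projective generator. Your bookkeeping remark that part~\eqref{cplt01item1} uses only the $(\catx,\catw)$-data is correct (and is moot anyway, since under the theorem's hypotheses one may simultaneously take $\caty=\opg(\catv)$). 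Up to this point there is nothing to add.

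The gap is in the final clause. The paper obtains closure under direct summands and the two-of-three property for $\finrescat{G(W)}$ by citing \cite[(3.4),(3.5)]{auslander:htmcma}; your substitute argument does not work as described. First, closure under direct summands cannot be deduced from the two-of-three property applied to $0\to M\to M\oplus M'\to M'\to 0$: that application would require \emph{two} of the three terms to be known to lie in $\finrescat{G(W)}$, and you only know the middle one --- membership of the complementary summand $M'$ is precisely what is unknown. (The genuine argument is a syzygy argument in the spirit of \cite[(3.3),(3.5)]{auslander:htmcma}, exploiting that $\opg(\catw)$ itself is closed under summands; it is not a consequence of two-of-three.) Second, the horseshoe comparison you invoke for two-of-three is not available for an arbitrary short exact sequence in $\cata$: objects of $\opg(\catw)$ have no lifting property against epimorphisms of $\cata$, and the horseshoe lemmas at your disposal (Lemma~\ref{horseshoe02}, or \cite[(1.9)]{sather:gcac}) require the sequence to be $\ahom(\catw,-)$-exact and the resolutions to be proper, whereas the resolutions computing $\catpd{G(W)}$ need be neither. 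Both closure assertions are true, but the clean repair is to quote \cite[(3.4),(3.5)]{auslander:htmcma}, as the paper does, rather than to run a naive dimension shift.
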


\begin{proof}
The desired equivalences follow from Lemmas~\ref{cplt04} and~\ref{cplt01},
using Fact~\ref{sgcsummary}. The properties of $\cpltrescatw$ and $\cpltcorescatv$
follow from~\cite[(3.4),(3.5)]{auslander:htmcma}. 
\end{proof}

The next result contains Theorem~\ref{thmD} from the introduction.

\begin{thm} \label{cor666}
Let $R$ be a commutative ring, and let $C$ be a semidualizing $R$-module.
With $\catpc=\catpc(R)$ and $\catic=\catic(R)$,
one has
$\operatorname{res}\widehat{\opg(\catpc)}=\operatorname{res}\ol{\catpc}$
and
$\operatorname{cores}\widehat{\opg(\catic)}=\operatorname{cores}\ol{\catic}$.
Also, the categories $\operatorname{res}\widehat{\opg(\catpc)}$
and $\operatorname{cores}\widehat{\opg(\catic)}$ are closed under direct summands and
satisfy the two-of-three property.
\end{thm}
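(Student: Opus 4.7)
The plan is to recognize that this theorem is essentially a specialization of Theorem~\ref{cplt05} to the categories $\catw = \catpc(R)$ and $\catv = \catic(R)$, so the task reduces to verifying the hypotheses of that theorem in the present setting. Once those hypotheses are checked, the equalities and structural properties are immediate.

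First I would collect the relevant facts about $\catpc(R)$ from Fact~\ref{disc01}. That discussion tells us that $\catpc(R)$ is exact, closed under kernels of epimorphisms, and satisfies $\catpc(R) \perp \catpc(R)$. These are exactly the hypotheses on $\catw$ required in Theorem~\ref{cplt05}\eqref{cplt05item1}. Applying that theorem with $\catw = \catpc(R)$ yields
\[
\cpltrescat{\catpc} = \finrescat{G(\catpc)},
\]
which in the notation of the statement is the asserted equality $\operatorname{res}\ol{\catpc} = \operatorname{res}\widehat{\opg(\catpc)}$, and it also delivers closure under direct summands and the two-of-three property for this category.

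Next I would run the dual argument. Fact~\ref{disc01} also records that $\catic(R)$ is exact, closed under cokernels of monomorphisms, and satisfies $\catic(R) \perp \catic(R)$. These are the hypotheses on $\catv$ in Theorem~\ref{cplt05}\eqref{cplt05item2}, so applying it with $\catv = \catic(R)$ gives
\[
\cpltcorescat{\catic} = \fincorescat{G(\catic)},
\]
which is the stated identity $\operatorname{cores}\ol{\catic} = \operatorname{cores}\widehat{\opg(\catic)}$, together with the corresponding closure properties.

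There is no real obstacle here; the content is packaged into the general Theorem~\ref{cplt05}, and the only thing to do is verify that $\catpc(R)$ and $\catic(R)$ satisfy its hypotheses, which is recorded in Fact~\ref{disc01}. If anything, the one point to double-check is that the closure and $\perp$-hypotheses from Fact~\ref{disc01} really are stated in the form needed by Theorem~\ref{cplt05}; since Fact~\ref{disc01} gives them verbatim, the proof collapses to two one-line applications of Theorem~\ref{cplt05}.
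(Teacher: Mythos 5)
Your proposal is correct and is essentially identical to the paper's proof: the paper likewise disposes of the theorem in one line by citing Fact~\ref{disc01} to verify that $\catpc(R)$ and $\catic(R)$ satisfy the hypotheses of Theorem~\ref{cplt05}\eqref{cplt05item1} and~\eqref{cplt05item2}, respectively. The only superfluous detail in your write-up is the mention of exactness of these categories, which Theorem~\ref{cplt05} does not actually require.
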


\begin{proof}
Fact~\ref{disc01} implies 
that $\catpc(R)$ satisfies the hypotheses of Theorem~\ref{cplt05}\eqref{cplt05item1}
and that $\catic(R)$ satisfies the hypotheses of Theorem~\ref{cplt05}\eqref{cplt05item2}.
\end{proof}

The next result is the key for well-definedness and functoriality of Tate cohomology.
The proof is almost identical to that of~\cite[(5.3)]{avramov:aratc}.

\begin{lem} \label{comp01}
Let $M,M',N,N'$ be objects in $\cata$. 
Assume that $M$ and $M'$ admit Tate $\catw$-resolutions
$T\xra{\alpha}W\xra{\gamma} M$ and
$T'\xra{\alpha'}W'\xra{\gamma'} M'$, and that 
$N$ and $N'$ admit  Tate $\catv$-coresolutions
$N\xra{\delta} V\xra{\beta} S$ and
$N'\xra{\delta'} V'\xra{\beta'} S'$
\begin{enumerate}[\quad\rm(a)]
\item \label{comp01item1}
For each morphism $f\colon M\to M'$ there is
a morphism $\ol{f}\colon W\to W'$, unique up to homotopy,
making the right-most square in the next diagram commute
$$
\xymatrix{
T \ar[r]^{\alpha} \ar[d]_{\comp{f}} & W \ar[r]^{\gamma} \ar[d]_{\ol{f}} & M \ar[d]_f \\
T' \ar[r]^{\alpha'}  & W' \ar[r]^{\gamma'}  & M'
}
$$
and for each such $\ol{f}$ there exists
a morphism $\comp{f}\colon T\to T'$, unique up to homotopy,
making the left-most square in the diagram commute up to homotopy.
If $f$ is an isomorphism, then  $\ol{f}$ and $\comp{f}$ are
homotopy equivalences.
\item \label{comp01item2}
For each morphism $g\colon N\to N'$ there is
a morphism $\ol{g}\colon V\to V'$, unique up to homotopy,
making the left-most square in the next diagram commute
$$
\xymatrix{
N \ar[r]^{\delta} \ar[d]_{\comp{g}} & V \ar[r]^{\beta} \ar[d]_{\ol{g}} & S \ar[d]_f \\
N' \ar[r]^{\delta'}  & V' \ar[r]^{\beta'}  & S'
}
$$
and for each such $\ol{g}$ there exists
a morphism $\comp{g}\colon S\to S'$, unique up to homotopy,
making the right-most square in the diagram commute up to homotopy.
If $g$ is an isomorphism, then $\ol{g}$ and $\comp{g}$ are
homotopy equivalences. \qed
\end{enumerate}
\end{lem}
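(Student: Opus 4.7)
The plan is to prove part (a) in detail; part (b) follows by a formally dual argument exchanging roles of resolutions/coresolutions, kernels/cokernels, and the two exactness conditions on totally acyclic complexes.

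For the existence and uniqueness of $\ol{f}$, I would invoke the standard comparison theorem for proper resolutions \cite[(1.8.a)]{sather:gcac} already used in Definition~\ref{rel02}. Since $\gamma'\colon W'\to M'$ is a proper $\catw$-resolution, the augmented complex $(W')^+$ is $\ahom(\catw,-)$-exact. As each $W_n$ is in $\catw$, the morphism $f\colon M\to M'$ lifts to $\ol{f}\colon W\to W'$ with $\gamma'\ol{f}=f\gamma$, uniquely up to homotopy.

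The construction of $\comp{f}$ proceeds in two stages. First, fix $N$ so large that $\alpha_n$ and $\alpha'_n$ are isomorphisms for all $n\geq N$, and set $\comp{f}_n:=(\alpha'_n)^{-1}\ol{f}_n\alpha_n$ for $n\geq N$. These maps form a strict chain map on $T_{\geq N}$ making $\alpha'\comp{f}=\ol{f}\alpha$ hold on the nose in degrees $\geq N$. Second, extend $\comp{f}$ by downward induction. The key input is that $T'$ is totally $\catw$-acyclic, hence $\ahom(\catw,-)$-exact, while $T_n\in\catw$ for every $n$. Given $\comp{f}_{n+1}$, the composite $\comp{f}_{n+1}\partial^T_{n+1}$ lands in $\ker(\partial^{T'}_n)=\im(\partial^{T'}_{n+1})$, and applying $\ahom(T_n,-)$ to the exact complex $T'$ produces a lift $\comp{f}_n\colon T_n\to T'_n$ with $\partial^{T'}_{n+1}\comp{f}_n=\comp{f}_{n+1}\partial^T_{n+1}$. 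In the same step one builds a homotopy $s_n\colon T_n\to W'_{n+1}$ witnessing $\alpha'_n\comp{f}_n-\ol{f}_n\alpha_n=\partial^{W'}_{n+1}s_n+s_{n-1}\partial^T_n$; in degrees $\geq N$ one may take $s_n=0$, and in lower degrees the existence of $s_n$ is obtained by the same lifting argument, now using that $(W')^+$ is $\ahom(\catw,-)$-exact. Uniqueness of $\comp{f}$ up to homotopy is obtained by running the same downward construction on the difference of two candidate liftings to produce an explicit null-homotopy.

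The homotopy-equivalence assertions follow formally: if $f$ is an isomorphism, apply the construction to $f^{-1}$ to obtain $\ol{f^{-1}}$ and $\comp{f^{-1}}$; then $\ol{f^{-1}}\ol{f}$ and $\comp{f^{-1}}\comp{f}$ are lifts of $\id_M$ through the respective diagrams, so by uniqueness they are homotopic to $\id_W$ and $\id_T$, and the reverse compositions are handled identically. The main obstacle is the bookkeeping of the downward induction: one must simultaneously maintain strict commutativity of $\comp{f}$ with the differential of $T'$ and produce the homotopy witnessing $\alpha'\comp{f}\simeq\ol{f}\alpha$, both under the single hypothesis that $T'$ is totally $\catw$-acyclic and that $T$ lives in $\catw$. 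Once this coordinated lifting is set up, everything else is essentially the same argument as in \cite[(5.3)]{avramov:aratc}.
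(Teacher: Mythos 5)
Your overall scaffolding matches the route the paper takes (it simply cites \cite[(5.3)]{avramov:aratc} and notes the proof is almost identical): lift $f$ to $\ol{f}$ through proper $\catw$-resolutions via \cite[(1.8.a)]{sather:gcac}, set $\comp{f}_n=(\alpha'_n)^{-1}\ol{f}_n\alpha_n$ in degrees where $\alpha_n,\alpha'_n$ are isomorphisms, extend $\comp{f}$ by downward induction, build the homotopy $s$ by downward induction, and deduce the homotopy-equivalence claims formally from uniqueness. That is the correct plan, and your closing argument for homotopy equivalence is fine.

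The genuine problem is the exactness you invoke to justify the downward induction. You state that the key input is that ``$T'$ is totally $\catw$-acyclic, hence $\ahom(\catw,-)$-exact, while $T_n\in\catw$,'' and you propose applying $\ahom(T_n,-)$ to $T'$. That pairing (acyclic target, projective-type source) is what drives an \emph{upward} comparison of resolutions; it does not produce $\comp{f}_n$ from $\comp{f}_{n+1}$. The equation to solve is $\comp{f}_n\partial^T_{n+1}=\partial^{T'}_{n+1}\comp{f}_{n+1}$ (the identity you wrote, $\partial^{T'}_{n+1}\comp{f}_n=\comp{f}_{n+1}\partial^T_{n+1}$, does not even compose), which is an \emph{extension} problem along $\partial^T_{n+1}\colon T_{n+1}\to T_n$. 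One checks that $\partial^{T'}_{n+1}\comp{f}_{n+1}$ kills $\im(\partial^T_{n+2})$, so it is a cocycle in the complex $\ahom(T,T'_n)$; the extension $\comp{f}_n$ exists precisely because $\ahom(T,T'_n)$ is exact, and that exactness comes from $T'_n\in\catw$ together with $T$ (the source, not $T'$) being $\ahom(-,\catw)$-exact. The same swap is needed in the homotopy step: passing from $s_n$ to $s_{n-1}$ uses exactness of $\ahom(T,W'_n)$, i.e.\ $W'_n\in\catw$ and $T$ being $\ahom(-,\catw)$-exact, rather than ``$(W')^+$ is $\ahom(\catw,-)$-exact'' as you claim. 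Since both $T$ and $T'$ are totally $\catw$-acyclic, the correct ingredient is available, but as written your argument invokes the wrong half of total acyclicity and therefore does not justify the lifts; correcting the citations as above repairs the proof.
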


What follows is a horseshoe lemma for Tate (co)resolutions 
like~\cite[(5.5)]{avramov:aratc}.  
The proof is similar to that of~\cite[(5.5)]{avramov:aratc},
but  it is different enough to merit inclusion.

\begin{lem} \label{horseshoe02}
Assume that $\catw$ is closed under kernels of epimorphisms and that $\catw\perp\catw$.
Assume that $\catv$ is closed under cokernels of monomorphisms and $\catv\perp\catv$.
\begin{enumerate}[\quad\rm(a)]
\item \label{horseshoe02item1}
Fix  an exact sequence 
$0\to M'\xra{\xi}M\xra{\zeta} M''\to 0$ in $\cata$
that is $\ahom(\catw,-)$-exact.  Assume that $M'$ and $M''$ admit Tate
$\catw$-resolutions
$T'\xra{\alpha'}W'\xra{\gamma'} M'$ and 
$T''\xra{\alpha''}W''\xra{\gamma''} M''$
such that $\alpha_n'$ and $\alpha_n''$ are 
split surjections for all $n\in\mathbb Z$
and isomorphisms
for each $n\geq d$.
Then $M$ admits a Tate
$\catw$-resolution
$T\xra{\alpha}W\xra{\gamma} M$ such that $\alpha_n$ is an  isomorphism
for each $n\geq d$ and such that 
there is a commutative diagram of morphisms
\begin{equation} 
\begin{split}
\label{diag12}
\xymatrix{
0 \ar[r] 
& T' \ar[r]^{\comp{\xi}}\ar[d]_{\alpha'} 
& T \ar[r]^{\comp{\zeta}}\ar[d]_{\alpha} 
& T'' \ar[r]\ar[d]_{\alpha''} 
& 0\\
0 \ar[r] 
& W' \ar[r]^{\ol{\xi}}\ar[d]_{\gamma'} 
& W \ar[r]^{\ol{\zeta}}\ar[d]_{\gamma} 
& W'' \ar[r]\ar[d]_{\gamma''} 
& 0 \\
0 \ar[r] & M' \ar[r]^{\xi} & M \ar[r]^{\zeta} & M'' \ar[r] & 0
}
\end{split}
\end{equation}
wherein the top two rows are degreewise split exact.
\item \label{horseshoe02item2}
Fix  an exact sequence 
$0\to N'\xra{\rho}N\xra{\tau} N''\to 0$ in $\cata$
that is $\ahom(-,\catv)$-exact.  Assume that $N'$ and $N''$ admit Tate
$\catv$-coresolutions
$N'\xra{\delta'}V'\xra{\beta'} S'$ and 
$N''\xra{\delta''}V''\xra{\beta''} S''$
such that $\beta_n'$ and $\beta_n''$ are 
split injections for all $n\in\mathbb Z$
and  isomorphisms
for each $n\leq d$.
Then $N$ admits a Tate
$\catv$-coresolution
$N\xra{\delta}V\xra{\beta} S$ such that $\beta_n$ is an  isomorphism
for each $n\leq d$ and such that 
there is a commutative diagram of morphisms
$$\xymatrix{
0 \ar[r] 
& N' \ar[r]^{\rho}\ar[d]_{\delta'} 
& N \ar[r]^{\tau}\ar[d]_{\delta} 
& N'' \ar[r]\ar[d]_{\delta''} 
& 0\\
0 \ar[r] 
& V' \ar[r]^{\ol{\rho}}\ar[d]_{\beta'} 
& V \ar[r]^{\ol{\tau}}\ar[d]_{\beta} 
& V'' \ar[r]\ar[d]_{\beta''} 
& 0 \\
0 \ar[r] & S' \ar[r]^{\comp{\rho}} & S \ar[r]^{\comp{\tau}} & S'' \ar[r] & 0
}
$$
wherein the bottom two rows are degreewise split exact.
\end{enumerate}
\end{lem}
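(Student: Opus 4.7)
The plan is to construct diagram~\eqref{diag12} in two stages: build the middle row (and the bottom square) using the classical horseshoe lemma, then build the top row together with $\alpha$ by an inductive construction that exploits the split surjectivity of $\alpha'$ and $\alpha''$. For the first stage, since $0\to M'\xra{\xi}M\xra{\zeta}M''\to 0$ is $\ahom(\catw,-)$-exact and $\gamma',\gamma''$ are proper $\catw$-resolutions, the standard horseshoe construction produces a proper $\catw$-resolution $\gamma\colon W\to M$ with $W_n=W'_n\oplus W''_n$ and
\[
\partial^W_n=\begin{pmatrix}\partial^{W'}_n & \tilde\eta_n\\ 0 & \partial^{W''}_n\end{pmatrix}
\]
for certain connecting maps $\tilde\eta_n\colon W''_n\to W'_{n-1}$, fitting into a degreewise split exact sequence of complexes $0\to W'\xra{\ol\xi}W\xra{\ol\zeta}W''\to 0$ compatible with $\xi$ and $\zeta$.

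For the top row I set $T_n=T'_n\oplus T''_n$ with $\comp\xi_n,\comp\zeta_n$ the canonical inclusion and projection, so this row is degreewise split exact by construction. In degrees $n\geq d$ I use the isomorphisms $\alpha'_n,\alpha''_n$ to identify $T_n\cong W_n$ via $\alpha_n:=\alpha'_n\oplus\alpha''_n$, and I take the differentials of $T$ to coincide with those of $W$ in the stable range $n\geq d+1$, yielding $\eta_n=\tilde\eta_n$ there. For $n\leq d$ I keep $\alpha_n$ in block-diagonal form $\alpha'_n\oplus\alpha''_n$ and construct the upper-right differential entries $\eta_n\colon T''_n\to T'_{n-1}$ inductively on decreasing $n$, subject to (i) $\eta_n\partial^{T''}_{n+1}+\partial^{T'}_n\eta_{n+1}=0$ (so $\partial^T\partial^T=0$) and (ii) $\alpha'_{n-1}\eta_n=\tilde\eta_n\alpha''_n$ (so $\alpha$ is a chain map).

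The main obstacle is the simultaneous fulfillment of (i) and (ii). At each step I would first choose any lift $\eta_n^{(0)}$ of $\tilde\eta_n\alpha''_n$ along the split surjection $\alpha'_{n-1}$, which ensures (ii), and then correct by a map $\lambda\colon T''_n\to K$ with $K:=\ker(\alpha'_{n-1})$ to enforce (i). Since $\catw$ is closed under kernels of epimorphisms, $K\in\catw$; and the obstruction $\mu:=\eta_n^{(0)}\partial^{T''}_{n+1}+\partial^{T'}_n\eta_{n+1}$ takes values in $K$, since applying $\alpha'_{n-1}$ reduces it to the analogous $\partial^W\partial^W=0$ identity together with the chain map properties of $\alpha'$ and $\alpha''$. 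A short calculation, using the stable identifications in degrees $\geq d+1$ and the inductively established relations, shows that $\mu$ is a cycle when viewed as an element of $\ahom(T'',K)$; by the $\ahom(-,\catw)$-exactness of the totally $\catw$-acyclic complex $T''$, $\mu$ is a boundary, yielding the required $\lambda$.

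Finally, $T$ is totally $\catw$-acyclic: exactness follows from the long exact sequence in homology of the degreewise split short exact sequence $0\to T'\to T\to T''\to 0$ together with the exactness of $T'$ and $T''$; for $W_0\in\catw$, applying $\ahom(W_0,-)$ or $\ahom(-,W_0)$ yields a (still degreewise split) short exact sequence of complexes whose outer terms are exact by the total $\catw$-acyclicity of $T'$ and $T''$, forcing the middle to be exact. By construction the diagram commutes and $\alpha_n$ is an isomorphism for $n\geq d$. Part~(b) follows by the dual argument.
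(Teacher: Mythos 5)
Your argument is correct and follows the paper's overall template — relative horseshoe lemma for the $W$-row, $T_n=T'_n\oplus T''_n$, the choice $\eta_n=(\alpha'_{n-1})^{-1}\tilde\eta_n\alpha''_n$ in the stable range $n>d$, and an obstruction calculus keyed to the $\ahom(-,\catw)$-exactness of $T''$ — but it diverges genuinely in how the low-degree data are produced. The paper builds $T_{<d}$ by a second application of the relative horseshoe lemma, to the $\ahom(-,\catw)$-exact sequence of cokernels $\coker(\partial^{T'}_{d+1})\to\coker(\partial^T_{d+1})\to\coker(\partial^{T''}_{d+1})$; this fixes the off-diagonal differential entries $g_n$ for $n<d$ outright, at the price of letting $\alpha$ be merely upper-triangular, $\alpha_n=\left(\begin{smallmatrix}\alpha'_n & h_n\\ 0 & \alpha''_n\end{smallmatrix}\right)$, with a correction $h_n\colon T''_n\to W'_n$ constructed by descending induction. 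You keep $\alpha=\alpha'\oplus\alpha''$ strictly block-diagonal and instead construct the off-diagonal differential entry $\eta_n$ by descending induction, subject simultaneously to the chain-complex and chain-map constraints, with the obstruction living in $\ahom(T''_{n+1},\ker\alpha'_{n-1})$ (where $\ker\alpha'_{n-1}\in\catw$ by closure under kernels of epimorphisms) rather than in $\ahom(T''_{n+1},W'_n)$. Your route spares the second invocation of the relative horseshoe lemma and the attendant check that $\coker(\partial^{W''}_{d+1})\in\opg(\catw)$, and it makes the commutativity of the upper squares and the split surjectivity of every $\alpha_n$ automatic; the paper's route distributes the work across two independent families $(g_n)$ and $(h_n)$ and reuses an existing lemma. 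The trade-off on your side is that the base relation $\eta_{d+1}\partial^{T''}_{d+2}+\partial^{T'}_{d+1}\eta_{d+2}=0$ must be verified explicitly to seed the induction (this is exactly the paper's displayed computation for $n>d$), and a small slip: in the stable range you should write $\eta_n=(\alpha'_{n-1})^{-1}\tilde\eta_n\alpha''_n$ rather than $\eta_n=\tilde\eta_n$, since the identification $T_n\cong W_n$ is through $\alpha_n$, not the identity.
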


\begin{proof} 
We prove part~\eqref{horseshoe02item1}; the proof of~\eqref{horseshoe02item2} is dual.
The lower half of the diagram~\eqref{diag12} is constructed in
the relative horseshoe lemma~\cite[(1.9.a)]{sather:gcac}.
Note that we have 
$W_n=W_n'\oplus W_n''$ for each $n\in\mathbb Z$,
and $\ol\xi_n=\left(\begin{smallmatrix}\id_{W_n'} \\ 0\end{smallmatrix}\right)$
and $\ol\zeta_n=(0 \,\,\,\, \id_{W_n''})$.
Furthermore, we have
$
\partial^W_n=\left(\begin{smallmatrix}\partial^{W'}_n & f_n \\ 0 & \partial^{W''}_n\end{smallmatrix}\right)
$
for some
$f_n\in\ahom(W_n'',W_{n-1}')$;
and the equation $\partial^W_n\partial^W_{n+1}=0$
implies that
\begin{equation} \label{diff01}
\partial^{W'}_{n}f_{n+1}+f_n\partial^{W''}_{n+1}=0.
\end{equation}
We set
$T_n=T_n'\oplus T_n''$ for each $n\in\mathbb Z$,
and $\comp\xi_n=\left(\begin{smallmatrix}\id_{T_n'} \\ 0\end{smallmatrix}\right)$
and $\comp\zeta_n=(0 \,\,\,\, \id_{T_n''})$.

The proof will be complete once we  construct morphisms 
$g_n\in\ahom(T_n'',T_{n-1}')$ and
$h_n\in\ahom(T_n'',W_{n}')$ for each $n\in\mathbb Z$
such that 
\begin{gather} \label{diff03}
\partial^{T'}_{n}g_{n+1}+g_n\partial^{T''}_{n+1}=0\\
 \label{diff02}
h_n\partial^{T''}_{n+1}=\partial^{W'}_{n+1}h_{n+1}+f_{n+1}\alpha''_{n+1}-\alpha'_ng_{n+1}.
\end{gather}
Indeed, once this is done we set
\begin{align*}
\partial^T_n
&=\left(\begin{matrix}\partial^{T'}_n & g_n \\ 0 & \partial^{T''}_n\end{matrix}\right)
&&\text{and}
&\alpha_n
&=\left(\begin{matrix}\alpha'_n & h_n \\ 0 & \alpha''_n\end{matrix}\right).
\end{align*}
Using the equation~\eqref{diff03},
it is straightforward to show  that $\partial^T$ makes $T$ into a chain complex
such that $\comp\xi$ and $\comp\zeta$ are chain maps.
Similarly, the equation~\eqref{diff02} implies that $\alpha$ is a chain map.
Since the matrices defining these maps are upper-triangular,
it follows readily that the diagram~\eqref{diag12} commutes, using
the fact that the horizontal maps in the top two rows are the canonical
injections and surjections. Since $\alpha_n'$ and $\alpha_n''$ are isomorphisms
for each $n\geq d$, the snake lemma implies that $\alpha_n$ is an isomorphism
for each $n\geq d$. Similarly, $\alpha_n'$ and $\alpha_n''$ are surjections
for each $n\in\mathbb Z$, the snake lemma implies that $\alpha_n$ is a surjection
for each $n\in\mathbb Z$. Finally, the fact that $\catw$ is closed under kernels of epimorphisms
implies that each $\ker(\alpha_n)\in\catw$; so, the condition $\catw\perp\catw$
implies that each $\alpha_n$ is a split surjection. 
Since the top row $\mathbb T$ of~\eqref{diag12} is degreewise split exact,
the sequence $\ahom(U,\mathbb T)$ is exact for each $U\in\catw$. 
Since $\ahom(U,T')$ and $\ahom(U,T'')$ are exact, a long exact
sequence argument shows that $\ahom(U,T)$ is also exact.
In summary, we conclude that $T$ is $\ahom(\catw,-)$-exact, and a similar
argument shows that it is $\ahom(-,\catw)$-exact.

The assumption that $\alpha_n'$ and $\alpha_n''$ are  isomorphisms
for each $n\geq d$ implies that
\begin{align*}
\coker(\partial^{W'}_{d+1})
&\cong\coker(\partial^{T'}_{d+1})
\in\opg(\catw)
&\text{and}&&
\coker(\partial^{W''}_{d+1})
&\cong\coker(\partial^{T''}_{d+1})
\in\opg(\catw).
\end{align*}
The exact sequence of complexes
$$0\to W'_{\geq d}\to W_{\geq d}\to W''_{\geq d}\to 0$$
has associated long exact sequence
\begin{equation} \label{exact17}
0\to \coker(\partial^{W'}_{d+1})\to\coker(\partial^{W}_{d+1})
\to\coker(\partial^{W''}_{d+1})\to 0.
\end{equation}
Fact~\ref{sgcsummary}
implies that $\opg(\catw)$ is closed under extensions,
so $\coker(\partial^{W}_{d+1}) \in \opg(\catw)$.  

For each $n> d$ set $g_n=(\alpha'_{n-1})^{-1} f_{n}\alpha''_{n}$.
For each $n> d$, this yields
\begin{align*}
g_{n}\partial^{T''}_{n+1}
&=(\alpha'_{n-1})^{-1} f_{n}\alpha''_{n}\partial^{T''}_{n+1} 
=(\alpha'_{n-1})^{-1} f_{n}\partial^{W''}_{n+1}\alpha''_{n+1} \\
&=-(\alpha'_{n-1})^{-1} \partial^{W'}_{n}f_{n+1}\alpha''_{n+1} 
=-(\alpha'_{n-1})^{-1} \partial^{W'}_{n}\alpha'_{n}(\alpha'_{n})^{-1}f_{n+1}\alpha''_{n+1} \\
&=-(\alpha'_{n-1})^{-1} \alpha'_{n-1}\partial^{T'}_{n}(\alpha'_{n})^{-1}f_{n+1}\alpha''_{n+1}  
=-\partial^{T'}_{n}(\alpha'_n)^{-1} f_{n+1}\alpha''_{n+1}\\
&=-\partial^{T'}_{n}g_{n+1}.
\end{align*}
The first, fourth, and sixth equalities are by definition;
the second one holds because $\alpha''$ is a chain map;
the third one is from equation~\eqref{diff01};
and the fifth one holds because $\alpha'$ is a chain map.
This implies that~\eqref{diff03} is satisfied
for each $n> d$.  Thus, we have constructed the complex $T_{\geq d}$
and a degreewise
split exact sequence
\begin{equation} 
\label{exact19}
0\to T'_{\geq d}\xra{\comp{\xi}_{\geq d}^{}} T_{\geq d}\xra{\comp{\zeta}_{\geq d}^{}} T''_{\geq d}\to 0.
\end{equation}
For $n\geq d$, set 
$h_n=0$.
One checks readily that our choices for $g_n$ and $h_n$ satisfy~\eqref{diff02}
for all $n> d$,
and that $\alpha_n$ is an isomorphism for $n\geq d$. In particular, we have
$\coker(\partial^{T}_{d+1})\cong\coker(\partial^{W}_{d+1})$.
The sequence~\eqref{exact17} is $\ahom(-,\catw)$-exact 
because $\aext^1(\coker(\partial^{W''}_{d+1}),\catw)=0$;
see Fact~\ref{sgcsummary}.  Hence, 
the relative horseshoe lemma~\cite[(1.9.b)]{sather:gcac} yields
a commutative diagram of morphisms
\begin{equation} 
\begin{split}
\label{diag15}
\xymatrix{
0\ar[r] 
& \coker(\partial^{T'}_{d+1}) \ar[r]\ar[d]_{\simeq} 
& \coker(\partial^{T}_{d+1}) \ar[r]\ar[d]_{\simeq} 
& \coker(\partial^{T''}_{d+1}) \ar[r]\ar[d]_{\simeq} 
& 0 \\
0\ar[r] 
& T'_{<d} \ar[r]^{\left(\begin{smallmatrix}\id_{T'_{<d}} \\ 0\end{smallmatrix}\right)} 
& T_{<d} \ar[r]^{\left(\begin{smallmatrix}0 & \id_{T''_{<d}} \end{smallmatrix}\right)} 
& T''_{<d} \ar[r] & 0 \\
}
\end{split}
\end{equation}
Splice $T_{\geq d}$ and $T_{<d}$ along $\coker(\partial^T_{d+1})$
to form $T$.
Note that the differential on  $T$ is of the form
$
\partial^T_n=\left(\begin{smallmatrix}\partial^{T'}_n & g_n \\ 0 & \partial^{T''}_n\end{smallmatrix}\right)
$
and the equation 
$\partial^T_n\partial^T_{n+1}=0$ implies
that~\eqref{diff03} holds for all $n\in\mathbb Z$.
It remains to build 
the $h_n$ for $n< d$ such that~\eqref{diff02} holds for all $n\leq d$.
We generate the remaining homomorphisms by descending induction on $n$,
for which the base case ($n> d$) has already been addressed with $h_n=0$.

By induction, we assume that $h_{n+1}$ has been constructed and we find $h_n$.
Using the fact that $T''$ is $\ahom(-,W_n')$-exact, it suffices to show that
the homomorphism
$\partial^{W'}_{n+1}h_{n+1}+f_{n+1}\alpha''_{n+1}-\alpha'_ng_{n+1}$
is a cycle in $\ahom(T'',W'_n)_{n+1}$.
This is done in the following sequence wherein the first, third, and fifth equalities are routine:
\begin{align*}
(\partial^{W'}_{n+1}h_{n+1}+f_{n+1}\alpha''_{n+1}-\alpha'_ng_{n+1})\partial^{T''}_{n+2} \hspace{-5cm} \\
&=\partial^{W'}_{n+1}h_{n+1}\partial^{T''}_{n+2}
+f_{n+1}\alpha''_{n+1}\partial^{T''}_{n+2}
-\alpha'_ng_{n+1}\partial^{T''}_{n+2} \\
&=\partial^{W'}_{n+1}(\partial^{W'}_{n+2}h_{n+2}+f_{n+2}\alpha''_{n+2}-\alpha'_{n+1}g_{n+2}) \\
&\qquad+f_{n+1}\alpha''_{n+1}\partial^{T''}_{n+2}
-\alpha'_ng_{n+1}\partial^{T''}_{n+2} \\
&=\partial^{W'}_{n+1}\partial^{W'}_{n+2}h_{n+2}
+\partial^{W'}_{n+1}f_{n+2}\alpha''_{n+2}
-\partial^{W'}_{n+1}\alpha'_{n+1}g_{n+2} \\
&\qquad+f_{n+1}\alpha''_{n+1}\partial^{T''}_{n+2}
-\alpha'_ng_{n+1}\partial^{T''}_{n+2} \\
&=0
+\partial^{W'}_{n+1}f_{n+2}\alpha''_{n+2}
-\alpha'_{n}\partial^{T'}_{n+1}g_{n+2} 
+f_{n+1}\partial^{W''}_{n+2}\alpha''_{n+2}
-\alpha'_ng_{n+1}\partial^{T''}_{n+2} \\
&=(\partial^{W'}_{n+1}f_{n+2}+f_{n+1}\partial^{W''}_{n+2})\alpha''_{n+2}
-\alpha'_{n}(\partial^{T'}_{n+1}g_{n+2} +g_{n+1}\partial^{T''}_{n+2}).
\end{align*}
The second equality follows because $h_{n+1}$ satisfies the equation~\eqref{diff02};
the fourth one follows as $\alpha'$ and $\alpha''$ are morphisms and $W'$ is a complex.
The last expression in this sequence vanishes by~\eqref{diff01} and~\eqref{diff03}.
This completes the proof.
\end{proof}

The next  result provides strict resolutions, as in~\cite[(3.8)]{avramov:aratc}, for use in
Theorem~\ref{mixed01}.
Note that Lemma~\ref{cplt01} provides Tate resolutions 
satisfying  the hypotheses.

\begin{lem} \label{l01}
Assume that $\catw$ is closed under direct summands. Let $f\colon M\to M'$
be a morphism in $\cpltrescatw$, and
let $T\xra\alpha W\xra\gamma M$ and $T'\xra{\alpha'} W'\xra{\gamma'} M'$ be  Tate $\catw$-resolutions
such that $\coker(\partial^T_1),\coker(\partial^{T'}_1)\in\catx$ and 
such that $\alpha_n$ and $\alpha'_n$ are split surjections for all $n$.
\begin{enumerate}[\quad\rm(a)]
\item\label{l01a}
There exists a degreewise split exact sequence of $\cata$-complexes
\begin{equation*}
0\to \shift^{-1}X\to \wti{T}\to W\to 0
\end{equation*}
where $\wti T=(T_{\geq 0})^+$, and
satisfying the following conditions:
\\
\begin{tabular}{lll}
$\bullet$
$X$ is a bounded strict $\catw\catx$-resolution of $M$,
&\qquad \qquad&
$\bullet$
$\wti{T}$ is exact, 
\\
$\bullet$
$\wti{T}_n=0$ for each $n<-1$, 
&\qquad&
$\bullet$
$\wti{T}_{-1}$ is in $\catx$,
\\
$\bullet$
$\wti{T}_n$ is in $\catw$ for each $n\geq 0$, and
&\qquad&
$\bullet$
$\wti{T}_{\geq 0}\cong T_{\geq 0}$.
\end{tabular}
\item\label{l01b}
There exists a commutative diagram of morphisms of $\cata$-complexes
$$\xymatrix{
0\ar[r] 
& \shift^{-1}X\ar[r]\ar[d]_{\shift^{-1}f^*} 
& \wti{T}\ar[r] \ar[d]_{\wti f} 
& W\ar[r] \ar[d]_{\ol f}
& 0
\\
0\ar[r] 
& \shift^{-1}X'\ar[r]
& \wti{T'}\ar[r]  
& W'\ar[r] 
& 0
}$$
wherein each row is an exact sequence as in part~\eqref{l01a},
the morphisms $f^*$ and $\ol f$ are lifts of $f$, and
$\wti f$ is induced by a lift of $f$.
\end{enumerate}
\end{lem}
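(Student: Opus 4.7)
\begin{para}
For part~\eqref{l01a}, my plan is to take $\wti T$ as specified, namely the augmented hard truncation $(T_{\geq 0})^+$ given explicitly by $\wti T_n = T_n$ for $n \geq 0$, $\wti T_{-1} = \coker(\partial^T_1)$, and $\wti T_n = 0$ for $n < -1$, with new boundary $T_0 \to \coker(\partial^T_1)$ equal to the canonical surjection. The exactness of $T$ makes $\wti T$ exact; the hypothesis $\coker(\partial^T_1)\in\catx$ puts $\wti T_{-1}$ in $\catx$; and the remaining bullet-point properties are immediate. I then view $\wti T \to W$ as the chain map that equals $\alpha_n$ in degrees $n\geq 0$ and is zero in degree $-1$ (this is a chain map because $W_{-1}=0$). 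In each non-negative degree it is the split surjection $\alpha_n$ whose kernel is a summand of $T_n\in\catw$, hence in $\catw$ since $\catw$ is closed under direct summands; in degree $-1$ the kernel is $\coker(\partial^T_1)\in\catx$; in lower degrees the kernel vanishes. Setting $X_{n+1} = \ker(\wti T_n \to W_n)$ with differentials induced from those of $T$ produces the desired degreewise split exact sequence $0\to\shift^{-1}X\to\wti T\to W\to 0$.
\end{para}

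\begin{para}
The only remaining claim in~\eqref{l01a} is that $X\to M$ is a bounded $\catx$-resolution. Boundedness of $X$ holds because $\alpha_n$ is an isomorphism for $n\gg 0$, forcing $X_{n+1}=0$ in that range, and $X_n = 0$ for $n<0$ is built into the construction. To identify the homology, I apply the long exact sequence associated to the short exact sequence of $\cata$-complexes; combined with the exactness of $\wti T$ and the resolution identities $\HH_0(W)\cong M$ and $\HH_n(W)=0$ for $n\neq 0$, this yields $\HH_n(X)=0$ for $n\neq 0$ and $\HH_0(X)\cong M$. Together with the categorical conditions on the $X_n$ already established, this shows that $X$ is a bounded strict $\catw\catx$-resolution of $M$.
\end{para}

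\begin{para}
For part~\eqref{l01b}, my plan is to invoke Lemma~\ref{comp01}\eqref{comp01item1} to produce lifts $\ol f\colon W\to W'$ and $\comp f\colon T\to T'$ of $f$, to define $\wti f$ to equal $\comp f$ in degrees $\geq 0$ and to equal the induced map $\coker(\partial^T_1)\to\coker(\partial^{T'}_1)$ in degree $-1$, and then to take $f^*$ to be the map induced on kernels in the two short exact sequences. The map $\wti f_{-1}$ is well defined and joins correctly to $\comp f_0$ because $\comp f$ is a chain map, and the fact that $f^*$ is a lift of $f$ follows by the naturality of the homology long exact sequence coming from part~\eqref{l01a}. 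The main obstacle is that Lemma~\ref{comp01} gives only $\alpha'\comp f \simeq \ol f\alpha$ up to homotopy, whereas the right square of the target diagram requires strict equality. I plan to handle this as in \cite[(3.8)]{avramov:aratc}: since each $\alpha'_n$ is a split surjection with kernel in $\catw$ and $\catw\perp\catw$, a block-matrix modification of $\comp f$ within its homotopy class, built from fixed splittings of the $\alpha_n$ and $\alpha'_n$, produces a chain-map representative with $\alpha'\comp f = \ol f\alpha$ on the nose. Strict commutativity of the left square in the target diagram then follows immediately from the construction of $f^*$ as the restriction of $\wti f$ to kernels.
\end{para}
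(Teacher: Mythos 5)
Your part~(a) is exactly the paper's construction and is correct. For part~(b) your overall route also matches the paper: take lifts $\ol f$ and $\comp f$ from Lemma~\ref{comp01}, define $\wti f$ on $\wti T=(T_{\geq 0})^+$ by $\comp f$ in non-negative degrees and by the induced map on cokernels in degree $-1$, and let $f^*$ be the map induced on kernels. You have also spotted a point the paper passes over silently: Lemma~\ref{comp01} only makes the left square $\alpha'\comp f=\ol f\alpha$ commute \emph{up to homotopy}, whereas the induced map on kernels in the diagram of part~(b) requires this square to commute on the nose; so one genuinely must replace $\comp f$ by another representative of its homotopy class. That is a real observation and a good catch.

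The only flaw is in how you repair it: your modification invokes $\catw\perp\catw$, which is \emph{not} among the hypotheses of Lemma~\ref{l01} (the lemma assumes only that $\catw$ is closed under direct summands and that the $\alpha_n,\alpha'_n$ are split surjections). Fortunately no orthogonality is needed; degreewise split surjectivity of $\alpha'$ alone suffices. Choose splittings $\sigma'_n\colon W'_n\to T'_n$ with $\alpha'_n\sigma'_n=\id$, and let $h$ be a homotopy with $\ol f\alpha-\alpha'\comp f=\partial^{W'}h+h\partial^T$. Set $\wti h_n=\sigma'_{n+1}h_n$ and replace $\comp f$ by $\comp f+\partial^{T'}\wti h+\wti h\partial^T$. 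This is still a chain map, it lies in the same homotopy class as $\comp f$ (so it is still a lift of $f$ in the sense of Lemma~\ref{comp01}), and applying $\alpha'$ gives $\alpha'\comp f+\partial^{W'}(\alpha'\wti h)+(\alpha'\wti h)\partial^T=\alpha'\comp f+\partial^{W'}h+h\partial^T=\ol f\alpha$, which is strict commutativity. With this substitution your argument goes through verbatim and no appeal to $\catw\perp\catw$ is necessary; as written, however, you have quietly strengthened the hypotheses of the lemma, which you should not do.
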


\begin{proof}
\eqref{l01a}
The hard truncation $T_{\geq 0}$ is a proper $\catw$-resolution of
$\coker(\partial^T_1)$. Set $\wti T=(T_{\geq 0})^+$.
The morphism $\alpha\colon T\to W$ is degreewise a split surjection,
and it follows that the induced morphism
$\nu\colon \wti T\to W$ is degreewise a split surjection.
Setting $X=\shift\ker(\nu)$, yields a degreewise split exact sequence of the desired form.
Since $\wti T$ is exact,  the associated long exact sequence
shows that $X$ is a resolution of $M$.
Since $\alpha_n$ is an isomorphism for $n\gg 0$,
we conclude that $X$ is bounded. As $\alpha_n$
is a split surjection for each $n$, we have
$X_n\in\catw$ for each $n\geq 1$.
Since $X_0\cong\coker(\partial^T_1)\in\catx$,
it follows that $X$ is 
a bounded strict $\catw\catx$-resolution of $M$.

\eqref{l01b}
Lemma~\ref{comp01}\eqref{comp01item1} yields the following
commutative diagram 
$$
\xymatrix{
T \ar[r]^{\alpha} \ar[d]_{\comp{f}} & W \ar[r]^{\gamma} \ar[d]_{\ol{f}} & M \ar[d]_f \\
T' \ar[r]^{\alpha'}  & W' \ar[r]^{\gamma'}  & M'
}
$$
of morphisms of $\cata$-complexes.
Using the definitions $\wti T=(T_{\geq 0})^+$ and $\wti{T'}=(T'_{\geq 0})^+$,
it is straightforward to show that $\comp f$ induces a morphism
$\wti f\colon \wti T\to\wti{T'}$ that makes the next diagram commute
$$\xymatrix{
0\ar[r] 
& \shift^{-1}X\ar[r]
& \wti{T}\ar[r] \ar[d]_{\wti f} 
& W\ar[r] \ar[d]_{\ol f}
& 0
\\
0\ar[r] 
& \shift^{-1}X'\ar[r]
& \wti{T'}\ar[r]  
& W'\ar[r] 
& 0.
}$$
From the conditions $X=\shift\ker(\nu)$ and $X'=\shift\ker(\nu')$
it is straightforward to show that $\wti f$ induces a morphism
$f^*$ making the desired diagram commute. 

By definition, $\ol f$ is a lift of
$f$. Since $\wti{T}$ and $\wti{T'}$ are exact, the morphism $\wti{f}$ is a quasiisomorphism.
Using the induced diagrams on long exact sequences,
one readily shows that these facts imply that $f^*$ is a lift of $f$.
\end{proof}

The proof of the next result is dual to the previous proof.

\begin{lem} \label{l01'}
Assume that  $\catv$ is closed under direct summands.
Let $g\colon N\to N'$
be a morphism in $\cpltcorescatv$, and
let $N\xra\delta V\xra\beta L$ and $N'\xra{\delta'} V'\xra{\beta'} L'$ be  Tate $\catv$-coresolutions
such that $\ker(\partial^L_0),\ker(\partial^{L'}_0)\in\caty$ and 
such that $\beta_n $ and $\beta'_n$ are split surjections for all $n$.
\begin{enumerate}[\quad\rm(a)]
\item\label{l01'a}
There exists a degreewise split exact sequence of $\cata$-complexes
\begin{equation*}
0\to V\to \wti{S}\to \shift Y\to 0
\end{equation*}
where $\wti S=(S_{\geq 0})^+$, and
satisfying the following conditions:
\\
\begin{tabular}{lll}
$\bullet$
$Y$ is a bounded strict $\caty\catv$-coresolution of $N$,
&\qquad \qquad&
$\bullet$
$\wti{S}$ is exact,
\\
$\bullet$
$\wti{S}_n=0$ for each $n>1$, 
&\qquad&
$\bullet$
$\wti{S}_{1}$ is in $\caty$,
\\
$\bullet$
$\wti{S}_n$ is in $\catv$ for each $n\leq 0$, and
&\qquad&
$\bullet$
$\wti{S}_{\leq 0}\cong S_{\leq 0}$.
\end{tabular}
\item\label{l01'b}
There exists a commutative diagram of morphisms of $\cata$-complexes
$$\xymatrix{
0\ar[r] 
& V\ar[r]\ar[d]_{\ol g} 
& \wti{S}\ar[r] \ar[d]_{\wti g} 
& \shift Y\ar[r] \ar[d]_{\shift g^*}
& 0
\\
0\ar[r] 
& V'\ar[r]
& \wti{S'}\ar[r]  
& \shift Y'\ar[r] 
& 0
}$$
wherein each row is an exact sequence as in part~\eqref{l01a},
the morphisms $\ol g$ and $g^*$ are lifts of $g$,
and $\wti g$ is induced by a lift of $g$. \qed
\end{enumerate}
\end{lem}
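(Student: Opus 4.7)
The plan is to dualize the proof of Lemma \ref{l01} essentially verbatim. For part \eqref{l01'a}, the dual of the augmented hard truncation $(T_{\geq 0})^+$ is the augmented upper truncation of $L$, namely the exact complex $\wti L$ given by $0 \to \ker(\partial^L_0) \to L_0 \to L_{-1} \to \cdots$, placed so that $\wti L_1 = \ker(\partial^L_0) \in \caty$ by hypothesis, $\wti L_n = L_n \in \catv$ for $n \leq 0$, and $\wti L_n = 0$ for $n \geq 2$. The splitness hypothesis on $\beta$, read in its dual form, guarantees that the induced map $\mu \colon V \to \wti L$ is a degreewise split monomorphism. Setting $\shift Y = \coker(\mu)$ yields the degreewise split short exact sequence $0 \to V \to \wti L \to \shift Y \to 0$; the long exact homology sequence, combined with the exactness of $\wti L$, shows that $Y$ is a coresolution of $N$. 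Boundedness of $Y$ follows from the isomorphism of $\beta_n$ for $n \ll 0$, and the membership conditions $Y_0 \in \caty$, $Y_n \in \catv$ for $n \leq -1$ are built into the construction, producing a bounded strict $\caty\catv$-coresolution of $N$.

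For part \eqref{l01'b}, apply Lemma \ref{comp01}\eqref{comp01item2} to lift $g$ to morphisms $\ol g \colon V \to V'$ and $\comp g \colon L \to L'$ fitting into the commutative diagram provided there. The naturality of the upper truncation and of the kernel construction lets $\comp g$ descend to a chain map $\wti g \colon \wti L \to \wti{L'}$ whose degree-one component is the restriction of $\comp g_0$ to $\ker(\partial^L_0)$. Passing to cokernels in the short exact sequences of part \eqref{l01'a} then yields an induced map $g^* \colon Y \to Y'$, and the three squares of the target diagram commute by construction, since all maps descend from the single chain map $\comp g$.

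The main technical obstacle is to verify that $g^*$ actually lifts $g$ in the sense of Definition \ref{notation03}: under the natural identification $\HH_0(Y) \cong N$ and its primed counterpart, the induced map on $\HH_0$ must agree with $g$. The key point is that $\wti L$ and $\wti{L'}$ are exact, so $\wti g$ is a quasiisomorphism; chasing the long exact sequences in homology associated to the short exact sequences of part \eqref{l01'a}, combined with the fact that $\ol g$ is known to lift $g$, forces $\HH_0(g^*)$ to equal $g$. This is precisely the dual of the final verification in the proof of Lemma \ref{l01}\eqref{l01b}, and no new difficulty arises, since $\catv$ being closed under direct summands plays the same structural role on the coresolution side as $\catw$ does on the resolution side.
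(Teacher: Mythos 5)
Your proposal is correct and follows exactly the approach the paper intends: the paper states only that the proof is dual to that of Lemma~\ref{l01}, and you have carried out that dualization faithfully (correctly interpreting the two apparent typos in the statement, reading ``split surjections'' as split injections on $\beta_n$ and $(S_{\geq 0})^+$ as the augmented truncation ${}^+(L_{\leq 0})$). In particular, your use of the split injectivity of $\beta_n$ together with closure of $\catv$ under direct summands to get $Y_n\in\catv$ for $n\leq -1$, and your use of exactness of $\wti{L}$ and $\wti{L'}$ to show $\wti{g}$ is a quasiisomorphism and hence $g^*$ lifts $g$, are precisely the dual steps of the proof of Lemma~\ref{l01}.
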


We end this section with two examples.
The first one shows that, even when $\catw$
is a projective generator and an injective cogenerator for $\catx$, 
one may have $\catx\subsetneq\opg(\catw)$.

\begin{ex} \label{ex01}
Let $R$ be a commutative noetherian local ring with residue field $k$.
Let $\catw$ denote the category of finite rank free $R$-modules.
Let $\catx$ denote the category of finitely generated $R$-modules $G$
in $\catgp(R)$
with finite complexity, that is, such that the sequence of Betti numbers
$\{\beta^R_i(G)\}$ is bounded above by a polynomial in $i$.
(The category $\catx$ was studied by Gerko~\cite{gerko:ohd}.)
It is straightforward to show that $\catw\subseteq\catx\subseteq\opg(\catw)$
and that $\catw$ a projective generator and an injective cogenerator for $\catx$.
Furthermore, if $R$ is artinian and Gorenstein,
then $k\in\opg(\catw)$. If $R$ is not a complete intersection,
then $k\notin\catx$ because $k$ has infinite complexity,
so we have $\catx\subsetneq\opg(\catw)$ in this case.
\end{ex}

Our next example shows that some categories  are not perfectly suited for studying in this context.

\begin{ex} \label{ex02}
Let $R$ be a commutative noetherian ring.
An $R$-module $G$ is \emph{strongly Gorenstein projective}
if it is in $\catgp(R)$ with complete projective resolution 
that is periodic of period 1, that is, of the form
$\cdots \xra{\partial}P\xra{\partial}P\xra{\partial}P\xra{\partial}\cdots$.
These modules were introduced by Bennis and
Mahdou~\cite{bennis:sgpifm}
who prove that an $R$-module is  in $\catgp(R)$
if and only if it is a direct
summand of a strongly Gorenstein projective $R$-module.
Let $\catgp^{{s}}(R)$ denote the category of
strongly Gorenstein projective modules. 
Then we have $\catp(R)\subseteq\catgp^{{s}}(R)\subseteq\catgp(R)$,
and $\catp(R)$ is a projective generator and an injective cogenerator for $\catgp^{{s}}(R)$.

On the surface, it looks as though our results should apply to
the category $\catx=\catgp^{{s}}(R)$.
However, this category is not closed under direct summands in general
(see~\cite[(3.11)]{bennis:sgpifm})
so it is not exact and many our results do not apply. 
For instance, in Lemma~\ref{cplt01}, we can conclude that 
each strongly Gorenstein projective $R$-module $M$ admits a 
Tate $\catp(R)$-resolution $T\to W\to M$; however, we cannot conclude directly that
$\Ker(\partial^T_i)$ is strongly Gorenstein projective.
\end{ex}

\section{Foundations of Tate Cohomology}\label{sec4}

This section contains fundamental results on Tate cohomology functors,
including the proof of Theorem~\ref{thmA}. 

\begin{defn} \label{tate01}
Let $M,M',N,N'$ be objects in $\cata$ 
equipped with homomorphisms
$f\colon M\to M'$ and $g\colon N\to N'$.
If $M$ admits a Tate $\catw$-resolution 
$T\xra{\alpha}W\xra{\gamma} M$, define
the \emph{$n$th Tate cohomology group} $\watate^n(M,N)$ as
$$\watate^n(M,N)=\HH_{-n}(\ahom(T,N))$$
for each integer $n$.
If $M'$ also admits a Tate $\catw$-resolution 
$T'\xra{\alpha}W'\xra{\gamma} M'$, then 
let $\comp{f}$ be as in Lemma~\ref{comp01} and define
\begin{gather*}
\watate^n(f,N)=\HH_{-n}(\ahom(\comp{f},N))\colon\watate^n(M',N)\to\watate^n(M,N) \\
\watate^n(M,g)=\HH_{-n}(\ahom(T,g))\colon\watate^n(M,N)\to\watate^n(M,N'). 
\end{gather*}
The following \emph{comparison homomorphisms}
$$\wacomp^n(M,N)=\HH_{-n}(\Hom(\alpha,N))\colon \waext^n(M,N)\to\watate^n(M,N) $$
make the next diagram commute for each integer $n$
$$
\xymatrix{
\waext^n(M',N) \ar[rr]^{\waext^n(f,N)}\ar[d]^{\wacomp^n(M',N)}  
&& \waext^n(M,N) \ar[rr]^{\waext^n(M,g)}\ar[d]^{\wacomp^n(M,N)}  
&& \waext^n(M,N') \ar[d]^{\wacomp^n(M,N')}  \\
\watate^n(M',N) \ar[rr]^{\watate^n(f,N)} 
&& \watate^n(M,N) \ar[rr]^{\watate^n(M,g)}  
&& \watate^n(M,N'). 
}
$$
On the other hand, if $N$ admits a Tate $\catv$-coresolution 
$N\xra{\delta} V\xra{\beta} S$, define
the \emph{$n$th Tate cohomology group} $\avtate^n(M,N)$ as
$$\avtate^n(M,N)=\HH_{-n}(\ahom(M,S))$$
for each integer $n$.
If $N'$ also admits a Tate $\catv$-coresolution 
$N'\xra{\delta'} V'\xra{\beta'} S'$, then 
let $\comp{g}$ be as in Lemma~\ref{comp01} and define
\begin{gather*}
\avtate^n(f,N)=\HH_{-n}(\ahom(f,S))\colon\avtate^n(M',N)\to\avtate^n(M,N) \\
\avtate^n(M,g)=\HH_{-n}(\ahom(M,\comp{g}))\colon\avtate^n(M,N)\to\avtate^n(M,N'). 
\end{gather*}
The following \emph{comparison homomorphisms}
$$\avcomp^n(M,N)=\HH_{-n}(\Hom(M,\beta))\colon \avext^n(M,N)\to\avtate^n(M,N) $$
make the next diagram commute for each integer $n$
$$
\xymatrix{
\avext^n(M',N) \ar[rr]^{\avext^n(f,N)}\ar[d]^{\avcomp^n(M',N)}  
&& \avext^n(M,N) \ar[rr]^{\avext^n(M,g)}\ar[d]^{\avcomp^n(M,N)}  
&& \avext^n(M,N') \ar[d]^{\avcomp^n(M,N')}  \\
\avtate^n(M',N) \ar[rr]^{\avtate^n(f,N)} 
&& \avtate^n(M,N) \ar[rr]^{\avtate^n(M,g)}  
&& \avtate^n(M,N'). 
}
$$
\end{defn}

\begin{fact} \label{tate02'}
Let $R$ be a commutative ring, and assume that $\catw$  and $\catv$ are subcategories of
$\cata=\catm(R)$. Let $M,M',N,N'$ be $R$-modules
equipped with $R$-module homomorphisms
$f\colon M\to M'$ and $g\colon N\to N'$.
If $M$ admits a Tate $\catw$-resolution, then
each group $\watate^n(M,N)$
is an $R$-module, and the comparison
maps $\wacomp^n(M,N)$
are $R$-module homomorphisms.
If $M'$ also admits a Tate $\catw$-resolution, then 
the maps
$\watate^n(f,N)$ and $\watate^n(M,g)$
are $R$-module homomorphisms.
Similar comments hold for $\avtate^n$
and $\avcomp^n(M,N)$.
\end{fact}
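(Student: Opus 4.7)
The plan is to observe that the entire construction of $\watate^n$, $\avtate^n$, and the comparison maps in Definition~\ref{tate01} takes place inside the category of complexes of $R$-modules, where $R$-linearity is preserved by every operation involved. So this is really a bookkeeping verification rather than a substantive argument.

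First I would recall that for $R$-modules $A$ and $B$ with $R$ commutative, $\Hom_R(A,B)$ is naturally an $R$-module via $(r\cdot \phi)(a) = r\phi(a) = \phi(ra)$, and this $R$-action is compatible with both pre- and post-composition by $R$-linear maps. Consequently, for any two complexes $U,V$ of $R$-modules, each component $\ahom(U,V)_n = \prod_p \Hom_R(U_p,V_{p+n})$ is an $R$-module, the differential $\partial^{\ahom(U,V)}$ is $R$-linear, and each homology group $\HH_{-n}(\ahom(U,V))$ is an $R$-module. Applying this to a Tate $\catw$-resolution $T\xra\alpha W\xra\gamma M$, where by hypothesis $T$ and $W$ are complexes of $R$-modules, yields that $\watate^n(M,N) = \HH_{-n}(\ahom(T,N))$ is an $R$-module; the analogous statement for $\avtate^n(M,N) = \HH_{-n}(\ahom(M,S))$ follows identically.

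For the comparison and functoriality maps, I would note that $\alpha\colon T\to W$, $\comp f\colon T\to T'$, and $g\colon N\to N'$ are morphisms of $\cata$-complexes, hence are $R$-linear in each degree. Therefore the chain maps $\ahom(\alpha,N)$, $\ahom(\comp f,N)$, and $\ahom(T,g)$ are morphisms of complexes of $R$-modules, and applying $\HH_{-n}$ shows that $\wacomp^n(M,N)$, $\watate^n(f,N)$, and $\watate^n(M,g)$ are $R$-module homomorphisms. The assertions for $\avtate^n$ and $\avcomp^n$ follow by the dual argument applied to the Tate $\catv$-coresolutions $N\xra\delta V\xra\beta S$ and $N'\xra{\delta'} V'\xra{\beta'} S'$.

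The only point requiring a brief word is well-definedness: the lift $\comp f$ from Lemma~\ref{comp01} is only unique up to homotopy, but the constructions in that lemma produce $\cata$-morphisms, which are $R$-linear by definition, and any two such lifts differ by an $R$-linear null-homotopy, inducing the same map on homology. Thus no genuine obstacle arises; the proof is a straightforward transfer of $R$-linearity through the $\Hom$ and $\HH$ constructions.
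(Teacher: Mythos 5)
Your proposal is correct and is the routine verification the paper has in mind; the statement is labeled a ``Fact'' precisely because, as you observe, the construction takes place entirely in $\catm(R)$, where $\ahom$ produces complexes of $R$-modules with $R$-linear differentials, and all the morphisms involved ($\alpha$, $\comp{f}$, $g$, and the null-homotopies witnessing uniqueness) are $R$-linear, so taking homology preserves the $R$-module structure throughout.
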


\begin{fact} \label{tate02}
Lemma~\ref{comp01} parts~\eqref{comp01item1} and~\eqref{comp01item2} show that 
\begin{align*}
\watate^n&\colon\cpltrescatw\times\cata\to \catab 
&\text{and}&&
\avtate^n&\colon\cata\times \cpltcorescatv\to \catab
\end{align*}
are well-defined bifunctors and 
that 
\begin{align*}
\wacomp^n&\colon\waext^n|_{\cpltrescatw\times\cata}\to\watate^n &
\avcomp^n&\colon\avext^n|_{\cata\times\cpltcorescatv}\to\avtate^n
\end{align*}
are natural transformations,  independent of resolutions and liftings.
\end{fact}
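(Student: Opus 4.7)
The plan is to extract everything from Lemma~\ref{comp01} together with the general principle that homotopic morphisms of complexes induce the same map on homology. I fix, once and for all, a Tate $\catw$-resolution $T\xra{\alpha}W\xra\gamma M$ for each object $M\in\cpltrescatw$; this specifies the groups $\watate^n(M,N)=\HH_{-n}(\ahom(T,N))$ on objects. The comparison groups $\wacomp^n(M,N)=\HH_{-n}(\ahom(\alpha,N))$ are then likewise specified. All dual statements for $\avtate^n$ and $\avcomp^n$ will follow from entirely parallel arguments using Lemma~\ref{comp01}\eqref{comp01item2}, so I handle only the covariant $\catw$-side.

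For functoriality on morphisms, given $f\colon M\to M'$ I apply Lemma~\ref{comp01}\eqref{comp01item1} to choose a lift $\ol f\colon W\to W'$ and then $\comp f\colon T\to T'$. I then set $\watate^n(f,N)=\HH_{-n}(\ahom(\comp f,N))$. Well-definedness is the first key check: any two choices $\comp f,\comp f'$ differ by a null-homotopic morphism of complexes, so $\ahom(\comp f,N)-\ahom(\comp f',N)$ is null-homotopic, hence induces the same map on $\HH_{-n}$. For the composition $g\circ f$, the morphism $\comp g\circ\comp f$ is a valid choice of lift of $g\circ f$, so its image under $\HH_{-n}(\ahom(-,N))$ coincides with $\watate^n(g\circ f,N)$; contravariance then gives $\watate^n(g\circ f,N)=\watate^n(f,N)\circ\watate^n(g,N)$. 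Since $\id_T$ is a lift of $\id_M$, the identity is preserved. Functoriality in $N$ via $\watate^n(M,g)=\HH_{-n}(\ahom(T,g))$ is immediate, because $\ahom(T,-)$ is a functor of complexes; and the two variances commute because one operates on the first argument of Hom and the other on the second.

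For independence from the choice of Tate resolution, suppose $T\xra\alpha W\xra\gamma M$ and $T'\xra{\alpha'}W'\xra{\gamma'}M$ are two choices. Applying Lemma~\ref{comp01}\eqref{comp01item1} to $\id_M$ (in both directions) and using the last clause of that lemma, the resulting lifts $\comp{\id_M}\colon T\to T'$ and in the reverse direction compose (in either order) to a lift of $\id_M$, hence are homotopy equivalences. Thus $\ahom(\comp{\id_M},N)$ is a quasiisomorphism and induces a canonical isomorphism between the two candidates for $\watate^n(M,N)$; uniqueness up to homotopy in Lemma~\ref{comp01} forces this comparison to be independent of all intermediate choices, so the functor $\watate^n$ is well-defined up to canonical isomorphism. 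The same uniqueness up to homotopy justifies the diagrams displayed in Definition~\ref{tate01}, yielding naturality of $\watate^n$ in both arguments and, in particular, naturality of the composition product.

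Finally, for the comparison map, the commutative square $\ol f\alpha=\alpha'\comp f$ provided by Lemma~\ref{comp01}\eqref{comp01item1} yields a commutative square
\[
\xymatrix{
\ahom(W',N) \ar[r]^{\ahom(\alpha',N)} \ar[d]_{\ahom(\ol f,N)} & \ahom(T',N) \ar[d]^{\ahom(\comp f,N)} \\
\ahom(W,N) \ar[r]^{\ahom(\alpha,N)} & \ahom(T,N)
}
\]
on Hom-complexes, and applying $\HH_{-n}$ gives naturality of $\wacomp^n$ in $M$; naturality in $N$ is immediate from functoriality of $\ahom(\alpha,-)$. The only subtle point is that, a priori, $\ol f\alpha$ and $\alpha'\comp f$ need only be homotopic, not equal; but a null-homotopy between them again induces the zero map on $\HH_{-n}$, so the square commutes on the nose in cohomology. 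The main obstacle in all of this is purely bookkeeping, namely carefully tracking which compositions are equal and which are only equal up to homotopy, and verifying at each step that homotopies lie in the kernel of $\HH_{-n}(\ahom(-,N))$; once the homotopy-invariance principle is in hand, every claim reduces to the existence and uniqueness statements of Lemma~\ref{comp01}.
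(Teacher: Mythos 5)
Your proposal is correct and takes the same route the paper implicitly takes: the paper labels this a ``Fact'' and offers only the reference to Lemma~\ref{comp01}, and your argument is precisely the standard unpacking of that lemma (existence and uniqueness-up-to-homotopy of lifts giving well-definedness and functoriality, the split into composable lifts giving independence of resolutions, and the last clause giving homotopy equivalences), combined with the principle that null-homotopic morphisms vanish on homology. You also correctly flag and resolve the only genuinely delicate point, namely that the left-hand square in Lemma~\ref{comp01}\eqref{comp01item1} commutes only up to homotopy, so the naturality square for $\wacomp^n$ commutes only after passing to $\HH_{-n}$.
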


\begin{notation} \label{notn2}
Let $R$ be a commutative ring, and let $C$ be a
semidualizing $R$-module.  
We  abbreviate as follows:
\begin{align*}
\smash{\comp{\ext}}_{\catpc}
&=\smash{\comp{\ext}}_{\catpc(R) \catm(R)}
&\smash{\comp{\ext}}_{\catic}
&=\smash{\comp{\ext}}_{\catm(R) \catic(R)}\\
\ext_{\catpc}
&=\ext_{\catpc(R) \catm(R)}
&\ext_{\catic}
&=\ext_{\catm(R) \catic(R)}\\
\ext_{\opg(\catpc)}
&=\ext_{\opg(\catpc(R)) \catm(R)}
&\ext_{\opg(\catic)}
&=\ext_{\catm(R) \opg(\catic(R))}. 
\end{align*}
\end{notation}

The next result show that objects with finite homological dimensions
have vanishing Tate cohomology, as in~\cite[(5.2)]{avramov:aratc}. 
See Theorems~\ref{tatewhat01} and~\ref{tatewhat02}
for converses.

\begin{prop} \label{vanish01}
Let $M$ and $N$ be objects in $\cata$, and assume $\catw\perp\catw$ and $\catv\perp\catv$.
\begin{enumerate}[\quad\rm(a)]
\item \label{vanish01item1}
If $\wpd(M)<\infty$, then  $\watate^n(M,-)=0$ and
$\watate^n(-,M)=0$ for all $n$.
\item \label{vanish01item3}
If $\vid(N)<\infty$,  then   $\avtate^n(-,N)=0$ 
and $\avtate^n(N,-)=0$ for all $n$.
\end{enumerate}
\end{prop}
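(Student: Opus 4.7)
The plan is to reduce each vanishing statement to a property of the Tate resolutions guaranteed by Facts~\ref{cplt03} and~\ref{gtriv01}. Since parts~\eqref{vanish01item1} and~\eqref{vanish01item3} are completely dual, I will only outline part~\eqref{vanish01item1}; the proof of~\eqref{vanish01item3} follows by swapping resolutions for coresolutions, $\catw$ for $\catv$, and reversing arrows throughout.

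For the vanishing $\watate^n(M,-)=0$, I would exploit the fact that, under the hypothesis $\catw\perp\catw$, finite $\catw$-projective dimension gives a particularly simple Tate resolution. Concretely, Fact~\ref{cplt03} asserts that when $M\in\finrescatw$, any bounded $\catw$-resolution $W\xra{\gamma}M$ is proper and that the diagram $0\to W\xra{\gamma}M$ qualifies as a Tate $\catw$-resolution. Applying the definition of $\watate^n$ with $T=0$ yields $\watate^n(M,N)=\HH_{-n}(\ahom(0,N))=0$ for every object $N$ and every integer $n$. By Fact~\ref{tate02}, this value is independent of the choice of Tate resolution, so the vanishing holds unconditionally.

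For the vanishing $\watate^n(-,M)=0$, I would fix an arbitrary object $L\in\cpltrescatw$ together with a Tate $\catw$-resolution $T\xra{\alpha}W\xra{\gamma}L$. By definition $T$ is totally $\catw$-acyclic, and Fact~\ref{gtriv01} tells us that any totally $\catw$-acyclic complex is $\ahom(-,\finrescatw)$-exact. The assumption $\wpd(M)<\infty$ places $M$ in $\finrescatw$, so $\ahom(T,M)$ is exact, and hence $\watate^n(L,M)=\HH_{-n}(\ahom(T,M))=0$ for all $n$. Again, well-definedness from Fact~\ref{tate02} removes any dependence on the chosen resolution.

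Part~\eqref{vanish01item3} is handled identically after dualizing: the hypothesis $\vid(N)<\infty$ combined with $\catv\perp\catv$ yields, via the coresolution half of Fact~\ref{cplt03}, a Tate $\catv$-coresolution $N\xra{\delta}V\to 0$, killing $\avtate^n(-,N)$; and for any $L\in\cpltcorescatv$ with Tate coresolution $L\xra{\delta'}V'\xra{\beta'}S'$, the dual of Fact~\ref{gtriv01} makes $\ahom(N,S')$ exact, so $\avtate^n(N,L)=0$. There is no substantive obstacle here; the main care required is merely in citing the correct half of the symmetric machinery of Section~\ref{sec7}.
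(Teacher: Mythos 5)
Your proposal is correct and follows essentially the same route as the paper: the vanishing of $\watate^n(M,-)$ comes from the trivial Tate $\catw$-resolution $0\to W\to M$ provided by Fact~\ref{cplt03} under the hypothesis $\catw\perp\catw$, and the vanishing of $\watate^n(-,M)$ comes from the last part of Fact~\ref{gtriv01}, which makes $\ahom(T,M)$ exact for any totally $\catw$-acyclic complex $T$ since $M\in\finrescatw$; part~\eqref{vanish01item3} is dual, exactly as in the paper. Your explicit appeal to Fact~\ref{tate02} for independence of the chosen resolution is a harmless added remark that the paper leaves implicit.
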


\begin{proof}
We prove part~\eqref{vanish01item1};
the proof of~\eqref{vanish01item3} is dual.
Assume that $\wpd(M)<\infty$.
The vanishing $\watate(M,-)=0$  follows from Remark~\ref{cplt03},
since we have a Tate $\catw$-resolution of $M$
of the form $0\to W\to M$.
The vanishing $\watate(-,M)=0$
follows from the last part of Fact~\ref{gtriv01} since,
for each complete $\catw$-resolution $T'$, the complex
$\ahom(T',M)$ is exact.
\end{proof}

Our next results provide long exact sequences for Tate cohomology.
They are proved like~\cite[(5.4),(5.6)]{avramov:aratc},
using Lemma~\ref{horseshoe02}.

\begin{lem} \label{les01}
Let $M$ be an object in $\cpltrescatw$,
and let $N$ be an object in $\cpltcorescatv$.
Consider an exact sequence in $\cata$ 
$$\mathbb{L}=\quad 0\to L'\xra{f'}L\xra{f}L''\to 0.$$
\begin{enumerate}[\quad\rm(a)]
\item \label{les01item1}
If the sequence $\mathbb{L}$ is $\ahom(\catw,-)$-exact,
then there is a long exact sequence
\begin{align*}
\cdots \to
& \watate^n(M,L') \xra{\watate^n(M,f')}
  \watate^n(M,L) \xra{\watate^n(M,f)}
    \\
& \watate^n(M,L'')\xra{\comp{\eth}^n_{\catw\cata}(M,\mathbb{L})}
 \watate^{n+1}(M,L') \xra{\watate^{n+1}(M,f')}
 \cdots 
\end{align*}
that is natural in $M$ and $\mathbb{L}$, and is compatible with the long exact sequence in relative
cohomology via the comparison maps $\wacomp^n$ from~\ref{tate01}.
\item \label{les01item2}
If the sequence $\mathbb{L}$  is $\ahom(-,\catv)$-exact,
then there is a long exact sequence
\begin{align*}
\cdots \to
& \avtate^n(L'',N) \xra{\avtate^n(f,N)}
 \avtate^n(L,N) \xra{\avtate^n(f',N)}
  \\
& \avtate^n(L',N)\xra{\comp{\eth}^n_{\cata\catv}(\mathbb{L},N)}
 \avtate^{n+1}(L'',N) \xra{\avtate^{n+1}(f,N)}
 \cdots 
\end{align*}
that is natural in $N$ and $\mathbb{L}$, and is compatible with the long exact sequence in relative
cohomology via the comparison maps $\avcomp^n$ from~\ref{tate01}. \qed
\end{enumerate}
\end{lem}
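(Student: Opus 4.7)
The plan for part~\eqref{les01item1} is to produce the long exact sequence from a short exact sequence of Hom\-complexes. Fix a Tate $\catw$-resolution $T \xra{\alpha} W \xra{\gamma} M$. Since each $T_n$ lies in $\catw$ and the sequence $\mathbb{L}$ is $\ahom(\catw,-)$-exact, applying $\ahom(T_n,-)$ to $\mathbb{L}$ yields an exact sequence for every $n$; consequently one obtains a short exact sequence of complexes of abelian groups
$$0 \to \ahom(T,L') \to \ahom(T,L) \to \ahom(T,L'') \to 0.$$
The associated long exact sequence in $\HH_{-n}(-)$ is precisely the claimed long exact sequence of Tate cohomology groups, with connecting homomorphism $\comp{\eth}^n_{\catw\cata}(M,\mathbb{L})$ defined by the standard snake-lemma recipe.

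For compatibility with the relative long exact sequence, the same argument applied to the proper $\catw$-resolution $W$ (noting $W_n\in\catw$) produces an analogous short exact sequence for $\ahom(W,\mathbb{L})$, whose long exact sequence is the relative one defined in Section~\ref{sec1}. The chain map $\alpha\colon T\to W$ induces a vertical morphism between these two short exact sequences of complexes by precomposition, and because the snake-lemma construction is natural, the resulting ladder between the two long exact sequences commutes. By inspection, its vertical maps are the comparison homomorphisms $\wacomp^n$ from Definition~\ref{tate01}, which gives the claimed compatibility. Naturality in $\mathbb{L}$ is immediate from the functoriality of $\ahom(T,-)$. For naturality in $M$, a morphism $f\colon M\to M'$ in $\cpltrescatw$ lifts by Lemma~\ref{comp01}\eqref{comp01item1} to a chain map $\comp{f}\colon T\to T'$ between the chosen Tate resolutions, unique up to homotopy; precomposition induces morphisms of short exact sequences of Hom\-complexes, and the induced maps on homology are well-defined since homotopic chain maps induce equal maps on homology. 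Hence the resulting ladder between long exact sequences is well-defined and natural.

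The proof of part~\eqref{les01item2} is entirely dual, using a Tate $\catv$-coresolution $N\xra{\delta} V\xra{\beta} S$ of $N$: the hypothesis that $\mathbb{L}$ is $\ahom(-,\catv)$-exact and the fact that each $S_n\in\catv$ yield a short exact sequence of complexes $0\to \ahom(\mathbb{L},S)\to\cdots\to 0$ (in the appropriate order), and the same snake-lemma argument produces the long exact sequence and the ladder against the relative one. No substantive obstacle arises; the entire proof is a routine snake-lemma argument combined with the naturality statements of Lemma~\ref{comp01}. The only care required is sign bookkeeping in the connecting map and in the identification $\watate^n(M,-)=\HH_{-n}(\ahom(T,-))$, but these follow the convention of Definition~\ref{notation07a}.
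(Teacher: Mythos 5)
Your argument is correct and is essentially the proof the paper intends: the paper omits it, referring to Avramov--Martsinkovsky (5.4), whose proof is exactly this snake-lemma argument applied to the short exact sequence of complexes $0\to\ahom(T,L')\to\ahom(T,L)\to\ahom(T,L'')\to 0$ (exact degreewise since each $T_n\in\catw$ and $\mathbb{L}$ is $\ahom(\catw,-)$-exact), with the compatibility ladder induced by $\ahom(\alpha,-)$ and naturality handled via Lemma~\ref{comp01}. Note that the horseshoe lemma~\ref{horseshoe02} cited by the paper is only needed for the companion result (Lemma~\ref{les02}, the sequence in the first variable), so its absence from your proof is not a gap.
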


\begin{lem} \label{les02}
Let $M$ and $N$ be objects in $\cata$, and assume that $\catw\perp\catw$ and $\catv\perp\catv$.
Consider an exact sequence in $\cata$ 
$$\mathbb{L}=\quad 0\to L'\xra{f'}L\xra{f}L''\to 0.$$
\begin{enumerate}[\quad\rm(a)]
\item \label{les02item2}
Assume that $\catw$ is closed under kernels of epimorphisms, the objects
$L,L',L''$ are in $\cpltrescatw$,
and the sequence $\mathbb{L}$ is $\ahom(\catw,-)$-exact.
Then there is a long exact sequence
\begin{align*}
\cdots \to
& \watate^n(L'',N) \xra{\watate^n(f,N)}
 \watate^n(L,N) \xra{\watate^n(f',N)}
 \\
&  \watate^n(L',N)\xra{\comp{\eth}^n_{\catw\cata}(\mathbb{L},N)}
 \watate^{n+1}(L'',N) \xra{\watate^{n+1}(f,N)}
 \cdots 
\end{align*}
that is natural in $N$ and $\mathbb{L}$, and is compatible with the long exact sequence in relative
cohomology via the comparison maps $\wacomp^n$.
\item \label{les02item1}
Assume that $\catv$ is closed under cokernels of monomorphisms, the objects
$L,L',L''$ are in $\cpltcorescatv $,
and the sequence $\mathbb{L}$ is $\ahom(-,\catv)$-exact.
Then there is a long exact sequence
\begin{align*}
\cdots \to
& \avtate^n(M,L') \xra{\avtate^n(M,f')}
 \avtate^n(M,L) \xra{\avtate^n(M,f)}
  \\
& \avtate^n(M,L'')\xra{\comp{\eth}^n_{\cata\catv}(M,\mathbb{L})}
 \avtate^{n+1}(M,L') \xra{\avtate^{n+1}(M,f')}
 \cdots 
\end{align*}
that is natural in $M$ and $\mathbb{L}$, and is compatible with the long exact sequence in relative
cohomology via the comparison maps $\avcomp^n$. \qed
\end{enumerate}
\end{lem}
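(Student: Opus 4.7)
The plan is to mimic the proof of~\cite[(5.6)]{avramov:aratc} by reducing to a degreewise split exact sequence of Tate $\catw$-resolutions built from the relative horseshoe lemma, then applying $\ahom(-,N)$ (resp.\ $\ahom(M,-)$). We treat part~\eqref{les02item2}; part~\eqref{les02item1} is dual.

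First I would produce Tate $\catw$-resolutions of $L'$ and $L''$ of the shape required by Lemma~\ref{horseshoe02}\eqref{horseshoe02item1}. By Theorem~\ref{cplt05}\eqref{cplt05item1}, we have $L', L, L''\in\finrescat{G(W)}$. Since $\catw\perp\catw$ and $\catw$ is closed under kernels of epimorphisms, Fact~\ref{sgcsummary} shows that $\catx:=\opg(\catw)$ is exact, closed under kernels of epimorphisms, and admits $\catw$ as both an injective cogenerator and projective generator. Thus Lemma~\ref{cplt01}\eqref{cplt01item1} applied with this $\catx$ yields Tate $\catw$-resolutions $T'\xra{\alpha'}W'\xra{\gamma'}L'$ and $T''\xra{\alpha''}W''\xra{\gamma''}L''$ such that each $\alpha'_n$, $\alpha''_n$ is a split surjection and is an isomorphism for all $n$ at least some common integer $d$.

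Next, since $\mathbb{L}$ is $\ahom(\catw,-)$-exact, the horseshoe lemma~\ref{horseshoe02}\eqref{horseshoe02item1} supplies a Tate $\catw$-resolution $T\xra{\alpha}W\xra{\gamma}L$ sitting in a commutative diagram whose top two rows are degreewise split exact sequences of $\cata$-complexes
\begin{equation*}
0\to T'\to T\to T''\to 0\qquad\qquad 0\to W'\to W\to W''\to 0.
\end{equation*}
Applying $\ahom(-,N)$ to the first sequence yields a short exact sequence of complexes of abelian groups, and the induced long exact sequence in homology, reindexed via $\watate^n=\HH_{-n}\ahom(-,N)$, is precisely the desired Tate cohomology long exact sequence, with connecting map $\comp{\eth}^n_{\catw\cata}(\mathbb{L},N)$. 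Naturality in $N$ is immediate from functoriality of $\ahom(T,-)$, $\ahom(T',-)$, $\ahom(T'',-)$. Naturality in $\mathbb{L}$ reduces, via Lemma~\ref{comp01}\eqref{comp01item1}, to the uniqueness up to homotopy of lifts of morphisms of short exact sequences to morphisms of horseshoe resolutions.

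For compatibility with the relative long exact sequence, the same construction applied to the second degreewise split short exact sequence above produces the relative cohomology long exact sequence in $\waext^n(-,N)$ (this is essentially~\cite[(4.4)]{sather:gcac} in this setup). The vertical maps $\alpha', \alpha, \alpha''$ give a morphism of short exact sequences of complexes $\ahom(W^\bullet,N)\to\ahom(T^\bullet,N)$, which on homology realizes the comparison maps $\wacomp^n$ of Definition~\ref{tate01}; hence the two long exact sequences fit together as claimed. The main technical hurdle is the very first step: verifying that the hypotheses of Lemma~\ref{cplt01} are satisfied, which is exactly what the identification $\catx=\opg(\catw)$ combined with Fact~\ref{sgcsummary} provides. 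Once the split-surjection/isomorphism-above-$d$ form of the Tate resolutions is in hand, the horseshoe lemma and standard long exact sequence machinery do the rest.
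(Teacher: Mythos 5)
Your proposal is correct and follows essentially the same route the paper indicates: it proves the lemma ``like \cite[(5.4),(5.6)]{avramov:aratc}'' by using Lemma~\ref{cplt01} (via Fact~\ref{sgcsummary} with $\catx=\opg(\catw)$) to get Tate $\catw$-resolutions of $L'$ and $L''$ with split-surjective $\alpha_n$, then Lemma~\ref{horseshoe02} to get the degreewise split exact sequences of complexes, and finally $\ahom(-,N)$ and the homology long exact sequence, with the compatibility realized by $\ahom(\alpha,N)$ as in Definition~\ref{tate01}. The only quibble is the citation of \cite[(4.4)]{sather:gcac} for the relative long exact sequence, which in this paper's setup comes from the relative horseshoe lemma \cite[(1.9.a)]{sather:gcac} applied to the middle row of the diagram; this does not affect the argument.
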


The next two lemmas  allow us to dimension-shift with Tate cohomology.
They have similar proofs, as do the other natural invariants.

\begin{lem} \label{dimshft03}
Assume that $\catw\perp\catw$, and consider an exact sequence in $\cata$
$$\mathbb{L}=\quad 0\to L'\to L\to L''\to 0$$
that is $\ahom(\catw,-)$ exact  and such that $L\in\finrescatw$.
\begin{enumerate}[\quad\rm(a)]
\item \label{dimshft03item1}
The natural transformation
$\comp{\eth}^n(-,\mathbb{L})\colon \watate^n(-,L'')\xra{\cong}\watate^{n+1}(-,L')$
is an isomorphism of functors 
for each $n\in\mathbb{Z}$.
\item \label{dimshft03item2}
If $\catw$ is closed under
kernels of epimorphisms and $L',L''\in\cpltrescatw$, then the natural transformation
$\comp{\eth}^n(\mathbb{L},-)\colon \watate^n(L',-)\xra{\cong}\watate^{n+1}(L'',-)$
is an isomorphism of functors  for each $n\in\mathbb{Z}$. 
\end{enumerate}
\end{lem}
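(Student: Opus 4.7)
The plan is to combine the long exact sequences of Lemmas~\ref{les01} and~\ref{les02} with the vanishing statement of Proposition~\ref{vanish01}\eqref{vanish01item1}. In each case, once the relevant long exact sequence is available, the hypothesis $L\in\finrescatw$ forces the middle terms to vanish, and the connecting homomorphism becomes an isomorphism.

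For part~\eqref{dimshft03item1}, I would fix $M\in\cpltrescatw$ and invoke Lemma~\ref{les01}\eqref{les01item1} (whose hypotheses are exactly that $M\in\cpltrescatw$ and $\mathbb L$ is $\ahom(\catw,-)$-exact) to obtain the long exact sequence
\begin{align*}
\cdots\to\watate^n(M,L')\to\watate^n(M,L)\to\watate^n(M,L'')
\xra{\comp{\eth}^n(M,\mathbb L)}\watate^{n+1}(M,L')\to\cdots
\end{align*}
Since $L\in\finrescatw$, Proposition~\ref{vanish01}\eqref{vanish01item1} gives $\watate^n(M,L)=0$ for every $n\in\zz$, and the long exact sequence immediately collapses to show that each $\comp{\eth}^n(M,\mathbb L)$ is an isomorphism. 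Naturality in $M$ is inherited from the naturality statement in Lemma~\ref{les01}\eqref{les01item1}.

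For part~\eqref{dimshft03item2}, I would observe first that $L\in\finrescatw\subseteq\cpltrescatw$ by Fact~\ref{cplt03}, so together with the hypothesis $L',L''\in\cpltrescatw$ all three objects of $\mathbb L$ lie in $\cpltrescatw$. With the additional assumption that $\catw$ is closed under kernels of epimorphisms, Lemma~\ref{les02}\eqref{les02item2} applies and produces, for each $N\in\cata$, a long exact sequence
\begin{align*}
\cdots\to\watate^n(L'',N)\to\watate^n(L,N)\to\watate^n(L',N)
\xra{\comp{\eth}^n(\mathbb L,N)}\watate^{n+1}(L'',N)\to\cdots
\end{align*}
The condition $L\in\finrescatw$ again triggers Proposition~\ref{vanish01}\eqref{vanish01item1}, giving $\watate^n(L,N)=0$ for all $n$, so $\comp{\eth}^n(\mathbb L,N)$ is an isomorphism and naturality in $N$ is automatic.

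There is essentially no hard step: the whole argument is a two-line dimension shift once the correct long exact sequence is identified. The only mild subtlety is bookkeeping about which hypothesis licenses which long exact sequence, namely checking that in part~\eqref{dimshft03ite­m2} the object $L$ qualifies as an element of $\cpltrescatw$ (so that the sequence $\mathbb L$ really sits inside the category where Lemma~\ref{les02}\eqref{les02item2} operates), which is handled by the containment $\finrescatw\subseteq\cpltrescatw$ recorded in Fact~\ref{cplt03}.
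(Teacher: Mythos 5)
Your proposal is correct and matches the paper's own argument essentially verbatim: both parts follow by combining the long exact sequence of Lemma~\ref{les01}\eqref{les01item1} (resp.\ Lemma~\ref{les02}\eqref{les02item2}) with the vanishing $\watate^n(M,L)=0$ (resp.\ $\watate^n(L,-)=0$) from Proposition~\ref{vanish01}\eqref{vanish01item1}. You are actually a bit more careful than the printed text, which contains a typographical slip (it cites part~\eqref{les02item1} of Lemma~\ref{les02}, the $\avtate$ version, where part~\eqref{les02item2} is the one needed) and compresses your explicit observation that $L\in\finrescatw\subseteq\cpltrescatw$ into the sentence ``Our hypotheses guarantee that the functors and transformation under consideration are defined.''
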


\begin{proof}
\eqref{dimshft03item1}
Use the long exact sequence from Lemma~\ref{les01}\eqref{les01item1}
with the vanishing $\watate^n(-,L)=0$ from Propositon~\ref{vanish01}\eqref{vanish01item1}.

\eqref{dimshft03item2}
Our hypotheses guarantee that 
the functors and tranfsormation under consideration are defined.  
Now use the long exact sequence from Lemma~\ref{les02}\eqref{les02item1}
with the vanishing $\watate^n(L,-)=0$ from Propositon~\ref{vanish01}\eqref{vanish01item1}.
\end{proof}

\begin{lem} \label{dimshft04}
Assume that $\catv\perp\catv$, and consider an exact sequence in $\cata$
$$\mathbb{L}=\quad 0\to L'\to L\to L''\to 0$$
that is $\ahom(-,\catv)$ exact  and such that $L\in\fincorescatv$.
\begin{enumerate}[\quad\rm(a)]
\item \label{dimshft04item1}
The natural transformation
$\comp{\eth}^n(\mathbb{L},-)\colon \avtate^n(L',-)\xra{\cong}\avtate^{n+1}(L'',-)$
is an isomorphism of functors 
for each $n\in\mathbb{Z}$.
\item \label{dimshft04item2}
If $\catv$ is closed under
cokernels of monomorphisms and $L',L''\in \cpltcorescatv$, then the natural transformation
$\comp{\eth}^n(-,\mathbb{L})\colon \avtate^n(-,L'')\xra{\cong}\avtate^{n+1}(-,L')$
is an isomorphism of functors  for each $n\in\mathbb{Z}$. \qed
\end{enumerate}
\end{lem}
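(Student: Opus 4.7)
The plan is to prove both parts by the same strategy as Lemma~\ref{dimshft03}, invoking the appropriate long exact sequence of Tate cohomology associated to $\mathbb{L}$ and then killing the middle term using the vanishing of Tate cohomology on objects of finite $\catv$-injective dimension. The only adjustments from the proof of Lemma~\ref{dimshft03} are to cite the $\avtate$-versions of the long exact sequences, since the relevant exactness hypothesis here is $\ahom(-,\catv)$-exactness, and to cite the dual vanishing statement from Proposition~\ref{vanish01}\eqref{vanish01item3}.

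For part~\eqref{dimshft04item1}, I would apply Lemma~\ref{les01}\eqref{les01item2} to the $\ahom(-,\catv)$-exact sequence $\mathbb{L}$, producing for each $N\in\cata$ a long exact sequence connecting $\avtate^n(L'',N)$, $\avtate^n(L,N)$, and $\avtate^n(L',N)$ with connecting homomorphism $\comp{\eth}^n(\mathbb{L},N)\colon \avtate^n(L',N)\to\avtate^{n+1}(L'',N)$. The hypothesis $L\in\fincorescatv$ together with Proposition~\ref{vanish01}\eqref{vanish01item3} gives $\avtate^n(L,-)=0$ for all $n$, forcing the connecting maps to be isomorphisms. The naturality in $N$ and $\mathbb{L}$ supplied by Lemma~\ref{les01}\eqref{les01item2} then upgrades this pointwise statement to an isomorphism of functors.

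For part~\eqref{dimshft04item2}, I would first check that all three functors $\avtate^n(-,L')$, $\avtate^n(-,L)$, $\avtate^n(-,L'')$ are defined. The standing hypothesis $\catv\perp\catv$ together with Fact~\ref{cplt03} gives $\fincorescatv\subseteq\cpltcorescatv$, so $L\in\cpltcorescatv$; combined with the given $L',L''\in\cpltcorescatv$ and the assumption that $\catv$ is closed under cokernels of monomorphisms, the hypotheses of Lemma~\ref{les02}\eqref{les02item1} are satisfied. That lemma yields, for each $M\in\cata$, a long exact sequence linking $\avtate^n(M,L')$, $\avtate^n(M,L)$, and $\avtate^n(M,L'')$ with connecting map $\comp{\eth}^n(M,\mathbb{L})\colon \avtate^n(M,L'')\to\avtate^{n+1}(M,L')$. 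As before, $\avtate^n(-,L)=0$ by Proposition~\ref{vanish01}\eqref{vanish01item3}, so the connecting maps are isomorphisms. I do not anticipate any serious obstacle; the only mild subtlety is remembering to use the $\avtate$-versions of the supporting lemmas and to verify that $L\in\cpltcorescatv$ in part~\eqref{dimshft04item2}, both of which are automatic under the standing hypotheses.
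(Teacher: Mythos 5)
Your proof is correct and is precisely the dual of the paper's proof of Lemma~\ref{dimshft03}, which is exactly what the paper's \qed marker on the statement intends; you correctly switch to Lemma~\ref{les01}\eqref{les01item2} and Lemma~\ref{les02}\eqref{les02item1} for the $\avtate$-long exact sequences, invoke Proposition~\ref{vanish01}\eqref{vanish01item3} with $L\in\fincorescatv$, and check that $L\in\cpltcorescatv$ in part (b) via Fact~\ref{cplt03}. The only cosmetic imprecision is that in part (a) the second argument should range over $\cpltcorescatv$ rather than all of $\cata$, since that is the domain on which the functors $\avtate^n(L',-)$ are defined.
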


Next, we connect relative and Tate cohomology via a long exact sequence.

\begin{thm} \label{mixed01}
Assume that $\catx$ is exact and closed under kernels of epimorphisms.
Assume that $\catw$ is closed under direct summands and is both an injective cogenerator 
and a projective generator for $\catx$.
Fix objects $M\in\finrescatx$ and $N\in\cata$, and set $d=\xpd(M)$.
There is a long exact sequence
\begin{align*}
0 \to
& \xaext^1(M,N) \xra{\xwacomp^1(M,N)}
 \waext^1(M,N) \xra{\wacomp^1(M,N)}
 \watate^1(M,N) \to 
  \\
\to
& \xaext^2(M,N) \xra{\xwacomp^n(M,N)}
 \waext^2(M,N) \xra{\wacomp^n(M,N)}
 \watate^2(M,N) \to 
  \\
\cdots \to
& \xaext^d(M,N) \xra{\xwacomp^d(M,N)}
 \waext^d(M,N) \xra{\wacomp^d(M,N)}
 \watate^d(M,N) \to 
0
\end{align*}
that is natural in $M$ and $N$,
and the next maps are isomorphisms  for each $n>d$
$$ \wacomp^n(M,N)\colon \waext^n(M,N) \xra{\cong}
 \watate^n(M,N). $$
\end{thm}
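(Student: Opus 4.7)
The plan is to obtain the long exact sequence by applying $\ahom(-, N)$ to a certain degreewise split short exact sequence of $\cata$-complexes produced by Lemma~\ref{l01}, and then taking homology with appropriate identifications.

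First, I invoke Lemma~\ref{cplt01}\eqref{cplt01item1} to obtain a Tate $\catw$-resolution $T \xra{\alpha} W \xra{\gamma} M$ such that each $\alpha_n$ is a split surjection, $\alpha_n$ is an isomorphism for $n \geq d$, and each $\Ker(\partial^T_i) \in \catx$; in particular $\coker(\partial^T_1) \in \catx$, so Lemma~\ref{l01}\eqref{l01a} applies.  It produces a degreewise split exact sequence
$$0 \to \shift^{-1}X \to \wti T \to W \to 0$$
where $X$ is a bounded strict $\catw\catx$-resolution of $M$ (hence a proper $\catx$-resolution of length at most $d$) and $\wti T = (T_{\geq 0})^+$ is an exact complex agreeing with $T$ in degrees $\geq 0$, with $\wti T_{-1} = \coker(\partial^T_1)$. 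Applying $\ahom(-, N)$ yields a short exact sequence of chain complexes
$$0 \to \ahom(W, N) \to \ahom(\wti T, N) \to \ahom(\shift^{-1}X, N) \to 0.$$

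The next step is to identify, at each position $-n$, the three homology groups of this sequence. By definition, $\HH_{-n}(\ahom(W, N)) = \waext^n(M, N)$. The identification $\ahom(\shift^{-1}X, N) = \shift\ahom(X, N)$ combined with the properness of $X$ gives $\HH_{-n}(\ahom(\shift^{-1}X, N)) = \xaext^{n+1}(M, N)$. For $n \geq 1$, computing $\HH_{-n}(\ahom(\wti T, N))$ involves only the terms $\ahom(\wti T_m, N)$ with $m \geq 0$, where $\wti T_m = T_m$; hence this homology equals $\watate^n(M, N)$. For $n \leq 0$, one checks directly that $\HH_{-n}(\ahom(\wti T, N)) = 0$, using the surjectivity of $T_0 \twoheadrightarrow \wti T_{-1}$ and the exactness of the augmented $\catw$-resolution $\cdots \to T_1 \to T_0 \to \wti T_{-1} \to 0$.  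The associated long exact sequence in homology, together with the vanishing $\xaext^n(M, N) = 0$ for $n > d = \xpd(M)$, then produces the claimed finite long exact sequence (the left-hand $0$ coming from the vanishing of $\HH_{-n}(\ahom(\wti T, N))$ for $n \leq 0$, and the right-hand $0$ from $\xaext^{d+1}(M, N) = 0$) together with the isomorphisms $\waext^n(M, N) \xra{\cong} \watate^n(M, N)$ for $n > d$.

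The identification of the map $\waext^n(M, N) \to \watate^n(M, N)$ with $\wacomp^n(M, N)$ is straightforward: the surjection $\wti T \twoheadrightarrow W$ factors as $\alpha \circ \iota$ for the canonical morphism $\iota\colon \wti T \to T$ which is the identity in degrees $\geq 0$, so $\iota$ induces the identity on the relevant homologies. Naturality in $N$ is immediate from the functoriality of $\ahom(-, N)$, while naturality in $M$ comes from Lemma~\ref{l01}\eqref{l01b}, which supplies a morphism between the short exact sequences attached to any $f \colon M \to M'$. The main obstacle will be verifying that the connecting homomorphism in the long exact sequence coincides with $\xwacomp^{n+1}(M, N)$: a snake-lemma computation shows this connecting map is induced by precomposition with a chain map $\psi\colon W \to X$ determined by the splitting of the short exact sequence, and one must confirm that $\psi$ lifts $\id_M$ (so is homotopic to $\ol{\id_M}$), which will follow from the compatibility of the construction with the augmentations $W \to M$ and $X \to M$.
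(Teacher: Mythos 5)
Your argument is correct and is essentially the paper's own proof: both take the Tate resolution with split surjections from Lemma~\ref{cplt01}\eqref{cplt01item1}, feed it into Lemma~\ref{l01} to get the degreewise split exact sequence $0\to\shift^{-1}X\to\wti{T}\to W\to 0$, apply $\ahom(-,N)$ and pass to the induced long exact sequence, using the vanishing $\xaext^{>d}(M,N)=0$, with naturality in $N$ from functoriality of $\ahom(-,N)$ and naturality in $M$ from Lemma~\ref{l01}\eqref{l01b}. Your additional checks (the vanishing of $\HH_{-n}(\ahom(\wti{T},N))$ for $n\leq 0$, and the identification of the maps in the sequence with $\wacomp^n$ and, via the splitting data, with $\xwacomp^n$) merely make explicit details the paper leaves implicit.
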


\begin{proof}
By Lemma~\ref{cplt01}\eqref{cplt01item1} there is a
Tate $\catw$-resolution
$T\to W\to M$ such that $\alpha_n$ is a split surjection for each $n$.
Lemma~\ref{l01}\eqref{l01a} yields a degreewise split exact sequence of complexes
\begin{equation} \label{exact23}
0\to \shift^{-1}X\to T'\to W\to 0
\end{equation}
wherein
$X$ is a bounded strict $\catw\catx$-resolution of $M$ and
$T'_{\geq 0}\cong T_{\geq 0}$.
In particular, there are isomorphisms for each $n\geq 1$
\begin{gather*}
\xaext^n(M,N)\cong \HH_{-n}(\ahom(X,N)) \qquad
\waext^n(M,N)\cong \HH_{-n} (\ahom(W,N))\\
\watate^n(M,N)\cong \HH_{-n} (\ahom(T',N)).
\end{gather*}
Recall that $\xaext^n(M,N)=0$ for $n>d$.
Apply the functor $\ahom(-,N)$ to the sequence~\eqref{exact23} 
and take the induced long exact sequence
to obtain the desired long exact sequence and the isomorphisms.

To show that the long exact sequence is natural in $N$, 
let $g\colon N\to N'$ be a morphism in $\cata$. Apply $\ahom(-,g)$ to the sequence~\eqref{exact23}
to obtain the next commutative diagram
$$\xymatrix{
0\ar[r] 
& \ahom(W,N)\ar[r]\ar[d]
& \ahom(\wti{T},N)\ar[r] \ar[d]
& \ahom(\shift^{-1}X,N)\ar[r] \ar[d]
& 0
\\
0\ar[r] 
& \ahom(W,N')\ar[r]
& \ahom(\wti{T},N')\ar[r]  
& \ahom(\shift^{-1}X,N')\ar[r] 
& 0
}$$
which induces a commutative diagram of long exact sequences, as desired.

To show that the long exact sequence is natural in $M$, 
let $f\colon M\to M'$ be a morphism in $\cata$. 
Apply $\ahom(-,N)$ to the diagram from Lemma~\ref{l01}\eqref{l01b}
to obtain the next commutative diagram
$$\xymatrix{
0\ar[r] 
& \ahom(W',N)\ar[r]\ar[d]
& \ahom(\wti{T'},N)\ar[r] \ar[d]
& \ahom(\shift^{-1}X',N)\ar[r] \ar[d]
& 0
\\
0\ar[r] 
& \ahom(W,N)\ar[r]
& \ahom(\wti{T},N)\ar[r]  
& \ahom(\shift^{-1}X,N)\ar[r] 
& 0
}$$
which induces the desired commutative diagram of long exact sequences.
\end{proof}

\begin{para} \label{prfA}
\emph{Proof of Theorem~\ref{thmA}.}
Fact~\ref{disc01} shows that hypotheses of Theorem~\ref{mixed01}
are satisfied by $\catw=\catpc(R)$ and $\catx=\opg(\catpc(R))$.
\qed
\end{para}

The proofs of the next results are dual to those of Theorem~\ref{mixed01}
and Theorem~\ref{thmA}.

\begin{thm} \label{mixed02}
Assume that $\caty$ is exact and closed under cokernels of monomorphisms.
Assume that $\catv$  is closed under direct summands and is  an injective cogenerator 
and a projective generator for $\caty$.
Fix objects $M\in\cata$ and $N\in\fincorescaty$, and set $d=\yid(N)$.
There is a long exact sequence
\begin{align*}
0 \to
& \ayext^1(M,N) \xra{\ayvcomp^1(M,N)}
 \avext^1(M,N) \xra{\avcomp^1(M,N)}
 \avtate^1(M,N) \to 
  \\
\to
& \ayext^2(M,N) \xra{\ayvcomp^n(M,N)}
 \avext^2(M,N) \xra{\avcomp^n(M,N)}
 \avtate^2(M,N) \to 
  \\
\cdots \to
& \ayext^d(M,N) \xra{\ayvcomp^d(M,N)}
 \avext^d(M,N) \xra{\avcomp^d(M,N)}
 \avtate^d(M,N) \to 
0
\end{align*}
that is natural in $M$ and $N$,
and the next maps are isomorphisms for each $n>d$
\begin{xxalignat}{3}
  &{\hphantom{\square}}& \avcomp^n(M,N)
  &\colon \avext^n(M,N) \xra{\cong}
 \avtate^n(M,N).  &&\qed
\end{xxalignat}
\end{thm}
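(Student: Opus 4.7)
The plan is to dualize the proof of Theorem~\ref{mixed01}, using the coresolution versions of the auxiliary lemmas. First, I will invoke Lemma~\ref{cplt01}\eqref{cplt01item2} to obtain a Tate $\catv$-coresolution $N\xra{\delta}V\xra{\beta}S$ of $N$ with each $\beta_n$ a split injection (and an isomorphism for $n\leq -d$). Since $\catv$ is closed under direct summands, I then apply Lemma~\ref{l01'}\eqref{l01'a} to produce a degreewise split exact sequence of $\cata$-complexes
$$0\to V\to \wti{S}\to \shift Y\to 0,$$
where $Y$ is a bounded strict $\caty\catv$-coresolution of $N$ of length $d$, and $\wti{S}_{\leq 0}\cong S_{\leq 0}$.

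Next, applying $\ahom(M,-)$ preserves degreewise split exactness, so we obtain a short exact sequence of complexes of abelian groups whose associated long exact sequence in cohomology will yield the required one, once the terms are identified. The relevant identifications are $\avext^n(M,N)\cong\HH_{-n}(\ahom(M,V))$, $\avtate^n(M,N)\cong\HH_{-n}(\ahom(M,S))\cong\HH_{-n}(\ahom(M,\wti{S}))$ for $n\geq 1$ (the second isomorphism holds because $\wti{S}_{\leq 0}\cong S_{\leq 0}$), and $\HH_{-n}(\ahom(M,\shift Y))=\HH_{-n-1}(\ahom(M,Y))\cong\ayext^{n+1}(M,N)$. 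The vanishing $\ayext^n(M,N)=0$ for $n>d$ (since $Y$ has length $d$) then produces both the termination of the long exact sequence at $\avtate^d(M,N)$ and the isomorphisms $\avcomp^n(M,N)\colon\avext^n(M,N)\xra{\cong}\avtate^n(M,N)$ for $n>d$.

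For naturality in $M$, applying $\ahom(f,-)$ for a morphism $f\colon M\to M'$ directly to the degreewise split sequence yields a commutative ladder of short exact sequences of Hom-complexes, inducing a compatible morphism of long exact sequences. For naturality in $N$, I will invoke Lemma~\ref{l01'}\eqref{l01'b} to obtain, for each morphism $g\colon N\to N'$, a compatible morphism between the degreewise split sequences built in part~(a) for $N$ and $N'$; applying $\ahom(M,-)$ to this ladder then yields the desired commutative diagram of long exact sequences.

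The step requiring the most care is verifying that the middle maps in the long exact sequence coincide with the comparison homomorphisms $\avcomp^n(M,N)$ of Definition~\ref{tate01}, and that the leftmost maps coincide with the $\ayvcomp^n(M,N)$ of Definition~\ref{rel02a}, rather than merely being some natural homomorphism between the same groups. This reduces to tracing the explicit liftings specified in those definitions against the morphisms induced by the degreewise split sequence of Lemma~\ref{l01'}\eqref{l01'a}; the verification is parallel to the one implicit in the proof of Theorem~\ref{mixed01} and should produce no new obstructions.
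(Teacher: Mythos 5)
Your proposal is correct and follows essentially the same route as the paper, which proves Theorem~\ref{mixed02} precisely by dualizing the proof of Theorem~\ref{mixed01}: a Tate $\catv$-coresolution with split injections from Lemma~\ref{cplt01}\eqref{cplt01item2}, the degreewise split sequence $0\to V\to \wti{S}\to\shift Y\to 0$ from Lemma~\ref{l01'}, application of $\ahom(M,-)$, identification of the terms, and naturality via Lemma~\ref{l01'}\eqref{l01'b}. The term identifications and the vanishing $\ayext^{n}(M,N)=0$ for $n>d$ that you use are exactly the dual of the steps in the paper's argument, so no further comment is needed.
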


\begin{cor} \label{thmA'}
Let $R$ be a commutative ring, and let $C$ be a semidualizing $R$-module.
Let  $M$ and $N$ be $R$-modules, and assume that
$d=\opg(\catic)\text{-}\id_R(N)<\infty$.
There is a long exact sequence that is natural in $M$ and $N$
\begin{align*}
0 \to
& \ext_{\opg(\catic)}^1(M,N) \to
 \ext_{\catic}^1(M,N) \to
\smash{\comp{\ext}}_{\catic}^1(M,N) \to 
  \\
\to
& \ext_{\opg(\catic)}^2(M,N) \to
 \ext_{\catic}^2(M,N) \to
\smash{\comp{\ext}}_{\catic}^2(M,N) \to 
  \\
\cdots \to
& \ext_{\opg(\catic)}^d(M,N) \to
 \ext_{\catic}^d(M,N) \to
 \smash{\comp{\ext}}_{\catic}^d(M,N) \to 
0
\end{align*}
and there are isomorphisms  
$\ext_{\catic}^n(M,N) \xra{\cong}
\smash{\comp{\ext}}_{\catic}^n(M,N)$
for each $n>d$. \qed
\end{cor}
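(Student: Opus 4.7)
The plan is to deduce Corollary~\ref{thmA'} directly from Theorem~\ref{mixed02}, exactly dualizing the one-line derivation of Theorem~\ref{thmA} from Theorem~\ref{mixed01} given in~\eqref{prfA}. So I would set $\cata = \catm(R)$, $\caty = \opg(\catic(R))$, and $\catv = \catic(R)$, then unpack the notation of~\ref{notn2} to see that the long exact sequence produced by Theorem~\ref{mixed02} with this choice is literally the long exact sequence claimed.

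First I would verify the hypotheses of Theorem~\ref{mixed02}. From Fact~\ref{disc01}, the category $\catic(R)$ is exact (so in particular closed under direct summands, by Definition~\ref{notation01}) and closed under cokernels of monomorphisms, and one has $\catic(R)\perp\catic(R)$. The same fact records that $\catic(R)$ is both an injective cogenerator and a projective generator for $\opg(\catic(R))$, and that $\opg(\catic(R))$ is exact and closed under cokernels of monomorphisms. With these properties in hand, $\caty = \opg(\catic(R))$ and $\catv = \catic(R)$ meet every hypothesis of Theorem~\ref{mixed02}.

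Next I would translate the conclusion. The assumption $d = \opg(\catic)\text{-}\id_R(N) < \infty$ says precisely that $N \in \finrescat{\opg(\catic(R))}$ with $\yid(N) = d$, so Theorem~\ref{mixed02} produces a natural long exact sequence involving $\ayext^n(M,N)$, $\avext^n(M,N)$, and $\avtate^n(M,N)$ for $1 \le n \le d$, together with isomorphisms $\avcomp^n(M,N)\colon \avext^n(M,N) \xra{\cong} \avtate^n(M,N)$ for $n > d$. By Notation~\ref{notn2} this is exactly the sequence involving $\ext_{\opg(\catic)}^n(M,N)$, $\ext_{\catic}^n(M,N)$, and $\smash{\comp{\ext}}_{\catic}^n(M,N)$ in the statement.

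There is no real obstacle here; the content is entirely in Theorem~\ref{mixed02}, and the work consists only in checking that the semidualizing setup satisfies its abstract hypotheses. The dualization of~\eqref{prfA} is therefore immediate, and no separate horseshoe-lemma or strict-resolution argument needs to be reproduced.
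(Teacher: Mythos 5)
Your proof is correct and matches the paper's intent exactly: the paper marks Corollary~\ref{thmA'} with \qed and states that the proof is dual to that of Theorem~\ref{thmA}, which is precisely the one-line application of Theorem~\ref{mixed02} with $\caty=\opg(\catic(R))$, $\catv=\catic(R)$, using Fact~\ref{disc01} to verify the hypotheses. Nothing to add.
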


\section{Vanishing of Tate Cohomology}\label{sec10}

This section focuses on the interplay between finiteness of homological
dimensions and vanishing of Tate cohomology. It contains the proof
of Theorem~\ref{thmB}.
We begin with
a result that compares to~\cite[(5.9)]{avramov:aratc}, though 
the proof is
different.

\begin{lem} \label{tateperp01}
Assume that $\catw$ is closed under direct summands, and let
$M\in\opg(\catw)$. If $\watate^0(M,M)=0$
or $\awtate^0(M,M)=0$, then $M$ is in $\catw$.
\end{lem}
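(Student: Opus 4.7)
The plan is to unpack both vanishing conditions into explicit splitting statements for the canonical maps associated to a complete $\catw$-resolution of $M$, and then invoke closure of $\catw$ under direct summands.

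Since $M\in\opg(\catw)$, Definition~\ref{defG} furnishes a totally $\catw$-acyclic complex $T$ with $M\cong\coker(\partial^T_1)$; by exactness of $T$, one also has $M\cong\ker(\partial^T_{-1})$, so $T$ determines canonical maps $\pi\colon T_0\onto M$ and $\iota\colon M\hookrightarrow T_{-1}$ satisfying $\partial^T_0=\iota\pi$. By Fact~\ref{cplt03}, the diagrams $T\to T_{\geq 0}\to M$ and $M\to\shift T_{<0}\to\shift T$ are respectively a Tate $\catw$-resolution and a Tate $\catw$-coresolution of $M$, so they can be used directly to compute $\watate^{*}(M,M)$ and $\awtate^{*}(M,M)$.

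For the first case, I would compute $\watate^0(M,M)=\HH_0(\ahom(T,M))$ by hand. Since $M$ is concentrated in degree $0$, the complex $\ahom(T,M)$ simplifies to $\cdots\to\ahom(T_{-1},M)\to\ahom(T_0,M)\to\ahom(T_1,M)\to\cdots$. The maps $f\colon T_0\to M$ annihilating $\partial^T_1$ factor uniquely through $\pi$, giving a canonical identification $\ker(\partial_0)\cong\ahom(M,M)$. Under this identification, $\im(\partial_1)$ corresponds to the maps $M\to M$ of the form $g\iota$ with $g\in\ahom(T_{-1},M)$. Thus $\watate^0(M,M)=0$ forces $\id_M=g\iota$ for some $g$, so $\iota$ is a split monomorphism, exhibiting $M$ as a direct summand of $T_{-1}\in\catw$; closure of $\catw$ under direct summands then gives $M\in\catw$.

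For the second case, a dual computation using the Tate $\catw$-coresolution $M\to\shift T_{<0}\to\shift T$ identifies $\awtate^0(M,M)=\HH_0(\ahom(M,\shift T))$ with the cokernel of the map $\ahom(M,T_0)\to\ahom(M,M)$, $g\mapsto\pi g$. Vanishing then forces $\pi$ to be a split epimorphism, so $M$ is a direct summand of $T_0\in\catw$, and again $M\in\catw$. The main step is the explicit identification of the $0$th homology of these two Hom-complexes with the cokernels above; everything else is routine sign- and index-chasing, and no serious obstruction is anticipated. The argument runs parallel to the classical statement~\cite[(5.9)]{avramov:aratc} that Tate cohomology vanishing detects projectivity of a Gorenstein projective module.
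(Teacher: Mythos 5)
Your proposal is correct and follows essentially the same route as the paper: both arguments factor $\partial^T_0$ through $M\cong\coker(\partial^T_1)\cong\im(\partial^T_0)$ and interpret the vanishing of $\HH_0(\ahom(T,M))$ (the paper phrases this as the null-homotopy of the chain map $T\to M$, you as the identification of $\HH_0$ with $\ahom(M,M)$ modulo maps factoring through the inclusion) to split the monomorphism $M\hookrightarrow T_{-1}$, then invoke closure under direct summands. The only cosmetic difference is that you write out the dual case for $\awtate^0(M,M)=0$ explicitly, which the paper dispatches with ``the proof of the other case is dual.''
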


\begin{proof}
We prove the case where $\watate^0(M,M)=0$;
the proof of the other case is dual.
From Remark~\ref{cplt03} there is a Tate $\catw$-resolution
$T\xra\alpha W\xra\gamma M$ such that $\alpha_n$ is an isomorphism
for all $n\geq 0$.
This induces the second and third isomorphisms in the following sequence
$$\im(\partial^T_0)\xla[\cong]{\sigma} \coker(\partial^T_1)\xra[\cong]{\ol{\alpha_0}}
\coker(\partial^W_1)\xra[\cong]{\ol{\gamma_0}}
M.
$$
The 
first isomorphism comes from the exactness of $T$.
It is straightforward to show that the left-most rectangle in
the following diagram commutes
\begin{equation*} 
\xymatrix{
W_0\ar[d]_{\gamma_0}&T_0\ar[l]_-{\alpha_0}^-\cong \ar[d]_{\pi}\ar[r]^-{\partial^T_0} &T_{-1} \\
M & \coker(\partial^T_1) \ar[l]_-{\ol{\gamma_0}\,\ol{\alpha_0}}^-\cong\ar[r]^-\sigma_-\cong
&\im(\partial^T_0).\ar@{^(->}[u]_-\epsilon 
}
\end{equation*}
Here, the morphisms $\pi$ and $\epsilon$ are the natural surjection and injection, respectively,
and it follows that the
right-most rectangle  also commutes.
This diagram provides a monomorphism
$f=\epsilon\sigma(\ol{\gamma_0}\,\ol{\alpha_0})^{-1}\colon M\hookrightarrow T_{-1}$ such that 
\begin{equation} \label{compose02}
f\gamma_0\alpha_0=\partial^T_0.
\end{equation}
The vanishing hypothesis 
$$0=\watate^0(M,M)=\HH_0(\ahom(T,M))$$
implies that every chain map $T\to M$ is null-homotopic.
In particular, the chain map $T\xra{\gamma\alpha} M$ is null-homotopic
with homotopy $s$ as in the next diagram
$$\xymatrix{
\cdots\ar[rr]^{\partial^T_2} 
&& T_1\ar[rr]^{\partial^T_1}\ar[d]^{\gamma_{1}\alpha_{1}} 
&& T_0\ar[rr]^{\partial^T_0}\ar[d]^{\gamma_{0}\alpha_{0}} \ar[lld]^{s_0=0}
&& T_{-1}\ar[rr]^{\partial^T_{-1}}\ar[d]^{\gamma_{-1}\alpha_{-1}} \ar[lld]^{s_{-1}}
&& \cdots \\
\cdots\ar[rr] && 0 \ar[rr] && M \ar[rr] && 0 \ar[rr] && \cdots.
}
$$
This yields a morphism $s_{-1}\colon T_{-1}\to M$ such that 
\begin{equation} \label{compose01}
\gamma_0\alpha_0=s_{-1}\partial^T_0.
\end{equation}
Combine~\eqref{compose01} and~\eqref{compose02} to obtain the following sequence
$$s_{-1}f\gamma_0\alpha_0=s_{-1}\partial^T_0=\gamma_0\alpha_0=\id_M\gamma_0\alpha_0$$
and use the fact that $\gamma_0\alpha_0$ is surjective to conclude that $s_{-1}f=\id_M$.
Thus, the morphism $f\colon M\to T_{-1}$ is a split monomorphism.
Since $\catw$ is closed under direct summands and $T_{-1}$ is in $\catw$,
it follows that $M$ is in $\catw$, as desired.
\end{proof}

The next result contains a partial converse 
Proposition~\ref{vanish01}\eqref{vanish01item1}, as in~\cite[(5.9)]{avramov:aratc}.

\begin{thm} \label{tatewhat01}
Assume that $\catw$ is exact and 
closed under kernels of epimorphisms
and that $\catw\perp\catw$.
For an object $M\in\finrescat{G(W)}$, the next conditions are equivalent:
\begin{enumerate}[\quad\rm(i)]
\item \label{tatewhat01item1}
$\wpd(M)<\infty$;
\item \label{tatewhat01item3}
$\watate^n(-,M)=0$  for each (equivalently, for some) $n\in\mathbb{Z}$;
\item \label{tatewhat01item2}
$\watate^n(M,-)=0$  for each (equivalently, for some) $n\in\mathbb{Z}$;
and
\item \label{tatewhat01item6}
$\watate^0(M,M)=0$.
\end{enumerate}
\end{thm}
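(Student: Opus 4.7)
The implications (i)$\Rightarrow$(ii),(iii),(iv) will follow immediately from Proposition~\ref{vanish01}\eqref{vanish01item1}, so the substance lies in the converses, and the plan for each is to reduce to the situation handled by Lemma~\ref{tateperp01}.

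I begin by setting up a common reduction. Since $M\in\cpltrescatw$ by Theorem~\ref{cplt05} and the pair $(\catw,\opg(\catw))$ satisfies the Auslander-Buchweitz hypotheses by Fact~\ref{sgcsummary}, Definition~\ref{ab01} yields an approximation $0\to K\to X\to M\to 0$ with $K\in\finrescatw$ and $X\in\opg(\catw)$. A standard induction on $\wpd(K)$ using $\catw\perp\catw$ makes this sequence $\ahom(\catw,-)$-exact, and all three terms lie in $\cpltrescatw$ by Fact~\ref{cplt03}. Combining Lemmas~\ref{les01}\eqref{les01item1} and~\ref{les02}\eqref{les02item2} with the vanishings $\watate^*(K,-)=0=\watate^*(-,K)$ coming from Proposition~\ref{vanish01}, I obtain natural isomorphisms
$$ \watate^n(-,X)\cong\watate^n(-,M) \qquad\text{and}\qquad \watate^n(X,-)\cong\watate^n(M,-) $$
for every $n\in\mathbb Z$.

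For (iv)$\Rightarrow$(i), chaining these two isomorphisms at $n=0$ will give $\watate^0(X,X)\cong\watate^0(M,M)=0$, so Lemma~\ref{tateperp01} forces $X\in\catw$, and splicing a $\catw$-resolution of $K$ into the approximation bounds $\wpd(M)\leq\wpd(K)+1<\infty$. For (ii)$\Rightarrow$(i), I start from $\watate^n(-,M)=0$ at a single $n$, transfer through the first displayed isomorphism to $\watate^n(-,X)=0$, and iterate Lemma~\ref{dimshft03}\eqref{dimshft03item1} along the $\ahom(\catw,-)$-exact syzygy sequences
$$ 0\to\Omega X\to T_0\to X\to 0 \qquad\text{and}\qquad 0\to X\to T_{-1}\to\Omega^{-1}X\to 0 $$
extracted from a complete $\catw$-resolution $T$ of $X$, choosing the direction that lowers the exponent. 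After $|n|$ shifts I reach a syzygy $Y\in\opg(\catw)$ with $\watate^0(-,Y)=0$; in particular $\watate^0(Y,Y)=0$, so Lemma~\ref{tateperp01} forces $Y\in\catw$. Walking back through the intervening syzygy sequences then places $X$ in $\finrescatw$: when $Y$ comes from the coresolution half of $T$, closure of $\catw$ under kernels of epimorphisms successively drags each earlier syzygy into $\catw$; when $Y$ comes from the resolution half, each syzygy sequence is itself a length-one $\catw$-resolution, and splicing these bounds $\wpd(X)\leq|n|$. Either way the approximation delivers $\wpd(M)<\infty$. Case (iii)$\Rightarrow$(i) is strictly dual, using the second displayed isomorphism and Lemma~\ref{dimshft03}\eqref{dimshft03item2} to shift in the first argument.

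The principal bookkeeping obstacle will be coordinating the direction of the syzygy shift with the sign of $n$: only one direction lowers the exponent under Lemma~\ref{dimshft03}, and the walk-back step then appeals to a different closure property of $\catw$ in each case. Apart from this choreography, the argument is a clean assembly of tools already at hand: the Auslander-Buchweitz approximation, the comparison long exact sequences from Section~\ref{sec4}, the Tate dimension-shift lemmas, and the pivotal Lemma~\ref{tateperp01}.
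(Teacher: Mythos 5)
Your proposal is correct and follows essentially the same route as the paper: an Auslander--Buchweitz decomposition of $M$, dimension shifting with the Tate long exact sequences, and Lemma~\ref{tateperp01} as the crux. The only real variation is organizational: the paper works with the $\catw\opg(\catw)$-\emph{hull} $0\to M\to K\to M^{(-1)}\to 0$ and pivots all three converses through condition~(iv) by shifting the arguments of $\watate^n(\cdot,M)$, whereas you work with the \emph{approximation} $0\to K\to X\to M\to 0$, transfer the vanishing to the $\opg(\catw)$-object $X$ first, and then shift along (co)syzygies of $X$ before walking back; both are valid and differ only in bookkeeping.
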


\begin{proof}
Fact~\ref{sgcsummary} yields a $\catw\opg(\catw)$-hull
\begin{equation} \label{exact21} 
0\to M\to K \to M^{(-1)}\to 0
\end{equation}
that is, an exact sequence  with $K\in\finrescatw$ and $M^{(-1)}\in\opg(\catw)$;
see Definition~\ref{ab01}. 
Fact~\ref{sgcsummary} implies that $\catw\perp M$,
so the sequence~\eqref{exact21} is $\ahom(\catw,-)$-exact.
From the assumption 
$\catw\perp\catw$, we conclude that $\catw\perp\finrescatw$.
In particular, we have
$\catw\perp K$, and a standard  argument 
implies that $\catw\perp M^{(-1)}$.

Fact~\ref{sgcsummary} shows that 
Lemma~\ref{cplt01}\eqref{cplt01item1} applies to the category $\catx=\opg(\catw)$.
So, the object $M\in \finrescat{G(W)}$ 
admits a proper $\catw$-resolution $W\xra{\gamma} M$.

The implication \eqref{tatewhat01item1}$\implies$\eqref{tatewhat01item3} 
follows from
Proposition~\ref{vanish01}\eqref{vanish01item1}.

\eqref{tatewhat01item3}$\implies$\eqref{tatewhat01item6}.
Assume that $\watate^n(-,M)=0$ for  some $n\in\mathbb{Z}$.
If $n=0$, then condition~\eqref{tatewhat01item6} follows immediately.

Assume next  that $n<0$. 
Set $M^{(0)}=M$ and $M^{(i)}=\im(\partial^W_i)$ for each $i\geq 1$.
The next exact sequences
are  $\ahom(\catw,-)$-exact because $W$ is a proper resolution
\begin{equation} \label{exact20} 
0\to M^{(i)}\to W_{i-1}\to M^{(i-1)}\to 0.
\end{equation}
Since $M^{(0)},W_{i}\in\cpltrescatw$,
induction on $i$ implies that each $M^{(i)}$ is in $\cpltrescatw$ by
Corollary~\ref{cplt05}\eqref{cplt05item1}.
Repeated application of Lemma~\ref{dimshft03}\eqref{dimshft03item2}
yields the  isomorphisms in the following sequence
$$\watate^0(M,M)=\watate^0(M^{(0)},M)\cong 
\watate^{n}(M^{(-n)},M)=0$$
while the vanishing is by hypothesis.

Assume next  $n>0$. 
The object $M^{(-1)}$ from~\eqref{exact21} is in $\opg(\catw)$.
For $i\leq -2$ use the complete $\catw$-resolution of $M^{(-1)}$
to construct exact sequences
\begin{equation*}  
0\to M^{(i+1)}\to W_{i}\to M^{(i)}\to 0
\end{equation*}
with $W_i\in\catw$ and
$M^{(i)}\in\opg(\catw)$.
Since the complete $\catw$-resolution of $M^{(-1)}$
is $\ahom(\catw,-)$-exact, the same is true of each of these sequences.
A standard argument shows that
$\catw\perp M^{(i)}$ for each $i\leq 2$.
Repeated application of Lemma~\ref{dimshft03}\eqref{dimshft03item2}
yields the  isomorphisms in the following sequence
$$\watate^0(M,M)\cong \watate^{1}(M^{(-1)},M)\cong \cdots
\cong \watate^{n}(M^{(-n)},M)=0$$
while the vanishing is by hypothesis. 

The  implications
\eqref{tatewhat01item1}$\implies$\eqref{tatewhat01item2}$\implies$\eqref{tatewhat01item6}
are verified similarly.

\eqref{tatewhat01item6}$\implies$\eqref{tatewhat01item1}.
Assume $\watate^0(M,M)=0$ and again consider the $\catw\catx$-hull~\eqref{exact21}.
The isomorphisms in the following sequence are from Lemma~\ref{dimshft03},
$$\watate^0(M^{(-1)},M^{(-1)})\cong\watate^{-1}(M,M^{(-1)})\cong\watate^{0}(M,M)=0$$
while the vanishing is by hypothesis. 
Since $M^{(-1)}\in\opg(\catw)$, Lemma~\ref{tateperp01}
implies that $M^{(-1)}\in\catw$.  Since $K$ is in $\finrescatw$, the exact sequence~\eqref{exact21}
implies that $M\in\finrescatw$, using~\cite[(3.5)]{auslander:htmcma}.
\end{proof}

\begin{para} \label{prfB}
\emph{Proof of Theorem~\ref{thmB}.}
Theorem~\ref{tatewhat01}
applies to $\catw=\catpc(R)$ by Fact~\ref{disc01}.
\qed
\end{para}

The proofs of the next results are dual to those of Theorem~\ref{tatewhat01}
and Theorem~\ref{thmB}.

\begin{thm} \label{tatewhat02}
Assume that $\catv$ is exact and 
closed under cokernels of monomorphisms
and that $\catv\perp\catv$.
For each $M\in\fincorescat{G(V)}$, the following are equivalent:
\begin{enumerate}[\quad\rm(i)]
\item \label{tatewhat02item1}
$\vid(M)<\infty$;
\item \label{tatewhat02item3}
$\avtate^n(-,M)=0$  for each (equivalently, for some) $n\in\mathbb{Z}$;
\item \label{tatewhat02item2}
$\avtate^n(M,-)=0$   for each (equivalently, for some) $n\in\mathbb{Z}$;
and
\item \label{tatewhat02item6}
$\avtate^0(M,M)=0$. \qed
\end{enumerate}
\end{thm}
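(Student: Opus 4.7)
The plan is to mirror the proof of Theorem~\ref{tatewhat01} step by step, dualizing each ingredient so that exact sequences are reversed and resolutions give way to coresolutions. First, I would invoke the coresolution analogue of Definition~\ref{ab01} (with $\caty=\opg(\catv)$) to produce an exact sequence
\begin{equation*}
0\to M^{(1)}\to V\to M\to 0,
\end{equation*}
with $V\in\fincorescatv$ and $M^{(1)}\in\opg(\catv)$. The dual of the reasoning in Theorem~\ref{tatewhat01} shows this sequence is $\ahom(-,\catv)$-exact: $\catv\perp\catv$ together with Fact~\ref{sgcsummary} yields $\opg(\catv)\perp\catv$, and a standard induction along a bounded $\opg(\catv)$-coresolution of $M$ then gives $M\perp\catv$. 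Lemma~\ref{cplt01}\eqref{cplt01item2} applied with $\caty=\opg(\catv)$ supplies a proper $\catv$-coresolution of $M$, so every Tate cohomology group appearing in the statement is defined.

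The implications \eqref{tatewhat02item1}$\implies$\eqref{tatewhat02item3} and \eqref{tatewhat02item1}$\implies$\eqref{tatewhat02item2} follow at once from Proposition~\ref{vanish01}\eqref{vanish01item3}. For the ``for some $n$'' reductions \eqref{tatewhat02item3},\eqref{tatewhat02item2}$\implies$\eqref{tatewhat02item6}, I would imitate the original dimension-shift argument using Lemma~\ref{dimshft04} in place of Lemma~\ref{dimshft03}: for $n\leq 0$, carve short exact sequences from the proper $\catv$-coresolution of $M$ (each $\ahom(-,\catv)$-exact, involving only $\cpltcorescatv$-objects by Corollary~\ref{cplt05}\eqref{cplt05item2}) and iterate Lemma~\ref{dimshft04}; for $n>0$, use instead the complete $\catv$-coresolution of $M^{(1)}\in\opg(\catv)$, whose associated short exact sequences are $\ahom(-,\catv)$-exact by Definition~\ref{defG} and whose cosyzygies all lie in $\opg(\catv)\subseteq\cpltcorescatv$.

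The heart of the argument is \eqref{tatewhat02item6}$\implies$\eqref{tatewhat02item1}. Given $\avtate^0(M,M)=0$, I would apply Lemma~\ref{dimshft04}\eqref{dimshft04item1} (with second variable $M^{(1)}$) and then Lemma~\ref{dimshft04}\eqref{dimshft04item2} (with first variable $M$) to the approximation sequence to produce
\begin{equation*}
\avtate^0(M^{(1)},M^{(1)})\cong\avtate^1(M,M^{(1)})\cong\avtate^0(M,M)=0.
\end{equation*}
Since $\catv$ is exact (hence closed under direct summands) and $M^{(1)}\in\opg(\catv)$, Lemma~\ref{tateperp01}---already stated symmetrically in the two Tate variants and therefore applicable verbatim with $\catw$ replaced by $\catv$---forces $M^{(1)}\in\catv$. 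Feeding this back into the approximation sequence together with $V\in\fincorescatv$ and invoking~\cite[(3.5)]{auslander:htmcma} (the fact that $\fincorescatv$ enjoys the two-of-three property) then yields $\vid(M)<\infty$.

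The step I expect to demand the most care is verifying that the short exact sequences used in the dimension-shift for $n>0$ are $\ahom(-,\catv)$-exact and involve only objects in $\cpltcorescatv$, so that Lemma~\ref{dimshft04} genuinely applies. This is a direct mirror of the corresponding bookkeeping in the proof of Theorem~\ref{tatewhat01}, but it is the most technical portion of the dualization.
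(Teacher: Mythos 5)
Your proposal follows the same route as the paper, which establishes this theorem simply by dualizing the proof of Theorem~\ref{tatewhat01}, and most of your dualization is executed correctly: the sequence $0\to M^{(1)}\to V\to M\to 0$ with $V\in\fincorescatv$ and $M^{(1)}\in\opg(\catv)$ is the right dual of the $\catw\opg(\catw)$-hull, your verification that $M\perp\catv$ makes it $\ahom(-,\catv)$-exact, the implications out of \eqref{tatewhat02item1} do follow from Proposition~\ref{vanish01}\eqref{vanish01item3}, and the chain $\avtate^0(M^{(1)},M^{(1)})\cong\avtate^1(M,M^{(1)})\cong\avtate^0(M,M)$ uses the two parts of Lemma~\ref{dimshft04} exactly as needed, after which Lemma~\ref{tateperp01} (its second alternative) and the two-of-three property correctly finish \eqref{tatewhat02item6}$\implies$\eqref{tatewhat02item1}.

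The step that fails as written is your uniform case-division by the sign of $n$ in the reductions \eqref{tatewhat02item3},\eqref{tatewhat02item2}$\implies$\eqref{tatewhat02item6}; the correct pairing is opposite for the two hypotheses. For \eqref{tatewhat02item2}, i.e.\ $\avtate^n(M,-)=0$, you keep $M$ in the first slot and shift the second variable by Lemma~\ref{dimshft04}\eqref{dimshft04item2}, and there your assignment is right: the sequences in which $M$ and its syzygies are quotients (the sequence $0\to M^{(1)}\to V\to M\to 0$ followed by those cut from the complete $\catv$-resolution of $M^{(1)}$) raise the degree and handle $n>0$, while the proper-coresolution cosyzygy sequences, in which $M$ is the sub, lower it and handle $n<0$. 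But for \eqref{tatewhat02item3}, i.e.\ $\avtate^n(-,M)=0$, the hypothesis only gives vanishing with $M$ in the second slot, so you must shift the first variable by Lemma~\ref{dimshft04}\eqref{dimshft04item1}, which sends the sub $L'$ in degree $n$ to the quotient $L''$ in degree $n+1$. Since $M$ is the sub of the cosyzygy sequences $0\to M\to V_0\to M^{(-1)}\to 0$, etc., iterating those only yields $\avtate^0(M,M)\cong\avtate^k(M^{(-k)},M)$ for $k\geq 0$, so they reach $n>0$ and can never reach a negative $n$; dually, the sequence $0\to M^{(1)}\to V\to M\to 0$ and the complete $\catv$-resolution of $M^{(1)}$ lower the degree and handle $n<0$. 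So for \eqref{tatewhat02item3} the pairing of the two families of sequences with the sign of $n$ must be swapped relative to your recipe; with that swap, and the bookkeeping you already supplied ($\ahom(-,\catv)$-exactness and membership in $\cpltcorescatv$), the argument goes through and agrees with the paper's intended dual proof.
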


\begin{cor} \label{thmB'}
Let $R$ be a commutative ring, and let $C$ be a semidualizing $R$-module.
For an $R$-module $M$ with
$\opg(\catic)\text{-}\id_R(M)<\infty$,
the following are equivalent:
\begin{enumerate}[\quad\rm(i)]
\item \label{thmB'1}
$\catic\text{-}\id_R(M)<\infty$;
\item \label{thmB'3}
$\smash{\comp{\ext}}_{\catic}^n(M,-)=0$ 
for each (equivalently, for some) $n\in\mathbb{Z}$;
\item \label{thmB'2}
$\smash{\comp{\ext}}_{\catic}^n(-,M)=0$  for each (equivalently, for some) $n\in\mathbb{Z}$;
and
\item \label{thmB'6}
$\smash{\comp{\ext}}_{\catic}^0(M,M)=0$.\qed
\end{enumerate}
\end{cor}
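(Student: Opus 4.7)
The plan is to reduce the corollary directly to Theorem~\ref{tatewhat02} by taking $\catv = \catic(R)$. First, I would verify that this choice of $\catv$ satisfies the hypotheses of that theorem: by Fact~\ref{disc01}, the category $\catic(R)$ is exact, closed under cokernels of monomorphisms, and satisfies $\catic(R)\perp\catic(R)$. This is exactly the input required.

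Second, I would translate the finiteness hypothesis. The assumption $\opg(\catic)\text{-}\id_R(M)<\infty$ says precisely that $M\in\fincorescat{G(\catic(R))}$, which is the object-level hypothesis in Theorem~\ref{tatewhat02}. With Notation~\ref{notn2} in force, the four conditions of Theorem~\ref{tatewhat02} unravel to the four conditions here: $\vid(M)<\infty$ becomes $\catic\text{-}\id_R(M)<\infty$, and the Tate functors $\smash{\comp{\ext}}_{\cata\catv}$ become $\smash{\comp{\ext}}_{\catic}$.

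Thus the proof reduces to a single sentence, exactly parallel to~\eqref{prfB}: Theorem~\ref{tatewhat02} applies to $\catv=\catic(R)$ by Fact~\ref{disc01}, and the four equivalent conditions translate directly into \eqref{thmB'1}--\eqref{thmB'6}. Since Theorem~\ref{tatewhat02} has already been established (dually to Theorem~\ref{tatewhat01}, using the dual forms of Lemmas~\ref{tateperp01}, \ref{dimshft04}, the horseshoe result Lemma~\ref{horseshoe02}\eqref{horseshoe02item2}, and Fact~\ref{sgcsummary} for the injective cogenerator/projective generator data for $\opg(\catic(R))$), there is no genuine obstacle here—everything has been packaged into Theorem~\ref{tatewhat02} precisely so that this corollary is a one-line verification of hypotheses.
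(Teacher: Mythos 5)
Your proposal is correct and takes exactly the route the paper intends: the text before Theorem~\ref{tatewhat02} states that the proofs are dual to those of Theorem~\ref{tatewhat01} and Theorem~\ref{thmB}, and \eqref{prfB} proves Theorem~\ref{thmB} by the one-line observation that Theorem~\ref{tatewhat01} applies to $\catw=\catpc(R)$ via Fact~\ref{disc01}; your instantiation of Theorem~\ref{tatewhat02} at $\catv=\catic(R)$ and your verification of its hypotheses via Fact~\ref{disc01} is the precise dual of that. The only cosmetic point worth noting is that conditions~\eqref{thmB'3} and~\eqref{thmB'2} appear in the opposite order from~\eqref{tatewhat02item3} and~\eqref{tatewhat02item2}, which of course does not affect the equivalence.
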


The next two results compare to~\cite[(7.2)]{avramov:aratc} and~\cite[(4.8)]{sather:gcac}.

\begin{cor} \label{isos}
Assume that $\catx$ is exact and closed under kernels of epimorphisms.
Assume that $\catw$ is closed under direct summands and kernels of epimorphisms.
Assume that $\catw$ is both an injective cogenerator 
and a projective generator for $\catx$.
Let $M$ be an object of $\cata$ with $d=\xpd(M)<\infty$. The next conditions are equivalent:
\begin{enumerate}[\quad\rm(i)]
\item \label{isositem1i}
$\wpd(M)<\infty$;
\item \label{isositem1ii}
The natural transformation $\xwacomp^i(M,-)\colon\xaext^i(M,-)\xra{\cong}\waext^i(M,-)$ 
is an isomorphism 
for each $i$; and
\item \label{isositem1iii}
The natural transformation
$\xwacomp^i(M,-)\colon\xaext^i(M,-)\xra{\cong}\waext^i(M,-)$ is an isomorphism  
either for two successive values of  $i$ with $1\leq i<d$ or for a single value of $i$ with $i\geq d$.
\end{enumerate}
\end{cor}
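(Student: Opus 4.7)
The plan is to use the long exact sequence of Theorem~\ref{mixed01} to translate the isomorphism conditions on $\xwacomp^i(M,-)$ into vanishing of the Tate cohomology $\watate^i(M,-)$, and then invoke Theorem~\ref{tatewhat01} to conclude $\wpd(M)<\infty$. First I would verify the ambient hypotheses: $\catw\perp\catw$ holds since $\catw\subseteq\catx$ and $\catw$ is both a projective generator and an injective cogenerator for $\catx$ (so $\catw\perp\catx$ and $\catx\perp\catw$), and an Auslander--Buchweitz splicing of a proper $\catw$-resolution with a proper $\catw$-coresolution then shows $\catx\subseteq\opg(\catw)$. Combined with Theorem~\ref{cplt05}, this places $M\in\finrescatx\subseteq\finrescat{G(W)}=\cpltrescatw$, so $\watate^n(M,-)$ is defined and Theorem~\ref{mixed01} is available.

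The implication (i)$\implies$(ii) then follows directly: Proposition~\ref{vanish01}(\ref{vanish01item1}) gives $\watate^n(M,-)=0$ for all $n$, and the long exact sequence of Theorem~\ref{mixed01} forces $\xwacomp^i(M,-)$ to be an isomorphism for $1\leq i\leq d$; for $i>d$ both sides vanish (the source by $\xpd(M)=d$, the target via $\waext^i\cong\watate^i=0$). The implication (ii)$\implies$(iii) is trivial.

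The substance is in (iii)$\implies$(i), and my plan is a case split producing a single index $i$ with $\watate^i(M,-)=0$, after which Theorem~\ref{tatewhat01} delivers~(i). If $\xwacomp^i(M,-)$ is iso for some $i>d$, then $\xaext^i(M,-)=0$ forces $\waext^i(M,-)=0$, hence $\watate^i(M,-)=0$ via $\waext^i\cong\watate^i$. If $\xwacomp^d(M,-)$ is iso, then the tail $\xaext^d\xra{\cong}\waext^d\xra{\wacomp^d}\watate^d\to0$ of the long exact sequence shows $\wacomp^d$ is simultaneously zero and surjective, so $\watate^d(M,-)=0$. If $\xwacomp^i$ and $\xwacomp^{i+1}$ are both iso with $1\leq i<d$ (so that $i+1\leq d$), then in the four-term segment $\waext^i\xra{\wacomp^i}\watate^i\xra{\partial}\xaext^{i+1}\xra{\xwacomp^{i+1}}\waext^{i+1}$ of the long exact sequence, the iso at index $i$ gives $\wacomp^i=0$ (making $\partial$ injective), while the iso at index $i+1$ gives $\partial=0$ (its image lies in the trivial kernel of $\xwacomp^{i+1}$); together these force $\watate^i(M,-)=0$.

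The main obstacle I anticipate is the third case of the case analysis: it relies on the bound $i<d$ (so $i+1\leq d$) keeping $\xwacomp^{i+1}$ inside the canonical range where Theorem~\ref{mixed01} supplies the long exact sequence. A secondary bookkeeping point is checking that the hypotheses of Theorem~\ref{tatewhat01} on $\catw$ (closure properties and $\catw\perp\catw$) are genuinely supplied by those of the corollary, which reduces to the perpendicularity observations made in the first paragraph.
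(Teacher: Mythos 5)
Your proposal is correct and follows essentially the same route as the paper's proof: convert the isomorphism hypotheses into vanishing of $\watate^i(M,-)$ via the long exact sequence of Theorem~\ref{mixed01}, then invoke Theorem~\ref{tatewhat01}. The only minor variations are that the paper cites~\cite[(4.8)]{sather:gcac} for (i)$\Rightarrow$(ii) and, in the $i>d$ case, reads $\wpd(M)<\infty$ directly from $\waext^i(M,-)=0$ via~\cite[(4.5)]{sather:gcac}, whereas you re-derive the former from the long exact sequence and route the latter through $\watate^i\cong\waext^i=0$ and Theorem~\ref{tatewhat01} — both are valid.
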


\begin{proof}
The implication~\eqref{isositem1i}$\implies$\eqref{isositem1ii} is in~\cite[(4.8)]{sather:gcac}, and
\eqref{isositem1ii}$\implies$\eqref{isositem1iii} is trivial.  

For \eqref{isositem1iii}$\implies$\eqref{isositem1i}, we consider three cases.

Case 1: The natural transformations $\xwacomp^i(M,-)$ and $\xwacomp^{i+1}(M,-)$ are 
isomorphisms  
where $1\leq i<d-1$.
In this case, use the long exact
sequence in Theorem~\ref{mixed01} to conclude that
$\watate^i(M,-)=0$.  The conclusion
$\wpd(M)<\infty$ then follows from Theorem~\ref{tatewhat01}.

Case 2: The natural transformation $\xwacomp^d(M,-)$ is an isomorphism.
As in Case 1, we conclude that $\watate^d(M,-)=0$ and hence
$\wpd(M)<\infty$.

Case 3: The natural transformation $\xwacomp^i(M,-)$ is an isomorphism
for some $i>d$. Our assumption yields the isomorphism in the next sequence
$$\waext^i(M,-)\cong\xaext^i(M,-)=0$$
while the vanishing is from~\cite[(4.5.b)]{sather:gcac} since $i>d=\xpd(M)$.
From~\cite[(4.5.a)]{sather:gcac} we conclude that
$\wpd(M)<i<\infty$.
\end{proof}

Our next result augments the previous one in the special case $\catx=\opg(\catw)$.

\begin{cor} \label{isos'}
Assume that $\catw\perp\catw$ and that
$\catw$ is closed under direct summands and kernels of epimorphisms.
Let $M$ be an object of $\cata$ with $d=\gwpd(M)<\infty$. The following conditions are equivalent:
\begin{enumerate}[\quad\rm(i)]
\item \label{isos'item1i}
$\wpd(M)<\infty$;
\item \label{isos'item1iv}
The transformation
$\gwwacomp^i(-,M)\colon\gwaext^i(-,M)\xra{\cong}\waext^i(-,M)$
is an isomorphism on $\cpltrescatw$
for each $i$; and
\item \label{isos'item1v}
The transformation $\gwwacomp^i(-,M)\colon\gwaext^i(-,M)\xra{\cong}\waext^i(-,M)$
is an isomorphism on $\cpltrescatw $
either for a single value of $i$ with $i\geq d$
or for two successive values of  $i$ with $1\leq i<d$.
\end{enumerate}
\end{cor}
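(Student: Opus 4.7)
My plan is to establish (i)$\Rightarrow$(iv)$\Rightarrow$(v)$\Rightarrow$(i), with Theorem~\ref{mixed01} (applied with $\catx = \gw = \opg(\catw)$, whose hypotheses hold by Fact~\ref{sgcsummary}) relating the three cohomology theories and Theorem~\ref{tatewhat01} converting vanishing of Tate cohomology back to $\wpd(M) < \infty$. Theorem~\ref{cplt05} identifies $\cpltrescatw$ with $\finrescat{G(W)}$, so Theorem~\ref{mixed01} is applicable at every $X$ in the domain of the natural transformation $\gwwacomp^i(-,M)$. The implications (i)$\Rightarrow$(iv) and (iv)$\Rightarrow$(v) are routine: Proposition~\ref{vanish01}\eqref{vanish01item1} gives $\watate^n(-,M) = 0$ for all $n$ once $\wpd(M) < \infty$, and the long exact sequence of Theorem~\ref{mixed01} then displays each $\gwwacomp^i(X,M)$ as an isomorphism; the second implication is tautological.

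For the two-successive-values branch of (v)$\Rightarrow$(i), I would pinch the long exact sequence of Theorem~\ref{mixed01} at $\watate^i$: the iso of $\gwwacomp^i(X,M)$ forces $\wacomp^i(X,M) = 0$, the iso of $\gwwacomp^{i+1}(X,M)$ forces the connecting map $\watate^i(X,M) \to \gwaext^{i+1}(X,M)$ to vanish, and exactness at $\watate^i(X,M)$ then collapses $\watate^i(X,M) = 0$ uniformly in $X \in \cpltrescatw$. Theorem~\ref{tatewhat01}\eqref{tatewhat01item3}$\Rightarrow$\eqref{tatewhat01item1} delivers $\wpd(M) < \infty$.

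The single-value branch $i = k \geq d$ is the main obstacle, since one iso is insufficient to collapse the long exact sequence uniformly in $X$. My plan is to sidestep the uniform statement and target instead $\watate^0(M,M) = 0$, which by Theorem~\ref{tatewhat01}\eqref{tatewhat01item6}$\Rightarrow$\eqref{tatewhat01item1} still suffices. Mimicking the dimension-shift template in the proof of Theorem~\ref{tatewhat01}, I would splice the $\catw\gw$-hull $0 \to M \to K \to M^{(-1)} \to 0$ with the complete $\catw$-resolution of $M^{(-1)} \in \gw$ to obtain short exact sequences $0 \to M^{(-j+1)} \to T_{-j} \to M^{(-j)} \to 0$ for $j \geq 1$, each with middle term in $\finrescatw$ and outer terms in $\cpltrescatw$. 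These are $\ahom(\catw,-)$-exact because $\catw\perp\gw$ (from Fact~\ref{sgcsummary}) and $\catw\perp\finrescat{G(W)}$ (by a short induction on $\gw$-projective dimension from $\catw\perp\gw$); iterated application of Lemma~\ref{dimshft03}\eqref{dimshft03item2} produces the natural isomorphism $\watate^0(M,M) \cong \watate^k(M^{(-k)}, M)$.

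Finally, I would evaluate the hypothesized iso $\gwwacomp^k(-,M)$ at $X = M^{(-k)} \in \gw$. Since $\gwpd(M^{(-k)}) = 0$, Theorem~\ref{mixed01} gives $\gwaext^{\geq 1}(M^{(-k)}, M) = 0$ together with isomorphisms $\waext^n(M^{(-k)}, M) \cong \watate^n(M^{(-k)}, M)$ for all $n \geq 1$. The iso $\gwwacomp^k(M^{(-k)}, M)\colon 0 \to \waext^k(M^{(-k)}, M)$ then forces $\waext^k(M^{(-k)}, M) = 0$, whence $\watate^k(M^{(-k)}, M) = 0$. Combining with the dimension-shift isomorphism gives $\watate^0(M,M) = 0$, and Theorem~\ref{tatewhat01} closes the argument. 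The key idea in this branch is that plugging the natural transformation into the specially engineered cosyzygy $M^{(-k)} \in \gw$ converts a single iso into an outright vanishing (because $\gwaext^{\geq 1}(M^{(-k)}, M) = 0$ is automatic), which then cascades via dimension shift into the hypothesis $\watate^0(M,M) = 0$ of Theorem~\ref{tatewhat01}\eqref{tatewhat01item6}.
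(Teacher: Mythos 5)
Your argument is correct, and it runs on the same toolkit the paper points to (Theorem~\ref{mixed01} with $\catx=\opg(\catw)$, Theorem~\ref{tatewhat01}, Lemma~\ref{dimshft03}), but for the crucial implication (v)$\Rightarrow$(i) you do genuinely more than the paper's ``follows as in the proof of Corollary~\ref{isos}.'' The transposition is not verbatim: in Corollary~\ref{isos} the fixed module sits in the first variable, so the long exact sequence has length $d$ and Cases 2--3 there make sense; here $M$ sits in the second variable, the sequence at an object $X$ has length $\gwpd(X)$ rather than $d=\gwpd(M)$, and a single isomorphism does not give the blanket vanishing $\watate^{i}(-,M)=0$ demanded by condition~\eqref{tatewhat01item3} of Theorem~\ref{tatewhat01}. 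Your remedy --- evaluate the hypothesized isomorphism at the cosyzygy $M^{(-k)}\in\opg(\catw)$ manufactured from a $\catw\opg(\catw)$-hull and the complete resolution of its cokernel, where $\gwaext^{k}(M^{(-k)},M)=0$ forces $\waext^{k}(M^{(-k)},M)=0$, hence $\watate^{k}(M^{(-k)},M)=0$ by the supplementary isomorphisms of Theorem~\ref{mixed01}, then shift down to $\watate^{0}(M,M)=0$ and invoke condition~\eqref{tatewhat01item6} of Theorem~\ref{tatewhat01} --- is precisely the reduction buried inside the proof of Theorem~\ref{tatewhat01}, and it even yields a sharper statement: one isomorphism at any single $i\geq 1$ suffices, so the restriction $i\geq d$ in (v) is not needed for this direction. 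Your derivation of (i)$\Rightarrow$(iv) from Proposition~\ref{vanish01}\eqref{vanish01item1} and the long exact sequence is also fine and simply replaces the paper's citation of \cite[(4.10)]{sather:gcac}.

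Two small points to tidy, neither a gap. First, in (i)$\Rightarrow$(iv), for $i>\gwpd(X)$ the isomorphism is not ``displayed by the long exact sequence'' but comes from $\gwaext^{i}(X,M)=0$ together with the supplementary isomorphism $\waext^{i}(X,M)\cong\watate^{i}(X,M)=0$ of Theorem~\ref{mixed01}. Second, your pinch in the two-successive-values branch tacitly assumes $i+1\leq\gwpd(X)$; when $\gwpd(X)\leq i$ the vanishing $\watate^{i}(X,M)=0$ follows even more directly (surjectivity at the end of the sequence, or the supplementary isomorphisms), and in any case that whole branch is subsumed by your single-value argument, so this is a one-line patch.
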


\begin{proof}
First note that $\cpltrescatw\subseteq\proprescatw$ by Remark~\ref{cplt02}.
Furthermore, we have
$\cpltrescatw=\finrescat{G(W)}
\subseteq\proprescat{G(W)}$
by Theorem~\ref{cplt05}\eqref{cplt05item1}
and~\cite[(3.3.b)]{sather:gcac}.
The implication \eqref{isos'item1i}$\implies$\eqref{isos'item1iv} now follows from~\cite[(4.10)]{sather:gcac}.
The implication~\eqref{isos'item1iv}$\implies$\eqref{isos'item1v} is trivial,
and~\eqref{isos'item1v}$\implies$\eqref{isos'item1i}
follows as in the proof of Corollary~\ref{isos}.
\end{proof}

The proofs of the last two results of this section are dual to the previous two.

\begin{cor} \label{isos''}
Assume that $\caty$ is exact and closed under cokernels of monomorphisms.
Assume that $\catv$ is closed under direct summands and cokernels of monomorphisms.
Assume that $\catv$ is both an injective cogenerator 
and a projective generator for $\caty$.
Let $N$ be an object of $\cata$ with $d=\yid(N)<\infty$. The following conditions are equivalent:
\begin{enumerate}[\quad\rm(i)]
\item \label{isositem2i}
$\vid(N)<\infty$;
\item \label{isositem2ii}
The natural transformation
$\ayvcomp^i(-,N)\colon\ayext^i(-,N)\cong\avext^i(-,N)$
is an isomorphism 
for each $i$; and
\item \label{isositem2iii}
The natural transformation
$\ayvcomp^i(-,N)\colon\ayext^i(-,N)\cong\avext^i(-,N)$
is an isomorphism either 
for a single value of $i$ with $i\geq d$
or for two successive values of  $i$ with $1\leq i<d$. \qed
\end{enumerate}
\end{cor}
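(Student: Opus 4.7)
The plan is to mirror the proof of Corollary~\ref{isos} in the dual setting, substituting Theorem~\ref{mixed02} for Theorem~\ref{mixed01}, Theorem~\ref{tatewhat02} for Theorem~\ref{tatewhat01}, and the coresolution-side analogues of the results of~\cite{sather:gcac} in place of their resolution-side counterparts. Before executing the argument I would record that the hypotheses of the corollary ensure $\catv\perp\catv$ (because $\catv\subseteq\caty$ and $\caty\perp\catv$, as $\catv$ is an injective cogenerator for $\caty$) and that the hypotheses of Theorems~\ref{mixed02} and~\ref{tatewhat02} are automatic, so both results are available throughout.

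The implication \eqref{isositem2i}$\implies$\eqref{isositem2ii} is the coresolution dual of~\cite[(4.8)]{sather:gcac}: when $\vid(N)<\infty$, the comparison transformation $\ayvcomp^i(-,N)$ is an isomorphism in every degree. The implication~\eqref{isositem2ii}$\implies$\eqref{isositem2iii} is immediate because ``isomorphism for each $i$'' subsumes both alternatives in~\eqref{isositem2iii}.

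For the substantive direction~\eqref{isositem2iii}$\implies$\eqref{isositem2i}, I would split into three cases mirroring those in the proof of Corollary~\ref{isos}. In Case~1, where $\ayvcomp^i(-,N)$ and $\ayvcomp^{i+1}(-,N)$ are isomorphisms for some $1\leq i<d-1$, a direct diagram chase in the long exact sequence of Theorem~\ref{mixed02} (the two flanking iso's force the intermediate $\avtate^i$ term to be squeezed between zeros) yields $\avtate^i(-,N)=0$; then Theorem~\ref{tatewhat02} delivers $\vid(N)<\infty$. Case~2, where $\ayvcomp^d(-,N)$ is an isomorphism, uses that the long exact sequence of Theorem~\ref{mixed02} terminates in $\ayext^d(-,N)\to\avext^d(-,N)\to\avtate^d(-,N)\to 0$, so surjectivity combined with the hypothesis gives $\avtate^d(-,N)=0$ and Theorem~\ref{tatewhat02} again applies. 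Case~3, where $\ayvcomp^i(-,N)$ is an isomorphism for some $i>d$, uses the dual of~\cite[(4.5.b)]{sather:gcac} to get $\ayext^i(-,N)=0$, hence $\avext^i(-,N)=0$ by the assumed isomorphism, and then the dual of~\cite[(4.5.a)]{sather:gcac} yields $\vid(N)<i<\infty$.

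The main obstacle is simply bookkeeping: confirming that the particular citations from~\cite{sather:gcac} used for Corollary~\ref{isos} admit the stated coresolution analogues, and that their hypotheses are compatible with our assumptions on $\caty$ and $\catv$. Since~\cite{sather:gcac} develops the relative-cohomology machinery symmetrically for proper resolutions and proper coresolutions, this verification is essentially formal, and no new technical input beyond the long exact sequence and vanishing criterion of Sections~\ref{sec4}--\ref{sec10} should be needed.
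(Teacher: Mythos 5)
Your proposal is correct and is exactly the paper's argument: the paper dispatches Corollary~\ref{isos''} with the single sentence that it is dual to Corollary~\ref{isos}, and you have carried out that dualization faithfully, replacing Theorem~\ref{mixed01} with Theorem~\ref{mixed02}, Theorem~\ref{tatewhat01} with Theorem~\ref{tatewhat02}, and the resolution-side citations from~\cite{sather:gcac} with their coresolution analogues. Your preliminary observations are also sound (in particular, $\catv\perp\catv$ together with closure under direct summands gives closure under extensions, so $\catv$ is exact and Theorem~\ref{tatewhat02} applies).
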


\begin{cor} \label{isos'''}
Assume that $\catv\perp\catv$ and that
$\catv$ is closed under direct summands and cokernels of monomorphisms.
Let $M$ be an object of $\cata$ with $d=\gvid(M)<\infty$. The following conditions are equivalent:
\begin{enumerate}[\quad\rm(i)]
\item \label{isos''''item1i}
$\vid(M)<\infty$;
\item \label{isos'''item1iv}
The  transformation $\agvvcomp^i(N,-)\colon\agvext^i(N,-)\cong\avext^i(N,-)$ 
is an isomorphism on $\cpltcorescatv$
for each $i$; and
\item \label{isos'''item1v}
The transformation
$\agvvcomp^i(N,-)\colon\agvext^i(N,-)\cong\avext^i(N,-)$
is an isomorphism on $\cpltcorescatv $
either 
for a single value of $i$ with $i\geq d$
or for two successive values of  $i$ with $1\leq i<d$. \qed
\end{enumerate}
\end{cor}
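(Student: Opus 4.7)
The plan is to dualize the proof of Corollary~\ref{isos'} line by line. All the machinery needed is already in place: Theorem~\ref{cplt05}\eqref{cplt05item2} dualizes Theorem~\ref{cplt05}\eqref{cplt05item1}; Theorem~\ref{mixed02} dualizes Theorem~\ref{mixed01}; Theorem~\ref{tatewhat02} dualizes Theorem~\ref{tatewhat01}; and the ancillary results cited from~\cite{sather:gcac} are established there in dual pairs.

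First I would record the two containments $\cpltcorescatv \subseteq \propcorescatv$ (from Fact~\ref{cplt03}) and $\cpltcorescatv = \fincorescat{G(V)} \subseteq \propcorescat{G(V)}$ (from Theorem~\ref{cplt05}\eqref{cplt05item2} together with the dual of~\cite[(3.3.b)]{sather:gcac}). These guarantee that the natural transformation under discussion is defined on the stated domain. The implication~\eqref{isos''''item1i}$\implies$\eqref{isos'''item1iv} then follows at once from the dual of~\cite[(4.10)]{sather:gcac}, and \eqref{isos'''item1iv}$\implies$\eqref{isos'''item1v} is immediate.

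For \eqref{isos'''item1v}$\implies$\eqref{isos''''item1i} I would run the three-case argument from the proof of Corollary~\ref{isos}. If $\agvvcomp^i$ is an isomorphism at two successive values $1 \leq i < d$, or at the single value $i = d$, then the long exact sequence of Theorem~\ref{mixed02} forces the corresponding $\avtate$ to vanish, so Theorem~\ref{tatewhat02} delivers $\vid(M) < \infty$. If the isomorphism holds at a single $i > d$, then the dual of~\cite[(4.5.b)]{sather:gcac} gives $\agvext^i = 0$ in the relevant slot, whence $\avext^i = 0$ as well, and the dual of~\cite[(4.5.a)]{sather:gcac} concludes $\vid(M) < i$.

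The only potential obstacle is bookkeeping: one must verify that each lemma cited from~\cite{sather:gcac} applies in its dual form (injective cogenerator versus projective generator, coresolution versus resolution, and so on). Since the abelian-categorical framework of that paper is formally self-dual and every relevant result is stated in dual pairs, this presents no real mathematical difficulty, and the proof reduces to a careful transliteration of the proofs of Corollaries~\ref{isos} and~\ref{isos'}.
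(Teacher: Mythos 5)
Your proposal is correct and mirrors the paper's own treatment: the paper states that the proofs of Corollaries~\ref{isos''} and~\ref{isos'''} are dual to those of Corollaries~\ref{isos} and~\ref{isos'}, and you carry this out using exactly the duals the paper intends, namely Theorem~\ref{cplt05}\eqref{cplt05item2}, Theorem~\ref{mixed02}, Theorem~\ref{tatewhat02}, and the dual forms of~\cite[(3.3.b), (4.5), (4.10)]{sather:gcac}. You also silently correct a small misprint in the proof of Corollary~\ref{isos'} by citing Fact~\ref{cplt03} (rather than~\ref{cplt02}) for the containment $\cpltcorescatv\subseteq\propcorescatv$.
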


\section{Balance for Tate Cohomology}\label{sec6}

We begin this section with its main result,
which implies Theorem~\ref{thmC}; see~\eqref{prfC}.

\begin{thm} \label{tatebalance01}
Assume that $\catw\perp \catw$ and $\catv\perp\catv$
and $\opg(\catw)\perp\catv$ and $\catw\perp\opg(\catv)$.
Assume that $\catw$ is closed under kernels of epimorphisms and direct summands and
that $\catv$ is closed under cokernels of monomorphisms and direct summands. 
Assume also that $\waext^{\geq 1}(\finrescatw,\catv)=0=\avext^{\geq 1}(\catw,\fincorescatv)$.
For all $M\in\finrescat{G(W)}$ and all $N\in\fincorescat{G(V)}$ and all $n\geq 1$,
we have
$$\watate^{n}(M, N)\cong\avtate^{n}(M, N).$$
If, in addition, we have $\finrescatw=\fincorescatv$, then this isomorphism
holds for all $n\in\mathbb Z$.
\end{thm}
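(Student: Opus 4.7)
The plan is to reduce to the case $M\in\opg(\catw)$ and $N\in\opg(\catv)$ by Auslander--Buchweitz dimension shifts, and then identify the Tate cohomologies with ordinary Ext in the base case.

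By Definition~\ref{ab01} one has a $\catw\opg(\catw)$-hull $0\to M\to K'\to X'\to 0$ with $K'\in\finrescatw$ and $X'\in\opg(\catw)$, and dually a $\catv\opg(\catv)$-co-approximation $0\to Y'\to L\to N\to 0$ with $Y'\in\opg(\catv)$ and $L\in\fincorescatv$. Using the four perpendicularities together with the derived fact $\catw\perp\finrescat{G(W)}$ (induction on $\opg(\catw)$-pd using Fact~\ref{sgcsummary}), I verify that each of these short exact sequences is simultaneously $\ahom(\catw,-)$-exact and $\ahom(-,\catv)$-exact. Proposition~\ref{vanish01} provides the vanishings $\watate^n(K',N)=0$ and $\avtate^n(-,L)=0$ directly. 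The two cross-vanishings $\avtate^n(K',N)=0$ for $K'\in\finrescatw$ and $\watate^n(M,L)=0$ for $L\in\fincorescatv$ are the crux of this step: I prove them by induction on $\wpd(K')$ and $\vid(L)$ respectively, showing for instance that $\ahom(K',S)$ is exact against the totally $\catv$-acyclic component $S$ of a Tate $\catv$-coresolution of $N$, via $\catw\perp\opg(\catv)$ in the base case $K'\in\catw$ and the abelian vanishing $\aext^{\geq 1}(\catw,\catv)=0$ in the inductive step. Applying Lemmas~\ref{les01}\eqref{les01item1}, \ref{les01}\eqref{les01item2}, \ref{les02}\eqref{les02item1}, and~\ref{les02}\eqref{les02item2} to the hull and co-approximation then yields matching isomorphisms that transfer balance at $(M,N,n)$ to balance at $(X',Y',n+2)$.

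In the base case $X'\in\opg(\catw)$, $Y'\in\opg(\catv)$, and $m\geq 1$, Theorems~\ref{mixed01} and~\ref{mixed02} applied with $\xpd(X')=0=\yid(Y')$ give $\watate^m(X',Y')\cong\waext^m(X',Y')$ and $\avtate^m(X',Y')\cong\avext^m(X',Y')$. The perpendicularity $\catw\perp\opg(\catv)$ makes a proper $\catw$-resolution of $X'$ acyclic for $\hom(-,Y')$, hence $\waext^m(X',Y')\cong\aext^m(X',Y')$; dually $\opg(\catw)\perp\catv$ yields $\avext^m(X',Y')\cong\aext^m(X',Y')$. Both sides equal $\aext^m(X',Y')$, and taking $m=n+2\geq 3$ completes the proof of balance for $n\geq 1$.

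For the stronger conclusion when $\finrescatw=\fincorescatv$, the inclusions $\catw\subseteq\fincorescatv$ and $\catv\subseteq\finrescatw$ give by Proposition~\ref{vanish01} the enhanced vanishings $\avtate^*(W,-)=0$ for all $W\in\catw$ and $\watate^*(-,V)=0$ for all $V\in\catv$. The $\catw$-cogenerator sequence $0\to X'\to W_0\to X'^{(-1)}\to 0$ from Fact~\ref{sgcsummary} (with $W_0\in\catw$, $X'^{(-1)}\in\opg(\catw)$) and its dual for $Y'$ now produce simultaneous dimension shifts $\watate^n(X',-)\cong\watate^{n+1}(X'^{(-1)},-)$ and $\avtate^n(X',-)\cong\avtate^{n+1}(X'^{(-1)},-)$, keeping the arguments within $\opg(\catw)$ and $\opg(\catv)$. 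Iterating enough times moves any $n\in\mathbb{Z}$ into a positive degree where the base case applies. The main obstacle is pinning down the two cross-vanishings above, since without them the hull and co-approximation cannot be used to dimension-shift both Tate cohomologies in parallel; the inductive proofs depend on translating the relative vanishing hypothesis $\avext^{\geq 1}(\catw,\fincorescatv)=0$ (and its dual) into an abelian vanishing, which uses $\catw\perp\catv$ to show that bounded $\catv$-coresolutions compute abelian Ext against objects of $\catw$.
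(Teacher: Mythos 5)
Your proposal is correct in outline, but it is a genuinely different proof from the one in the paper. For $n\geq 1$ the paper does not dimension-shift at all: it produces explicit quasiisomorphisms $\ahom(\wti{T},N)\simeq\ahom(\wti{T},\shift^{-1}\wti{S})\cong\ahom(\shift\wti{T},\wti{S})\simeq\ahom(M,\wti{S})$, where $\wti{T}$ and $\wti{S}$ come from the degreewise split sequences of Lemmas~\ref{l01} and~\ref{l01'}; the relative-Ext hypothesis enters there through the quasiisomorphism criteria of \cite[(6.6)]{sather:gcac}, and the case of all $n$ is then handled by descending induction along a $\catw\opg(\catw)$-hull whose middle term lies in $\finrescatw=\fincorescatv$, so that both Tate theories vanish on it. You instead shift both arguments into $\opg(\catw)\times\opg(\catv)$ and identify both theories there with absolute Ext via Theorems~\ref{mixed01} and~\ref{mixed02} together with $\catw\perp\opg(\catv)$ and $\opg(\catw)\perp\catv$; the price is exactly your two cross-vanishings, which are in substance the absolute perpendicularities $\finrescatw\perp\catv$ and $\catw\perp\fincorescatv$ (the paper imports the stronger facts $\finrescat{G(W)}\perp\catv$ and $\catw\perp\fincorescat{G(V)}$ from \cite[(6.4)]{sather:gcac} in the first line of its proof), and your derivation of them from the stated relative vanishing, using that bounded proper (co)resolutions with terms orthogonal to the target compute absolute Ext, is sound. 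What the paper's route buys is that the isomorphism arises from explicit comparison maps of Hom-complexes, compatible with the sequences of Theorem~\ref{mixed01}; what yours buys is that it bypasses Lemmas~\ref{l01}--\ref{l01'} and uses only the long exact sequences of Lemmas~\ref{les01}--\ref{les02} plus vanishing, at the cost of an isomorphism assembled from several shifts with no naturality claim (none is asserted in the statement). Your use of $\finrescatw=\fincorescatv$, confined to vanishing for the middle terms of the final shifting sequences, parallels the paper's use of it for the middle term of its hull.

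Two points of wording rather than substance. In the inductive steps of the cross-vanishings the input you actually need is $\aext^{\geq1}(K',\catv)=0$ for $K'\in\finrescatw$ (respectively $\aext^{\geq1}(\catw,L)=0$ for $L\in\fincorescatv$), not merely $\aext^{\geq 1}(\catw,\catv)=0$ as stated mid-proof; your closing paragraph supplies the correct translation of $\waext^{\geq 1}(\finrescatw,\catv)=0=\avext^{\geq 1}(\catw,\fincorescatv)$ into these abelian vanishings, so this is a slip of phrasing, not a gap. Likewise, the $\ahom(-,\catv)$-exactness of the sequence $0\to Y'\to L\to N\to 0$ requires $\aext^1(N,\catv)=0$ for $N\in\fincorescat{G(V)}$; this follows from Fact~\ref{sgcsummary} applied to $\catv$ together with the dual Auslander--Buchweitz sequence from Definition~\ref{ab01}, and it deserves an explicit sentence alongside your verification of $\catw\perp\finrescat{G(W)}$.
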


\begin{proof}
We begin by noting that~\cite[(6.4)]{sather:gcac} implies that
$\catw\perp\fincorescat{G(V)}$ and $\finrescat{G(W)}\perp\catv$.
Theorem~\ref{cplt05}\eqref{cplt05item1} yields
a Tate $\catw$-resolution
$T\xra\alpha W\to M$ such that $\alpha_n$ is a split surjection for each $n\in\mathbb Z$.
Lemma~\ref{l01}\eqref{l01a} provides a degreewise split exact sequence of complexes
\begin{equation} \label{exact24}
0\to \shift^{-1}X\to \wti{T}\to W\to 0
\end{equation}
wherein
$X$ is a bounded strict $\catw\opg(\catw)$-resolution of $M$, 
$\wti{T}$ is exact,
$\wti{T}_n=0$ for each $n<-1$, $\wti{T}_{-1}$ is in $\opg(\catw)$,
$\wti{T}_n$ is in $\catw$ for each $n\geq 0$, and
$\wti{T}_{\geq 0}\cong T_{\geq 0}$.
In particular, 
there are isomorphisms for each $n\geq 1$
\begin{gather}
\watate^n(M,-)\cong \HH_{-n} (\ahom(\wti{T},-)). \label{iso03}
\end{gather}
Similarly, 
let 
$N\xra\delta V\xra\beta L$ be a Tate $\catv$-coresolution
such that each $\beta_n$ is a split monomorphism, and
consider a degreewise split exact sequence of complexes
\begin{equation} \label{exact25}
0\to V\to \wti{S}\xra{\eta} \shift Y\to 0
\end{equation}
wherein
$Y$ is a bounded strict $\opg(\catv)\catv$-coresolution,
$\wti{S}$ is exact,
$\wti{S}_n=0$ for each $n>1$, $\wti{S}_{1}$ is in $\opg(\catv)$,
$\wti{S}_n$ is in $\catv$ for each $n\leq 0$, and
$\wti{S}_{\leq 0}\cong S_{\leq 0}$.
In particular, there are isomorphisms 
for each $n\geq 1$
\begin{gather}
\awtate^n(-,N)\cong \HH_{-n} (\ahom(-,\wti{S})). \label{iso04}
\end{gather}

The proof will be complete in the case $n\geq 1$ once we verify the 
quasiisomorphisms in the following sequence wherein 
the isomorphism in the middle is standard
\begin{equation} \label{quiso01}
\ahom(\wti{T},N)\simeq\ahom(\wti{T},\shift^{-1}\wti{S})\cong\ahom(\shift \wti{T},\wti{S})\simeq\ahom(M,\wti{S}).
\end{equation}
Indeed, this provides the second isomorphism in the following
sequence 
$$\watate^{n}(M, N)
\cong\HH_{-n} (\ahom(\wti{T},N))
\cong \HH_{-n} (\ahom(M,\wti{S}))
\cong\avtate^{n}(M, N)$$
for each $n\geq 1$,
while the first and third isomorphisms are from~\eqref{iso03}
and~\eqref{iso04}.

We claim that the complex $\ahom(\wti{T},\shift^{-1}V)$ is exact.  To see this, 
note that the condition $\opg(\catw)\perp\catv$ implies that
$\aext^{\geq 1}(T_i',V_j)=0$ for all indices $i$ and $j$.
Since $\wti{T}$ is bounded below, a standard argument implies that
$\ahom(\wti{T},V_j)$ is exact for each index $j$, and similarly
it follows that $\ahom(\wti{T},V)$ is exact. We conclude that
$\ahom(\wti{T},\shift^{-1}V)\cong\shift^{-1}\ahom(\wti{T},V)$ is also exact, as claimed.

Now, apply $\ahom(\wti{T},\shift^{-1}(-))$ to the 
degreewise split exact sequence~\eqref{exact25} 
to obtain the next exact sequence
$$0\to\ahom(\wti{T},\shift^{-1}V)\to\ahom(\wti{T},\shift^{-1}\wti{S})\xra{\ahom(\wti{T},\shift^{-1}\eta)}\ahom(\wti{T},Y)\to 0.$$
The exactness of $\ahom(\wti{T},\shift^{-1}V)$ established above
shows that the morphism
$\ahom(\wti{T},\shift^{-1}\eta)$
is a quasiisomorphism.
From \cite[(6.6.b)]{sather:gcac} we know  that the 
first morphism in the following sequence is a quasiisomorphism
$$\ahom(\wti{T},N)\xra[\simeq]{\ahom(\wti{T},\delta)}\ahom(\wti{T},Y)
\xla[\simeq]{\ahom(\wti{T},\shift^{-1}\eta)}\ahom(\wti{T},\shift^{-1}\wti{S}).$$
Combined together, these
yield the first quasiisomorphism in~\eqref{quiso01};  the second one
is dual.
This completes the proof when $n\geq 1$.

For the remainder of the proof, assume that $n<1$ and 
that $\finrescatw=\fincorescatv$. 
Fix a $\catw\opg(\catw)$-hull
\begin{equation} \label{exact211} 
0\to M\to K \to M'\to 0
\end{equation}
that is, an exact sequence in $\cata$ 
with $K\in\finrescatw=\fincorescatv$ and $M'\in\opg(\catw)$;
see Definition~\ref{ab01}. 
We proceed by descending induction on $n$.
The base case $n\geq 1$ has already been established. 
Assuming that the desired isomorphisms hold with index $n+1$,
we have the second isomorphism in the next sequence
$$\waext^n(M,N)\cong\waext^{n+1}(M',N)
\cong\avext^{n+1}(M',N)\cong\avext^n(M,N).$$
The first isomorphism is from Lemma~\ref{dimshft03}\eqref{dimshft03item2},
and the third
isomorphism is from Lemma~\ref{dimshft04}\eqref{dimshft04item1}.
This completes the proof.
\end{proof}

\begin{para} \label{prfC}
\emph{Proof of Theorem~\ref{thmC}.}
We need to check that the categories 
$\catw=\catp_B(R)$ and
$\catv=\cati_{B^{\dagger}}(R)$
satisfy the hypotheses of Theorem~\ref{tatebalance01}.
We have 
$\catp_B(R)\perp\catp_B(R)$ and 
$\cati_{B^{\dagger}}(R)\perp\cati_{B^{\dagger}}(R)$;
see Fact~\ref{disc01}.
The conditions
$\opg(\catp_B(R))\perp\cati_{B^{\dagger}}(R)$
and
$\catp_B(R)\perp\opg(\cati_{B^{\dagger}}(R))$
are from~\cite[(6.16)]{sather:gcac}.
The fact that
$\catp_B(R)$ is closed under kernels of epimorphisms and direct summands, and
that $\cati_{B^{\dagger}}(R)$ is closed under cokernels of monomorphisms and direct summands
is in Fact~\ref{disc01}. 
We have 
$$\ext^{\geq 1}_{\catp_B}(\operatorname{res}\comp{\catp_B(R)},\cati_{B^{\dagger}}(R))
=0 =
\ext_{\cati_{B^{\dagger}}}^{\geq 1}(\catp_B(R),\operatorname{cores}\comp{\cati_{B^{\dagger}}(R)}).
$$
from~\cite[(6.15)]{sather:gcac}.
Finally, when $R$ is noetherian and $C$ is dualizing for $R$, we have
$\operatorname{res}\comp{\catp_B(R)}=
\operatorname{cores}\comp{\cati_{B^{\dagger}}(R)}$
by Lemma~\ref{lemold}.
\qed
\end{para}

\begin{cor} \label{Gbalance02'}
Let $R$ be a commutative ring, and let $M$ and $N$ be $R$-modules
such that $\gpd_R(M)<\infty$ and $\gid_R(N)<\infty$.
For each $n\geq 1$, we have
$$\pmtate^{n}(M, N)\cong\mitate^{n}(M, N).$$
When $R$ is Gorenstein, this isomorphism holds for all $n\in\mathbb Z$.
\end{cor}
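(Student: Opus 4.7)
The plan is to deduce this corollary as the special case $B=C=R$ of Theorem~\ref{thmC}. First, I would verify that this choice satisfies the theorem's hypotheses. The module $R$ is trivially semidualizing, and $R\in\catp(R)\subseteq\catgp(R)=\catgpc(R)$ since $R$ is projective; moreover $B^{\dagger}=\hom_R(R,R)\cong R$. Under the canonical isomorphisms $P\otimes_R R\cong P$ and $\hom_R(R,I)\cong I$, the relevant subcategories simplify to $\catp_B(R)=\catp(R)$ and $\cati_{B^{\dagger}}(R)=\cati(R)$, so that the Tate cohomology functors $\smash{\comp{\ext}}_{\catp_B}^{n}$ and $\smash{\comp{\ext}}_{\cati_{B^{\dagger}}}^{n}$ reduce to $\pmtate^{n}$ and $\mitate^{n}$, respectively.

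Next, I would translate the finiteness hypotheses. When $C=R$, the defining conditions for the Auslander and Bass classes collapse to trivial statements, so both $\catac(R)$ and $\catbc(R)$ coincide with $\catm(R)$. Consequently, Lemma~\ref{lem0701} (applied with $C=R$) identifies $\opg(\catp_B)\text{-}\pd_R(M)$ with $\gpd_R(M)$, and dually Lemma~\ref{lem0701'} identifies $\opg(\cati_{B^{\dagger}})\text{-}\id_R(N)$ with $\gid_R(N)$. Thus the standing assumptions $\gpd_R(M)<\infty$ and $\gid_R(N)<\infty$ translate exactly to the hypotheses of Theorem~\ref{thmC} in this situation, and the theorem yields $\pmtate^{n}(M,N)\cong\mitate^{n}(M,N)$ for each $n\geq 1$.

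For the Gorenstein case, I would invoke the fact that a Gorenstein ring is noetherian and (being Cohen-Macaulay) admits $R$ itself as a dualizing module. The second clause of Theorem~\ref{thmC}, with $C=R$ dualizing, then extends the isomorphism to all $n\in\mathbb{Z}$. No substantive obstacle arises in this argument; the work lies entirely in tracking how the $C=R$ specializations of Lemmas~\ref{lem0701} and~\ref{lem0701'} collapse the ``$\opg(-)$'' versions of the relevant dimensions onto the classical Gorenstein projective and injective dimensions.
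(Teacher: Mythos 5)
Your argument is correct, but it takes a different route from the paper. The paper proves this corollary by applying Theorem~\ref{tatebalance01} directly with $\catw=\catp(R)$ and $\catv=\cati(R)$: the relative Ext-vanishing hypothesis is checked via the classical balance $\pmext\cong\ext\cong\miext$, the closure and orthogonality conditions are standard, and the Gorenstein clause uses the classical identification $\finrescat{P}=\fincorescat{I}$ over a Gorenstein ring. You instead specialize Theorem~\ref{thmC} at $B=C=R$, which is legitimate: $R$ is semidualizing, $R\in\catp(R)\subseteq\catgp(R)$, $B^\dagger\cong R$, and $\catp_B(R)=\catp(R)$, $\cati_{B^\dagger}(R)=\cati(R)$; since $\catac(R)=\catbc(R)=\catm(R)$ when $C=R$, Lemmas~\ref{lem0701} and~\ref{lem0701'} collapse $\opg(\catpc)\text{-}\pd$ and $\opg(\catic)\text{-}\id$ to $\gpd_R$ and $\gid_R$, so the hypotheses of Theorem~\ref{thmC} are exactly your standing assumptions. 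What your route buys is economy: the verification of the hypotheses of Theorem~\ref{tatebalance01} is inherited from the proof of Theorem~\ref{thmC} (via the citations to~\cite{sather:gcac}) rather than redone, at the cost of the (easy) bookkeeping that identifies the $C=R$ specializations. The two routes also package the Gorenstein clause differently: the paper invokes $\finrescat{P}=\fincorescat{I}$ directly, while you feed ``$R$ is dualizing for itself'' into the second clause of Theorem~\ref{thmC}, which then deduces that same equality from Lemma~\ref{lemold}; in either formulation one is implicitly reading ``Gorenstein'' so that $\id_R(R)<\infty$ (e.g., finite Krull dimension), so your step here is no less rigorous than the paper's.
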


\begin{proof}
One readily checks that the categories 
$\catw=\catp(R)$ and
$\catv=\cati(R)$
satisfy the hypotheses of Theorem~\ref{tatebalance01}:
the relative Ext-vanishing follows from the balance
$\pmext\cong\ext\cong\miext$ on $\catm(R)\times\catm(R)$,
and the other hypotheses are standard.
\end{proof}

We conclude with two applications of 
Theorems~\ref{tatewhat01} and~\ref{tatebalance01}.

\begin{thm} \label{tatebalance02}
If $\catw$ and $\catv$ satisfy the hypotheses of
Theorem~\ref{tatebalance01},
then there are containments
$\finrescat{G(W)}\cap\fincorescatv\subseteq \finrescatw$ and
$\fincorescat{G(V)}\cap\finrescatw\subseteq \fincorescatv$.
\end{thm}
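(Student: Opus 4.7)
The plan is to deduce the first containment $\finrescat{G(W)}\cap\fincorescatv\subseteq\finrescatw$ by combining the balance of Theorem~\ref{tatebalance01} with Corollary~\ref{isos'}, and then to obtain the second containment by the formally dual argument, using Theorem~\ref{mixed02} and Corollary~\ref{isos'''} in place of Theorem~\ref{mixed01} and Corollary~\ref{isos'}.

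To set up the first containment, fix $M\in\finrescat{G(W)}\cap\fincorescatv$. The inclusion $\catv\subseteq\opg(\catv)$ gives $M\in\fincorescatv\subseteq\fincorescat{G(V)}$. For any $N\in\cpltrescatw$, which equals $\finrescat{G(W)}$ by Theorem~\ref{cplt05}\eqref{cplt05item1}, Theorem~\ref{tatebalance01} (applied with $N$ in the first slot and $M$ in the second) yields $\watate^{n}(N,M)\cong\avtate^{n}(N,M)$ for each $n\geq 1$. Since $\vid(M)<\infty$, Proposition~\ref{vanish01}\eqref{vanish01item3} forces $\avtate^{n}(-,M)=0$ for every $n$, so $\watate^{n}(N,M)=0$ for every $N\in\cpltrescatw$ and every $n\geq 1$.

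Next I would feed this vanishing into the long exact sequence of Theorem~\ref{mixed01} (with $\catx=\opg(\catw)$ and first argument varying over $\finrescat{G(W)}$): the vanishing of each $\watate^{n}(N,M)$ for $n\geq 1$ forces the comparison $\gwwacomp^{i}(N,M)\colon\gwaext^{i}(N,M)\to\waext^{i}(N,M)$ to be an isomorphism for every $i\geq 1$. Thus $\gwwacomp^{i}(-,M)$ is an isomorphism on $\cpltrescatw$ for any $i\geq\max\{1,\gwpd(M)\}$, so Corollary~\ref{isos'} delivers $\wpd(M)<\infty$, that is, $M\in\finrescatw$. The only moving part to double-check is that the balance hypotheses are in force for each pair $(N,M)$ with $N\in\cpltrescatw$; this is exactly the content of Theorem~\ref{cplt05}\eqref{cplt05item1} together with the placement of $M$ in $\fincorescat{G(V)}$, so there is no real obstacle.

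For the second containment $\fincorescat{G(V)}\cap\finrescatw\subseteq\fincorescatv$, I would run the formally dual procedure: begin with $N\in\fincorescat{G(V)}\cap\finrescatw$, observe that $\catw\subseteq\opg(\catw)$ yields $N\in\finrescat{G(W)}$, apply Proposition~\ref{vanish01}\eqref{vanish01item1} to kill $\watate^{n}(N,-)$ in view of $\wpd(N)<\infty$, transport the vanishing to $\avtate^{n}(N,L)$ for all $L\in\cpltcorescatv=\fincorescat{G(V)}$ and all $n\geq 1$ via Theorem~\ref{tatebalance01}, and close the argument by using the long exact sequence of Theorem~\ref{mixed02} (with $\caty=\opg(\catv)$) and Corollary~\ref{isos'''} to deduce $\vid(N)<\infty$.
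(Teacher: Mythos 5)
Your proof is correct, but it follows a genuinely different route from the paper's.

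The paper proceeds by a single dimension-shift: it takes a $\catw\opg(\catw)$-hull $0\to M\to K\to X\to 0$ (so $K\in\finrescatw$, $X\in\opg(\catw)$), uses Lemma~\ref{dimshft03}\eqref{dimshft03item2} to obtain $\watate^0(M,M)\cong\watate^1(X,M)$, applies Theorem~\ref{tatebalance01} exactly once (at the pair $(X,M)$ with $n=1\geq 1$), kills $\avtate^1(X,M)$ via the finiteness of $\vid(M)$, and then reads off $\wpd(M)<\infty$ from Theorem~\ref{tatewhat01}\eqref{tatewhat01item6}. Your version instead applies the balance isomorphism to the whole family of pairs $(N,M)$ with $N$ ranging over $\cpltrescatw$, gets $\watate^n(-,M)=0$ on $\cpltrescatw$ for every $n\geq 1$ from Proposition~\ref{vanish01}\eqref{vanish01item3}, and then detours through Theorem~\ref{mixed01} and Corollary~\ref{isos'} to recover finiteness of $\wpd(M)$. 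Both arguments are sound; the paper's is more economical. Note too that your detour is a bit longer than necessary: once you have $\watate^1(-,M)=0$ on $\cpltrescatw$, you can invoke Theorem~\ref{tatewhat01}\eqref{tatewhat01item3} directly, since that equivalence already says finiteness of $\wpd(M)$ follows from the vanishing of $\watate^n(-,M)$ for a single integer $n$. Corollary~\ref{isos'} is itself proved by feeding the comparison-map isomorphism back through the long exact sequence of Theorem~\ref{mixed01} and then applying Theorem~\ref{tatewhat01}, so you pass through the same ingredient twice.
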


\begin{proof}
We verify the first containment; the second one is verified dually.
Fix an object $M\in\finrescat{G(W)}\cap\fincorescatv$.
The object $M$ admits a
$\catw\catx$-hull
$$0\to M\to K\to X\to 0.$$
By assumption, we have $K\in \finrescatw$ and
$X\in\opg(\catw)$.  The condition 
$\catw\perp\opg(\catw)$ from Fact~\ref{sgcsummary}
shows that
$\catw\perp M$, so 
the displayed sequence is $\ahom(\catw,-)$-exact.
Lemma~\ref{dimshft03}\eqref{dimshft03item2} yields the first isomorphism
in the next sequence
$$\watate^0(M,M)\cong\watate^1(X,M)\cong\avtate^1(X,M)=0.$$
The second isomorphism is from Theorem~\ref{tatebalance01},
and the vanishing is from Theorem~\ref{tatewhat02}.
Hence, Theorem~\ref{tatewhat01} implies $\wpd(M)<\infty$,
as desired.
\end{proof}

From this we recover some of the main results of~\cite{sather:mfhdsdm}.

\begin{cor} \label{holm1}
Let $R$ be a commutative ring, and let $C$ be a semidualizing $R$-module.
Let $M$ be an $R$-module.
\begin{enumerate}[\quad\rm(a)]
\item \label{holm1item1}
If $\catgpc\text{-}\pd_R(M)<\infty$ and $\id_R(M)<\infty$, then $\catpc\text{-}\pd_R(M)<\infty$.
\item \label{holm1item2}
If $\gid_R(M)<\infty$ and $\catpc\text{-}\pd_R(M)<\infty$, then $\id_R(M)<\infty$.
\item \label{holm1item3}
If $\catgp\text{-}\pd_R(M)<\infty$ and $\catic\text{-}\id_R(M)<\infty$, then $\pd_R(M)<\infty$.
\item \label{holm1item4}
If $\catgic\text{-}\id_R(M)<\infty$ and $\pd_R(M)<\infty$, then $\catic\text{-}\id_R(M)<\infty$.
\end{enumerate}
\end{cor}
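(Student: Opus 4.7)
The plan is to derive all four parts from Theorem~\ref{tatebalance02} applied to two carefully chosen pairs $(\catw,\catv)$, using Lemmas~\ref{lem0701} and~\ref{lem0701'} to pass between the $\catgpc$-/$\catgic$-style dimensions appearing in the hypotheses and the $\opg(\catpc)$-/$\opg(\catic)$-style dimensions that appear in the containments of Theorem~\ref{tatebalance02}.

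For parts~\eqref{holm1item1} and~\eqref{holm1item2}, I would take $\catw=\catpc(R)$ and $\catv=\cati(R)$. The hypotheses of Theorem~\ref{tatebalance01} are verified exactly as in the proof of Theorem~\ref{thmC} (see~\eqref{prfC}) with $B=C$ and $B^{\dagger}=\Hom_R(C,C)\cong R$; the required condition $B\in\catgpc(R)$ is immediate from $C\in\catpc(R)\subseteq\catgpc(R)$. Theorem~\ref{tatebalance02} then delivers
$$
\finrescat{G(\catpc)}\cap\fincorescat{\cati}\subseteq\finrescat{\catpc}
\quad\text{and}\quad
\fincorescat{G(\cati)}\cap\finrescat{\catpc}\subseteq\fincorescat{\cati}.
$$
For~\eqref{holm1item1}, the hypothesis $\id_R(M)<\infty$ forces $M\in\catbc(R)$ by Fact~\ref{disc01}, so Lemma~\ref{lem0701} promotes $\catgpc\text{-}\pd_R(M)<\infty$ to $\opg(\catpc)\text{-}\pd_R(M)<\infty$; the first containment then gives $\catpc\text{-}\pd_R(M)<\infty$. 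For~\eqref{holm1item2}, since the Auslander class degenerates to all of $\catm(R)$ when the semidualizing module in Lemma~\ref{lem0701'} is taken to be $R$ itself, the hypothesis $\gid_R(M)<\infty$ is equivalent to $\opg(\cati)\text{-}\id_R(M)<\infty$, so the second containment yields $\id_R(M)<\infty$.

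For parts~\eqref{holm1item3} and~\eqref{holm1item4}, I would take $\catw=\catp(R)$ and $\catv=\cati_C(R)$. Granted the hypotheses of Theorem~\ref{tatebalance01} for this pair (which mirror the previous case, obtained either from Theorem~\ref{thmC} with $B=R$ and $B^{\dagger}=C$, or directly from the underlying $\perp$-results of~\cite{sather:gcac}), Theorem~\ref{tatebalance02} yields
$$
\finrescat{G(\catp)}\cap\fincorescat{\cati_C}\subseteq\finrescat{\catp}
\quad\text{and}\quad
\fincorescat{G(\cati_C)}\cap\finrescat{\catp}\subseteq\fincorescat{\cati_C}.
$$
Part~\eqref{holm1item3} is then immediate, as $\catgp(R)=\opg(\catp(R))$ by definition, so no translation of the hypothesis $\catgp\text{-}\pd_R(M)<\infty$ is required. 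For~\eqref{holm1item4}, $\pd_R(M)<\infty$ forces $M\in\catac(R)$ by Fact~\ref{disc01}, so Lemma~\ref{lem0701'} promotes $\catgic\text{-}\id_R(M)<\infty$ to $\opg(\catic)\text{-}\id_R(M)<\infty$, and the second containment yields $\cati_C\text{-}\id_R(M)<\infty$.

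The main point requiring care is the verification of the hypotheses of Theorem~\ref{tatebalance01} for the pair $(\catp(R),\cati_C(R))$ in parts~\eqref{holm1item3} and~\eqref{holm1item4}: a direct appeal to Theorem~\ref{thmC} with $B=R$ would require $R\in\catgpc(R)$, which is not automatic, so the cleanest route is to extract the relevant $\perp$-conditions and relative Ext vanishings directly from the underlying results of~\cite{sather:gcac}, paralleling the argument of~\eqref{prfC} but without invoking the hypothesis $B\in\catgpc(R)$.
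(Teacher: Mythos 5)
Your proof is correct and matches the paper's own argument: the paper writes out only part~\eqref{holm1item1} --- Fact~\ref{disc01} and Lemma~\ref{lem0701} to obtain $\opg(\catpc)\text{-}\pd_R(M)<\infty$, then Theorem~\ref{tatebalance02} with $\catw=\catpc(R)$ and $\catv=\cati(R)$ --- and dismisses the remaining parts as similar or easier, which is exactly what you spell out, using the pair $(\catp(R),\catic(R))$ for parts~\eqref{holm1item3} and~\eqref{holm1item4}. Your only unnecessary worry is whether $R\in\catgpc(R)$: this is in fact automatic, since applying $\Hom_R(-,C)$ to a resolution of $C$ by finite rank free modules yields a complete $\catp\catpc$-resolution of $R$, so the verification in~\eqref{prfC} may legitimately be invoked with $B=R$ as well.
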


\begin{proof}
We prove part~\eqref{holm1item1}; the other parts are similar or easier.
Assume that $\catgpc\text{-}\pd_R(M)<\infty$ and $\id_R(M)<\infty$.
The finiteness of $\id_R(M)$ implies that $M\in\catbc(R)$, by Fact~\ref{disc01}.
Hence, the condition $\catgpc\text{-}\pd_R(M)<\infty$ 
works with Lemma~\ref{lem0701} to imply that
$\opg(\catpc)\text{-}\pd_R(M)<\infty$. 
Now apply Theorem~\ref{tatebalance02}
with $\catw=\catpc(R)$ and $\catv=\cati(R)$
to conclude that
$\catpc\text{-}\pd_R(M)<\infty$.
\end{proof}

\section*{Acknowledgments}

We are grateful to  R.~Takahashi and D.~White for letting us  include
Lemma~\ref{lemold}.

\providecommand{\bysame}{\leavevmode\hbox to3em{\hrulefill}\thinspace}
\providecommand{\MR}{\relax\ifhmode\unskip\space\fi MR }
\providecommand{\MRhref}[2]{%
  \href{http://www.ams.org/mathscinet-getitem?mr=#1}{#2}
}
\providecommand{\href}[2]{#2}

\end{document}